\title{Labeled Chip-Firing on Undirected $k$-ary Trees}
\author{Ryota Inagaki \and Aaron Lin}
\date{}
\newtheorem{theorem}{Theorem}[section]
\newtheorem{corollary}[theorem]{Corollary}
\newtheorem{lemma}[theorem]{Lemma}
\newtheorem{prop}[theorem]{Proposition}
\newtheorem{conjecture}[theorem]{Conjecture}
\theoremstyle{remark}
\newtheorem*{remark*}{Remark}
\theoremstyle{definition}
\newtheorem{definition}[theorem]{Definition}
\newtheorem{example}[theorem]{Example}
\begin{document}

\maketitle

\begin{abstract}
We explore labeled chip-firing on undirected $k$-ary trees, trees where every vertex has degree $k+1$. First, we extend known results for binary trees from Musiker and Nguyen, including the endgame and the locations of the smallest and largest chips, as well as relations between chips at different vertices. Then, inspired by recent work on the binary tree by the first author, Khovanova, and Luo, we use these properties to construct an upper bound, which we call the zigzag bound, on the number of stable configurations in labeled chip-firing on $k$-ary trees with $\frac{k^{\ell}-1}{k-1}$ labeled chips starting at the root. We further provide a novel lower bound on the number of stable configurations of $k$-ary trees, complementing our upper bounds.
\end{abstract}

\renewcommand{\thefootnote}{\fnsymbol{footnote}} 

\footnotetext{\emph{2020 Mathematics Subject Classification}:  05C57, 05C63, 05A15}

\footnotetext{\emph{Key words and phrases: } Labeled Chip-Firing, Undirected Trees, Vertex-firings} 

\section{Introduction}

Chip-firing is a combinatorial process that acts as a discrete approximation of a dynamic system. It was first proposed by Spencer, who described a game in which chips are taken from a pile and split as evenly as possible among neighboring piles \cite{spencer1986balancing}. Later, classical chip-firing was defined in \cite{anderson1989disks} and \cite{bjorner1991chip}, as a system in which one takes chips from a pile and \textit{fires} one chip to each neighboring pile. Numerous variants, such as the Abelian Sandpile (see \cite{bak1987self, dhar1990self,dhar1999abelian}) and the chip-firing game on invertible matrices \cite{zbMATH06585703}, and algebraic aspects of the game, such as the critical group \cite{MR3144399}, have been studied. When the chip-firing game is done with \textit{distinguishable}, or labeled chips, numerous basic properties from the unlabeled version of the game no longer hold, motivating a distinct area of study.

\subsection{Unlabeled Chip-Firing}
Chip-firing has classically been studied in the unlabeled case. Specifically, unlabeled chip-firing is a variant of chip-firing in which indistinguishable chips are placed on the vertices of a graph. We define a \textit{configuration} as a distribution of chips over the vertices of a graph $G = (V, E)$; it can be represented as a vector $\mathcal{C} \in \mathbb{Z}_{\geq 0}^{|V|}$ where the entry corresponding to vertex $m$ is the number of chips at vertex $v_m$. If a vertex has as at least as many chips as neighbors, it can fire and give a chip to each of its neighbors. This process continues, firing as many vertices as possible. If no vertex can fire, then the process has terminated and we say that it has reached a \textit{stable configuration}, which is a configuration of chips where no vertex can fire. 

\begin{example}

Consider the binary tree below in Figure \ref{fig:perfectchips} where the top vertex is a neighbor of itself. Initially, 7 chips are placed at the root. After the root fires three times, the root has 1 chip, and each of its children has 3 chips. The children of the root will then each fire once, resulting in 3 chips at the root and 1 chip at each vertex in layer 3. Lastly, the root fires once, reaching a stable configuration with 1 chip on each vertex in layers 1 through 3.

\begin{figure}[H]
    \centering
    \includegraphics[width=0.9\linewidth]{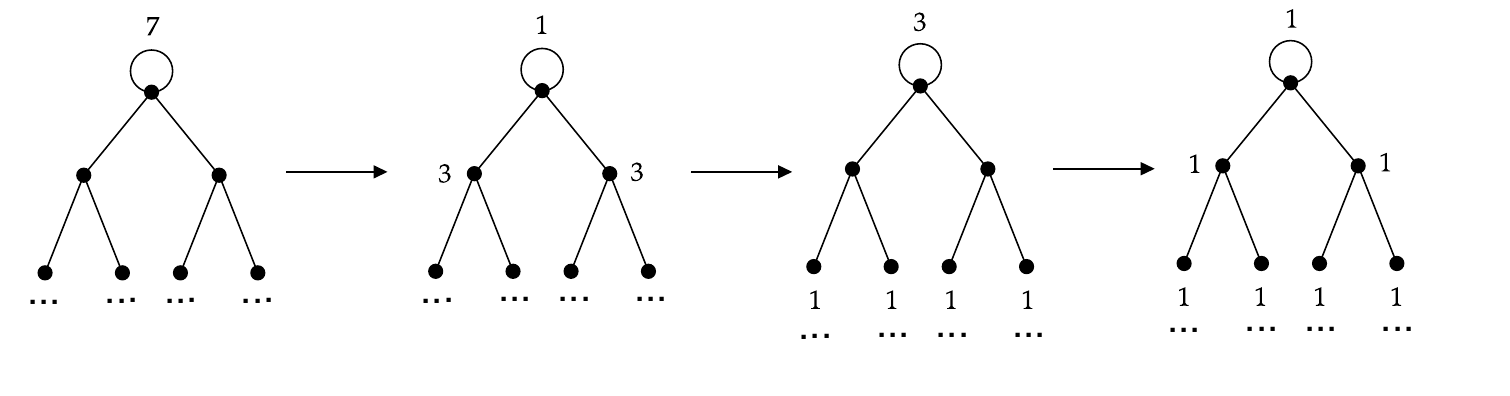}
    \caption{An example unlabeled chip-firing process on a binary tree starting with 7 chips at the root.}
    \label{fig:perfectchips}
\end{figure}

\end{example}

 One important property of unlabeled chip-firing is a commutativity property known as \textit{confluence}, which we formally state below.
\begin{theorem}[Theorem 2.2.2, \cite{klivans2018mathematics}]\label{thm:Confluence}

Consider configuration $\mathcal{C}$ and stable configuration $\mathcal{C}'$. If $\mathcal{C}'$ can be reached from $\mathcal{C}$ after finitely many legal firings, then $\mathcal{C}'$ is the unique stable configuration that is obtained from $\mathcal{C}$.
\end{theorem}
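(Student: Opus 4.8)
The plan is to reduce the global uniqueness statement to a purely local commutativity property of single firings, and then to run an exchange (``bubbling'') argument together with induction on the length of a firing sequence. Throughout, I record a firing by the vertex fired, so a legal firing sequence is a word $\sigma = (w_1, \dots, w_m)$ in the vertices such that each prefix is legal, meaning the fired vertex has at least as many chips as neighbors at the moment it fires. I write $\mathcal{C}_\sigma$ for the configuration obtained from $\mathcal{C}$ after performing $\sigma$. The theorem is the statement that any legal sequence from $\mathcal{C}$ that reaches a stable configuration reaches the same one as the given sequence producing $\mathcal{C}'$.

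First I would establish the \emph{local diamond}: if two distinct vertices $u$ and $v$ are both fireable from a configuration $\mathcal{D}$, then $v$ remains fireable after firing $u$, the vertex $u$ remains fireable after firing $v$, and firing the two in either order yields the same configuration. The key point is that firing a vertex only adds chips to the other vertices or leaves them unchanged, so firing $u$ cannot disable $v$; and since firing a vertex $w$ amounts to subtracting a fixed vector (the Laplacian column at $w$) from the configuration, the two orders produce identical results by commutativity of vector addition. Alongside this I would record the monotonicity observation that if a vertex $w$ is fireable from $\mathcal{C}$ and $\sigma$ is a legal sequence in which $w$ is never fired, then $w$ is still fireable at $\mathcal{C}_\sigma$, since the chip count at an unfired vertex never decreases under firings of other vertices while its degree is fixed. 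In particular, if $\sigma$ reaches a stable configuration, every vertex fireable from $\mathcal{C}$ must appear in $\sigma$.

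The heart of the argument is the following claim, proved by strong induction on $|\sigma|$: any two legal sequences $\sigma$ and $\tau$ from $\mathcal{C}$ that each reach a stable configuration fire every vertex the same number of times, and hence reach the same stable configuration. For the inductive step, let $w = w_1$ be the first firing of $\sigma$. By the monotonicity observation $w$ appears in $\tau$; using the local diamond repeatedly I would commute the first occurrence of $w$ in $\tau$ to the front, producing a legal sequence $\tau'$ that starts with $w$, reaches the same stable configuration as $\tau$, and fires each vertex equally often. Firing $w$ once from $\mathcal{C}$ gives a configuration from which the tails of $\sigma$ and $\tau'$ are legal sequences reaching stable configurations, the tail of $\sigma$ being strictly shorter; the induction hypothesis then settles the firing counts and the equality of terminal configurations. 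Applying the claim with the given $\sigma$ (which reaches $\mathcal{C}'$) against an arbitrary legal sequence reaching a stable configuration yields the theorem.

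I expect the bubbling step to be the main technical obstacle: I must verify that each adjacent swap used to move $w$ forward keeps the entire remaining word legal. This is exactly where the local diamond is needed in its full strength — not merely that the two orders reach the same configuration, but that both intermediate configurations are legal — so that after a swap the suffix of the sequence sees the same configuration it saw before and therefore remains legal.
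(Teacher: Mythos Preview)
The paper does not prove this statement at all: Theorem~\ref{thm:Confluence} is quoted verbatim from Klivans' book and used as background, with no argument given. So there is no ``paper's own proof'' to compare against.

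That said, your proposal is a correct and standard proof of unlabeled confluence. The local diamond is exactly right (firing $u$ can only add chips to $v\neq u$, and the two firings commute because each is subtraction of a fixed Laplacian column), the monotonicity observation is the right tool for guaranteeing that $w$ occurs in $\tau$, and your bubbling argument is sound: at the moment you swap $\tau_i$ with the first occurrence of $w$ at position $i+1$, the vertex $w$ is fireable after $\tau_1,\dots,\tau_{i-1}$ precisely by monotonicity (since $w$ has not yet been fired), so the diamond applies and the suffix sees the same configuration. Your strong induction on $|\sigma|$ is fine as stated, with the base case $|\sigma|=0$ forcing $\mathcal{C}$ stable and hence $|\tau|=0$. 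This is essentially the argument one finds in the chip-firing literature (and presumably in the cited source), so you have reconstructed the expected proof.
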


\subsection{Labeled-chip firing}

In this paper, we study a variant of the above game titled \textit{labeled chip-firing}, where the chips are numerically labeled $1, 2, \ldots, N$, and a specific firing mechanism is used to determine the direction in which chips are fired. This variant of chip-firing was first studied by Hopkins, McConville, and Propp in \cite{Hopkins_2017} in the context of the one-dimensional lattice. We specifically focus on labeled chip-firing on undirected $k$-ary trees.

Here, we assume that the root vertex has a self-loop and every vertex has $k$ children. In this setting, when a vertex $v$ has at least $k+1$ chips, it can fire, i.e., select a set of $k+1$ chips on it and send one to each of its neighbors. The $\lceil \frac{k+1}{2} \rceil$th-smallest of the selected chips will be sent to the parent, and of the remaining chips, the $i$th smallest chip gets sent to the $i$th left child. 

\begin{example}
    Consider the undirected binary tree starting with $7$ chips at the root. Figure~\ref{fig:SingleFiringProcess} illustrates one possible procedure that stabilizes the tree. First, we fire chips $5, 6, 7$ from the root. Then we fire $3, 4, 6$ from the root. Then we fire $1, 2$, and $4$. Afterwards, we fire $1, 3, 5$ from the left child of the root, and we fire $4, 6, 7$ from the right child of the root. Finally, we fire chips $2, 3, 6$ from the root. We are left with one chip on each vertex in the first three layers of the tree. Since no vertex has at least $3$ chips, we have reached a stable configuration.

    \begin{figure}[H]
    \centering
\includegraphics[width=0.75\linewidth]{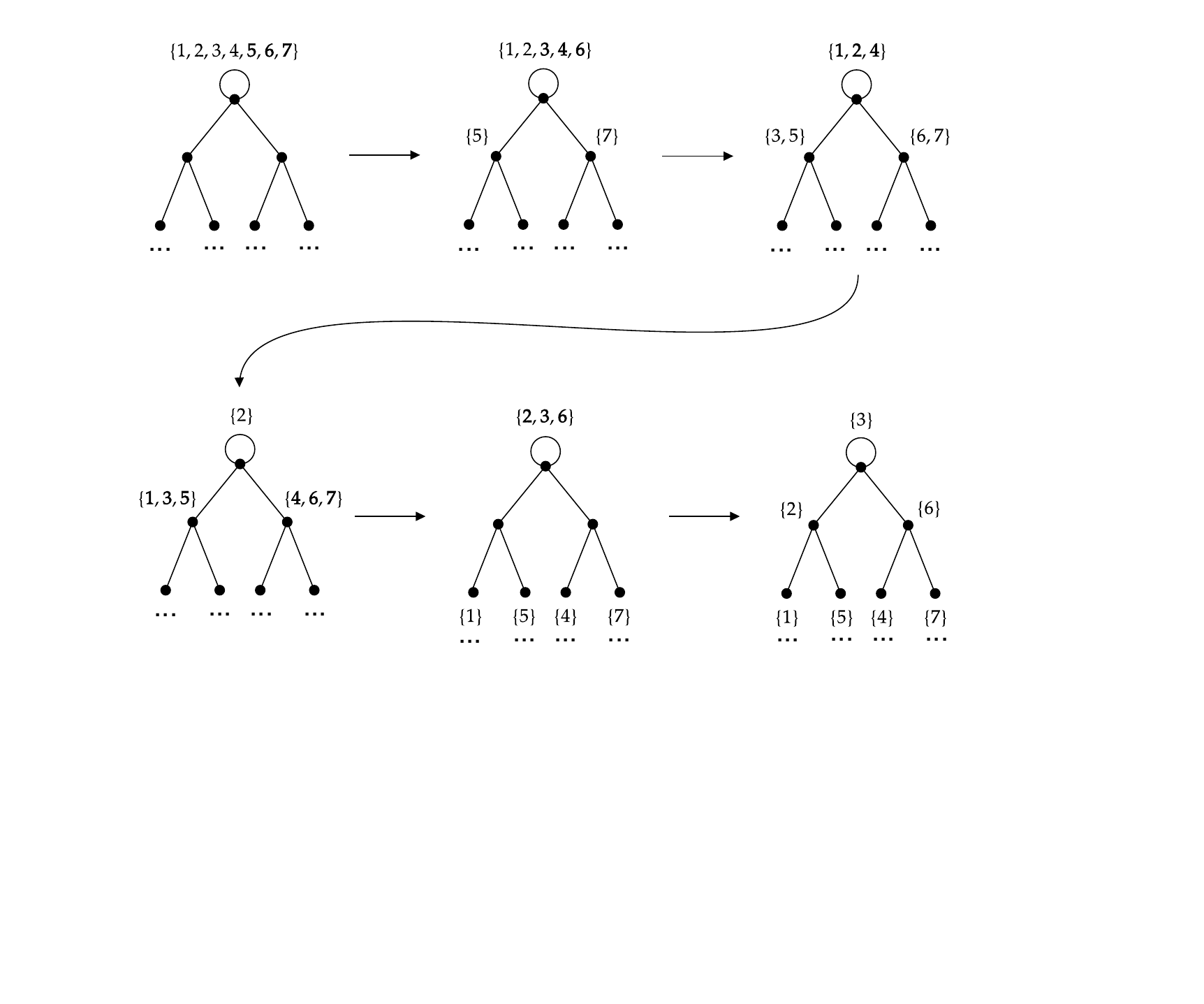}
    \caption{A labeled chip-firing process on a binary tree with 7 chips at the root. At each step, we bold the chips that get fired from the vertex.}
    \label{fig:SingleFiringProcess}
\end{figure}
    
\end{example}

One key property of labeled chip-firing on $k$-ary trees is that, unlike in unlabeled chip-firing, confluence does not always hold due to the distinguishable chips and firing mechanism; see Example~\ref{ex:nonconfluence}. Because of this, there are multiple possible stable configurations, even when we start with the same chips at the root of the tree. This lack of global confluence motivates research on labeled chip-firing.

\begin{example}\label{ex:nonconfluence}
Figure \ref{fig:labeledexample} shows an example of confluence not holding for a binary tree starting with chips $1,2, \dots 2^3-1$. The figure shows two distinct stable configurations can be reached from the same initial configuration, demonstrating that confluence does not hold.
\begin{figure}[H]
    \centering
    \includegraphics[width=0.8\linewidth]{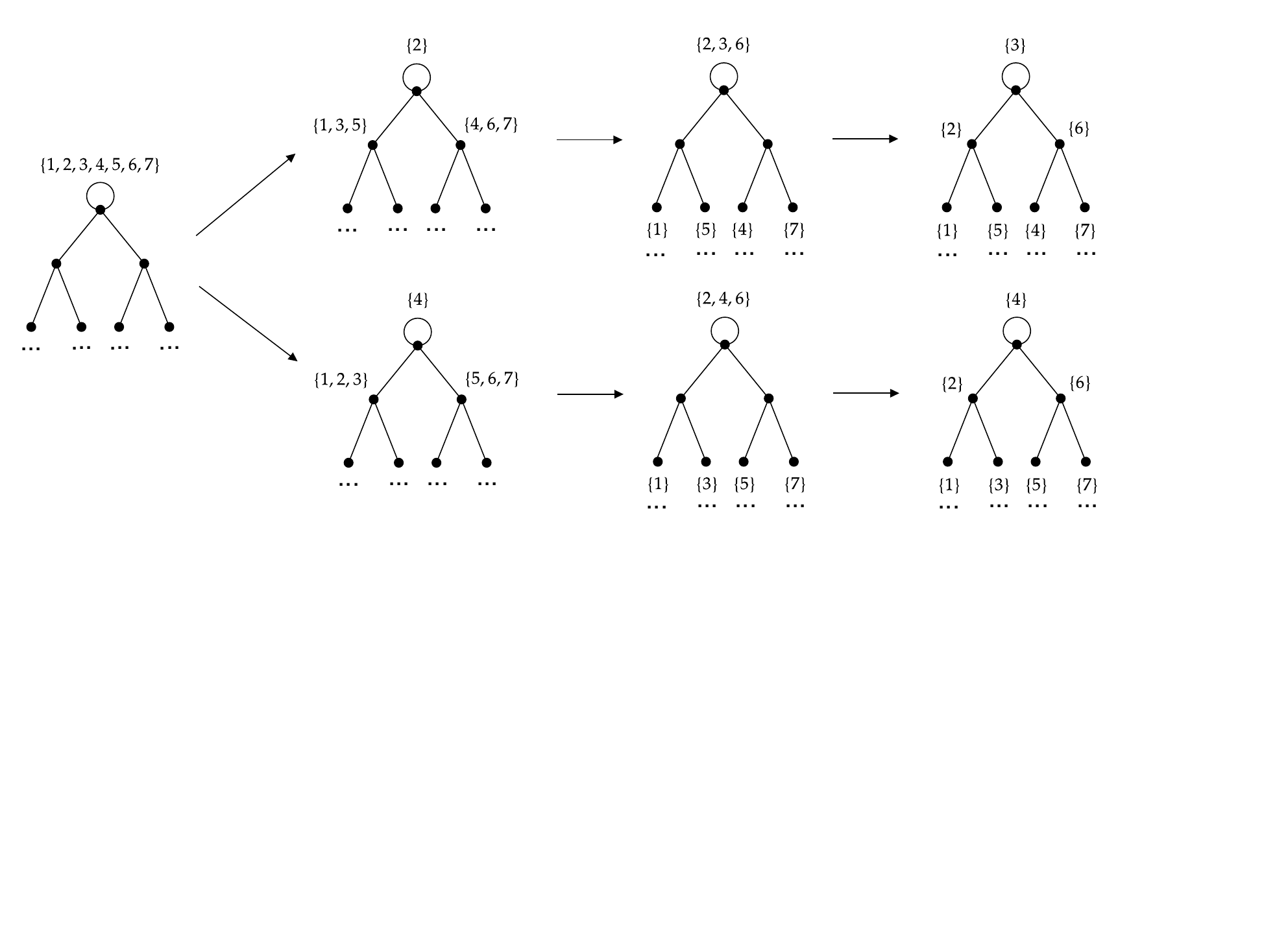}
    \caption{Two possible labeled chip-firing processes starting from the same stable configuration and ending with different stable configurations. This demonstrates that confluence, as in Theorem~\ref{thm:Confluence}, does not always hold in the labeled chip-firing game on undirected $k$-ary trees.}
    \label{fig:labeledexample}
\end{figure}
\end{example}

\subsection{Motivations and Roadmap}
In \cite{MR4827886}, Musiker and Nguyen study the labeled chip-firing game on the undirected binary tree starting with $2^{\ell}-1$ chips at the root, i.e., the $k=2$ case, and describe relations between chips that hold for all possible stable configurations. At the end of their paper \cite{MR4827886}, Musiker and Nguyen ask the following:
\begin{itemize}
    \item What are the attainable stable configurations?
    \item How many are there?
\end{itemize} In \cite{inagaki2024chipfiringundirectedbinarytrees}, the first author, Khovanova, and Luo partially answered the second question in the context of binary trees by proving an upper bound on the number of stable configurations.

In this paper, we answer the second question, but in the more general setting of $k$-ary trees starting with $\frac{k^{\ell}-1}{k-1}$ chips at the root. We begin in Section~\ref{sec:priorResults} by providing formal notations used to describe vertices of $k$-ary trees and describing prior results about unlabeled and labeled chip-firing on binary trees. Next, in Section~\ref{sec:UpperBounds}, we prove an upper bound on the number of stable configurations in the $k$-ary tree; we do this by generalizing the method used by the first author, Khovanova, and Luo \cite{inagaki2024chipfiringundirectedbinarytrees}. In Section~\ref{sec:LowerBounds}, we provide a lower bound on the number of stable configurations. We conclude the paper with conjectures and further questions in Section~\ref{sec:Further}.
\section{Preliminaries and Prior Results}\label{sec:priorResults}

\subsection{Definitions and Notations}

We begin with fundamental definitions about trees and then expand into chip-firing on trees.

A \textit{tree} is an acyclic, undirected graph. A \textit{rooted tree} is a tree with a designated vertex as the root. For a vertex $u$, the unique vertex $v$ such that $u$ and $v$ are directly connected by an edge and $v$ is closer to the root than $u$ is the \textit{parent} of $u$. Since multiple vertices can have the same parent, we denote the collection of vertices that have a vertex $v$ as its parent as the \textit{children} of $v$. In \textit{infinite looped $k$-ary tree} is an infinite tree such that every non-root vertex has $k$ children, and the root has a self-loop. This ensures that each vertex has degree $k+1$. 

For the rest of this paper, assume all trees are infinite. Furthermore, assume that all $k$-ary trees have a self-loop at the root. 

Next, we describe how we denote each vertex. We utilize a breadth-first index on the vertices. We begin by indexing the root as vertex $v_0$ and for each $i$, we label the $j$th leftmost child of $v_i$ as $v_{ki+j}$. We illustrate this labeling in Figure \ref{fig:k-ary_vertex_label}.

\begin{figure}[H]
    \centering
    \includegraphics[width=0.65\linewidth]{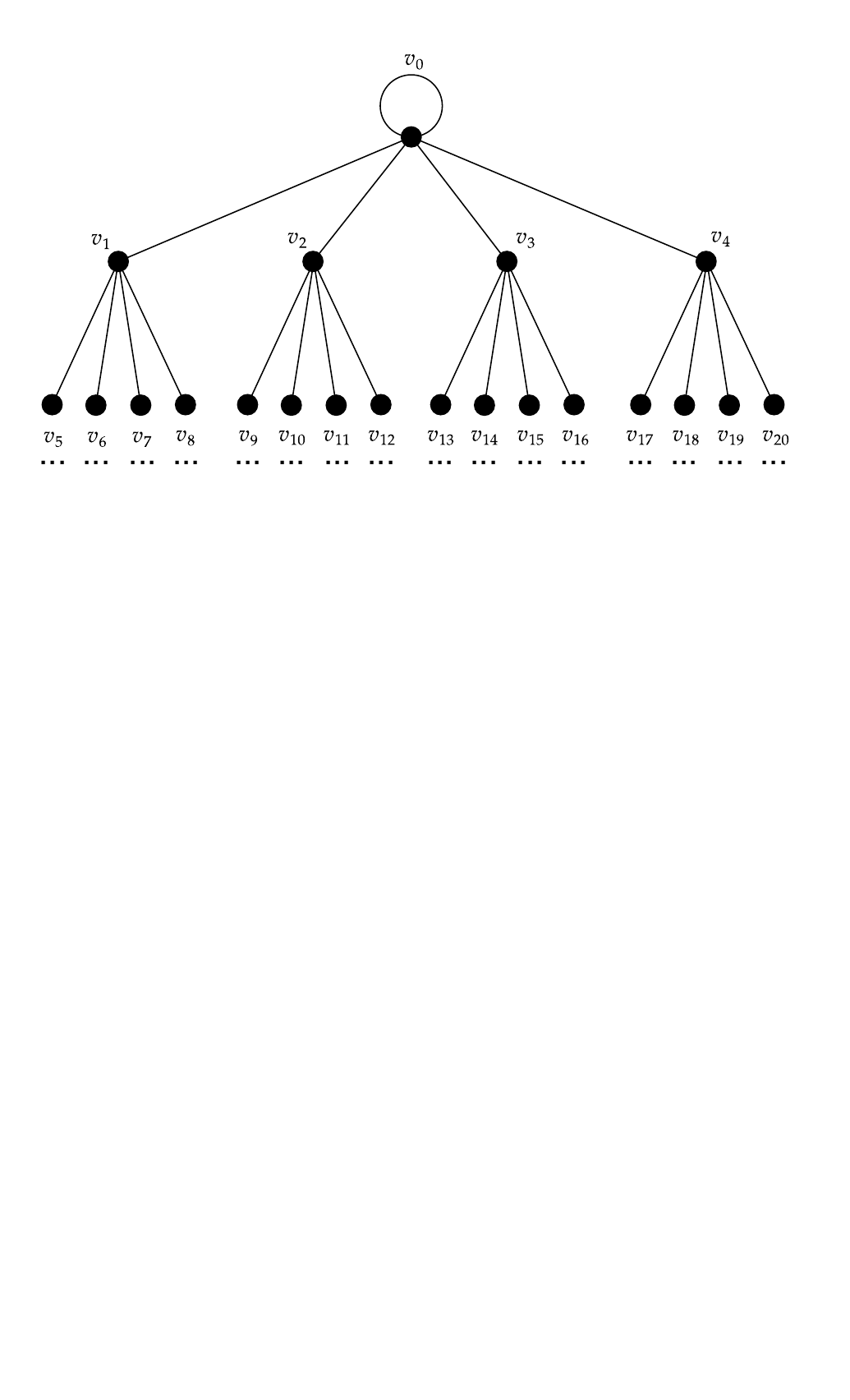}
    \caption{Vertex labelings for the first three layers of a 4-ary tree.}
    \label{fig:k-ary_vertex_label}
\end{figure}

For index $i$ and $a \in \{1, 2, \dots, k\}$, we say that $v_{ki+a}$ is a $\textit{left child}$ of $v_i$ if $a \in \{1, 2, \dots, \lfloor \frac{k}{2} \rfloor\}$.  Similarly, for each index $i$ and $a \in \{1, 2, \dots, k\}$, the vertex $v_{ki+a}$ is a \emph{right child} of $v_i$ if $a \in \{\lfloor \frac{k}{2} \rfloor+1, \lfloor \frac{k}{2}\rfloor+2, \dots, k\}$.

In this paper, we assume that our initial configuration of chips consists of labeled chips $1, 2, \dots, \frac{k^{\ell}-1}{k-1}$ on the root vertex $v_0$. We say that a vertex $v$ of this $k$-ary tree can \emph{fire}, i.e., choose $k+1$ chips and distribute a chip to each child and parent as follows: send the  $\lceil \frac{k+1}{2} \rceil$th-smallest of the selected chips to the parent and, from of the $k$ remaining chips, the $i$th smallest chip gets sent to the $i$th leftmost child. A \emph{stable configuration} is a configuration of chips where no vertex in the $k$-ary tree can fire.

We now introduce terminology used to describe relationships between vertices in our $k$-ary tree. We say that vertex $u$ is a \textit{straight left descendant} of vertex $v$ if $u$ can be reached by traveling only along the leftmost children starting at $v$. Similarly, $u$ is a \textit{straight right descendant} of vertex $v$ if $u$ can be reached by traveling only along the rightmost children starting at $v$.

\subsection{Prior Results}

The property of confluence tells us that whether a configuration stabilizes or not only depends on the initial chip configuration, not the order of firings. Next, we describe some known results about unlabeled chip-firing on trees. We begin with the number of chips on a vertex in each layer in the stable configuration of a $k$-ary tree.

\begin{theorem}[Agrawal et al. \cite{agrawal2025chipfiringinfinitekarytrees}]\label{endingconfig}

If we start with $N$ chips at the root, where $\frac{k^\ell-1}{k-1} \le N < \frac{k^{\ell+1}-1}{k-1}$ for some integer $\ell$, then for $0 \le i \le \ell-1$, the resulting stable configuration has $a_i+1$ chips on each vertex on layer $i+1$, where $a_{\ell-1} \ldots a_2 a_1 a_0$ is the base-$k$ expansion of $N-\frac{k^\ell-1}{k-1}$.
\end{theorem}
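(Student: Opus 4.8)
The plan is to convert the statement into a pure reachability question using confluence (Theorem~\ref{thm:Confluence}). Since that theorem makes the stable configuration unique, it suffices to name the candidate configuration $C^*$ — for each $0 \le i \le \ell-1$, place $a_i + 1$ chips on every vertex of layer $i+1$, and place no chips below layer $\ell$ — verify that $C^*$ is stable, and then exhibit a single legal firing sequence from the initial configuration (all $N$ chips at the root) to $C^*$. Stability is immediate: each base-$k$ digit satisfies $a_i \le k-1$, so every occupied vertex holds at most $k$ chips and cannot fire. That $C^*$ contains exactly $N$ chips follows from a direct count: layer $i+1$ has $k^i$ vertices, so the total is $\sum_{i=0}^{\ell-1} k^i(a_i+1) = (N-\tfrac{k^\ell-1}{k-1}) + \tfrac{k^\ell-1}{k-1} = N$, using $\sum_{i=0}^{\ell-1}k^i = \tfrac{k^\ell-1}{k-1}$ and the definition of the digits $a_i$.

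I would then build the firing sequence by induction on $\ell$. The base case $\ell=1$ forces $1 \le N \le k$, so the root already carries at most $k$ chips, the configuration is stable, and the root holds $a_0+1 = N$ chips as required. For the inductive step, I would first fire the root $q_1 := (N - a_0 - 1)/k$ times; this is a legal block, and it leaves $a_0+1$ chips on the root. Here one checks that $\tfrac{k^\ell-1}{k-1} \equiv 1 \pmod k$, whence $N-1 \equiv a_0 \pmod k$ and $q_1$ is a nonnegative integer, while $a_0+1$ is precisely the layer-$1$ count prescribed by the theorem. Each root firing sends one chip down each root edge, so afterward every child carries $q_1$ chips. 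A short computation rewrites $q_1 = \tfrac{k^{\ell-1}-1}{k-1} + \lfloor M/k\rfloor$ with $M = N - \tfrac{k^\ell-1}{k-1}$; since $0 \le \lfloor M/k\rfloor < k^{\ell-1}$, the value $q_1$ lies in the admissible range $[\tfrac{k^{\ell-1}-1}{k-1}, \tfrac{k^\ell-1}{k-1})$ for an $(\ell-1)$-instance, and its base-$k$ digits are $a_1, \ldots, a_{\ell-1}$. Thus the induction hypothesis applied to each child's subtree predicts exactly the counts $a_j+1$ on the original layer $j+1$.

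The main obstacle is that a child of the root is \emph{not} a looped root: whenever a child fires it returns one chip to the root, so the subtrees are not isolated copies of the $(\ell-1)$-problem, and the root can become fireable again after the children act. Resolving this is the crux. I would isolate it as a boundary lemma stating that the stable configuration of a subtree depends only on the net number of chips crossing its top edge (downward minus upward), so that the repeated up-and-down traffic across each root edge may be replaced by the single net input $q_1$. In odometer language, letting $u^{(j)}$ be the common number of firings of a layer-$j$ vertex, the cut between layers $j$ and $j+1$ gives $k^{j}(u^{(j)} - u^{(j+1)}) = \sum_{i>j} k^{i-1} c_i$, and one verifies that the layer-balance equations $c_j = u^{(j-1)} + k\,u^{(j+1)} - (k+1)u^{(j)}$ for $2 \le j \le \ell$ hold automatically once $u$ is defined from these cut relations. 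What does not come for free, and where I expect the real care to be needed, is realizability: one must show the back-flow is always reabsorbed by exactly $u^{(2)}$ extra root firings (so the root fires $u^{(1)} = q_1 + u^{(2)}$ times in all) without ever invoking an illegal firing. Once such a legal order is produced, confluence guarantees that the configuration reached is the unique stable one, which is $C^*$, completing the argument.
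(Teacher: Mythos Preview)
The paper does not prove this statement. Theorem~\ref{endingconfig} is quoted in Section~\ref{sec:priorResults} as a prior result of Agrawal et al.\ \cite{agrawal2025chipfiringinfinitekarytrees} and is used as background; no proof appears here. So there is nothing in the paper to compare your argument against.

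As to your sketch itself: the overall architecture (use confluence to reduce to exhibiting one legal firing sequence reaching the explicit candidate $C^*$, then induct on $\ell$) is sound, and your checks that $C^*$ is stable and has total mass $N$ are correct. You have also correctly identified the only genuine difficulty and been honest that you have not resolved it: the subtree under a child of the root is \emph{not} a copy of the looped problem, so the inductive hypothesis does not apply directly, and the chips returned to the root must be re-fired in a way that keeps every firing legal. Your odometer/cut-balance discussion shows the arithmetic closes, but as you say, that does not by itself produce a legal ordering. Until you either (i) exhibit an explicit interleaving of root and subtree firings that is always legal, or (ii) prove a general ``boundary lemma'' of the type you describe, the proof is incomplete. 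One clean way to close the gap is to argue layer by layer in ``waves'' (fire all of layer~1, then all of layer~2, etc., then repeat), tracking the exact chip count on each layer after each pass; this makes legality checkable at every step and avoids appealing to an abstract net-flux principle.
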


The above theorem implies that if we start with exactly $\frac{k^\ell-1}{k-1}$ chips at the root of a $k$-ary tree, then the stable configuration will have exactly 1 chip on each vertex in layers 1 through $\ell$. Because of this nice stable configuration, we choose to study stable configurations of $k$-ary trees starting with $\frac{k^\ell-1}{k-1}$ labeled chips. We use $N_{k, \ell}$ to denote \[N_{k, \ell}:= \frac{k^\ell-1}{k-1}\] for the sake of abbreviation. The above theorem is exemplified in Figure \ref{fig:perfectchips}, where each vertex in layers 1 through 3 ends with exactly 1 chip in the stable configuration.


We now provide definitions that we shall use to first state the results found in \cite{MR4827886} about labeled chip-firing on binary trees and then extend them to the setting of undirected $k$-ary trees.

\begin{definition}
Consider a stable configuration of $\frac{k^n-1}{k-1}$ chips on a $k$-ary tree. The \textit{bottom straight left descendant} and \textit{bottom straight right descendant} are the straight left and right descendants at the lowest layer such that they have at least 1 chip in the stable configuration, respectively. The \textit{top straight ancestor} of a vertex $v$ is the vertex closest to the root such that $v$ is a straight left or right descendant of its top straight ancestor.
\end{definition}

\begin{example}
    Consider the stable configuration of the undirected binary tree illustrated in Figure~\ref{fig:SingleFiringProcess}. The bottom straight left descendant of the root (the vertex with chip $3$) is the vertex with chip $1$. The bottom straight right descendant of the root is the vertex with chip $7$. 
\end{example}

\begin{example}
    Consider the stable configuration of the undirected $4$-ary tree illustrated in Figure~\ref{fig:descendant_example}.We obtain the first configuration by firing $(1, 5, 9, 14, 18)$, $(2, 6, 10, 15, 19)$, $(3, 7, 11, 16, 20)$, $(4, 8, 12, 17, 21)$, $(9, 10, 11, 12, 13)$, $(1, 2, 3, 4, 9)$, $(5, 6, 7, 8, 10)$, $(12, 14, 15, 16, 17)$, and $(13, 18, 19, 20, 21)$. The bottom straight left descendant of the root (the vertex with chip $11$) is the vertex with chip $1$. The bottom straight right descendant of the root is the vertex with chip $21$. 
\end{example}

\begin{figure}[H]
    \centering
    \includegraphics[width=0.6\linewidth]{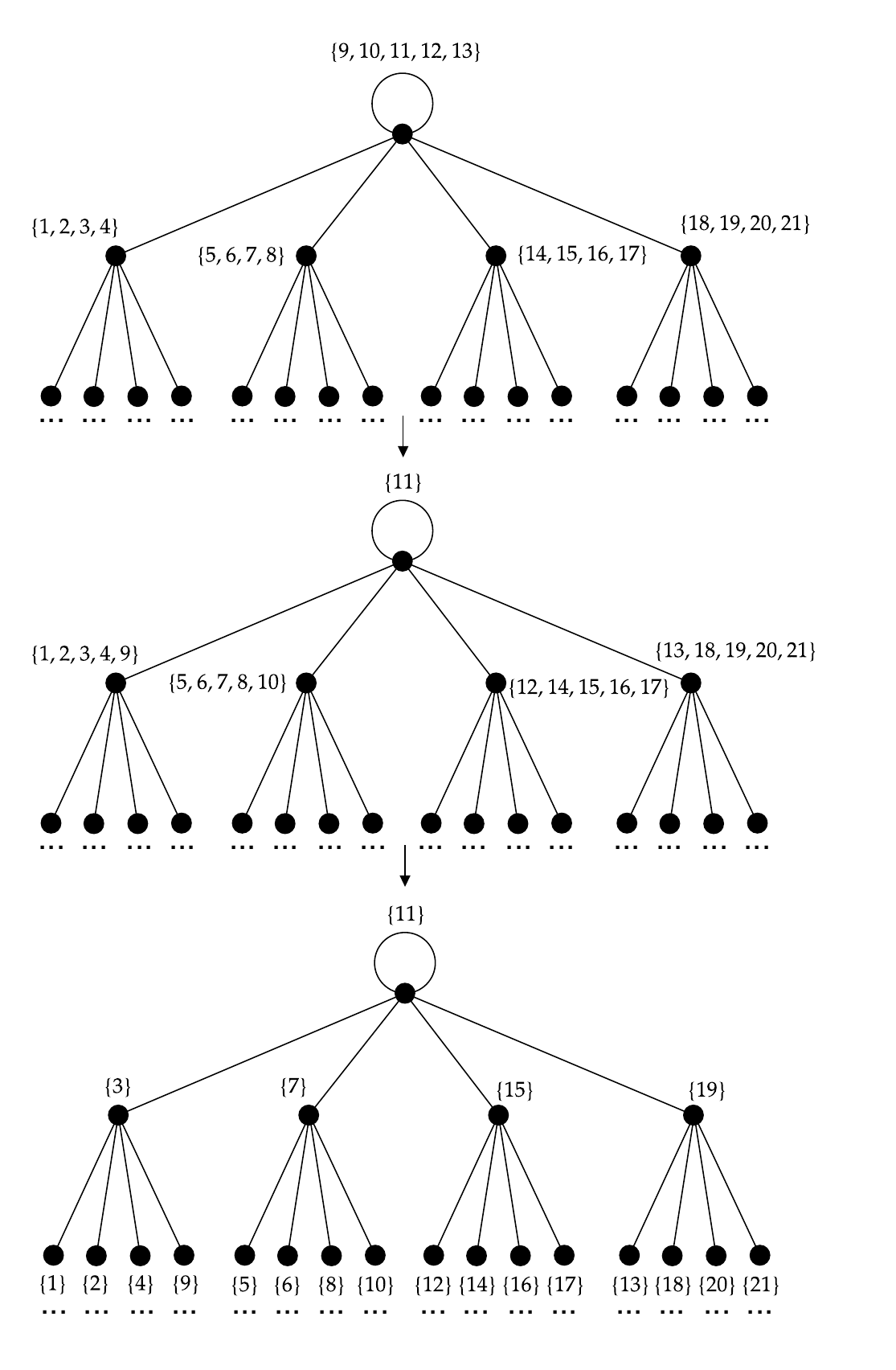}
    \caption{A stable configuration on a 4-ary tree.}
    \label{fig:descendant_example}
\end{figure}

We now proceed with the relevant results in \cite{ inagaki2024chipfiringundirectedbinarytrees, MR4827886}.

\begin{prop}[Proposition 4.4 in \cite{MR4827886}]\label{4.4}

For any vertex whose top straight ancestor is its parent, if the vertex is a left child, then the chip on it is less than the chips on its parent and its right sibling. The opposite result for a right child also holds.

\end{prop}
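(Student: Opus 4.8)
The plan is to exploit the single structural feature of the firing rule. When a vertex fires the $k+1$ selected chips $c_1 < c_2 < \cdots < c_{k+1}$, its leftmost child receives the \emph{smallest} chip $c_1$, its parent receives $c_{\lceil (k+1)/2\rceil}$, and each right child receives a chip strictly larger than $c_{\lceil (k+1)/2\rceil}$ (this holds for both parities of $k$, since $\lceil (k+1)/2\rceil \ge 2$). Thus in any single firing of a vertex $p$, the chip delivered to its leftmost child is strictly smaller than the chip delivered to its parent and than every chip delivered to a right child. Write $v$ for the vertex in question: since $v$ is a left child whose top straight ancestor is its parent $p$, and the only children that are straight descendants of $p$ are its leftmost and rightmost children, $v$ must be the \emph{leftmost} child of $p$. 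Let $r$ be its right sibling. The hypothesis that the top straight ancestor of $v$ equals $p$ means that $p$ is either the root or is itself not a leftmost child; this is exactly the feature that prevents arbitrarily small chips from being funneled into $v$ from strictly above $p$ along a left spine, and it lets us treat $p$ as the origin of the left chain feeding $v$.

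First I would prove the easier inequality $\mathrm{chip}(v) < \mathrm{chip}(r)$. Chips enter the subtree rooted at $v$ only through downward firings of $p$ and leave only through upward firings of $v$, and the analogous statement holds for $r$. I would track the last chip movement across the edges $pv$ and $pr$ and combine this with the per-firing ordering above, arguing by induction on the number of firings that the chips resident in $v$'s subtree remain dominated by those in $r$'s subtree. Because every firing of $p$ sends a strictly smaller chip left than right, and because within each child-subtree the process is again a $k$-ary chip-firing that preserves this left-small/right-large sorting recursively, the single surviving chip at $v$ is forced below the single surviving chip at $r$ in the stable configuration guaranteed by Theorem~\ref{endingconfig}.

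The main obstacle is the inequality $\mathrm{chip}(v) < \mathrm{chip}(p)$, because the final chip at $p$ may have arrived from \emph{below} --- a child of $p$ (possibly $v$ itself) firing its median chip upward --- rather than from $p$'s own firings. To control this I would examine the last firing across the edge $pv$. If that last crossing is downward, then $v$ receives $p$'s minimum selected chip and, by stability together with the one-chip-per-vertex structure of Theorem~\ref{endingconfig}, retains exactly it, while $p$ retains a chip it never sent left, which the sorting forces to be larger. If the last crossing is upward, I would descend recursively: the chip $v$ sends up is $v$'s median, which strictly exceeds the chip $v$ simultaneously routes to its own leftmost child, and I would invoke the inductive hypothesis on the shorter left chain below $v$. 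The hypothesis on the top straight ancestor enters precisely here, to guarantee that no chip smaller than $\mathrm{chip}(v)$ is ever delivered to $p$ from above and retained there: since $p$ does not lie on a left spine descending from its own parent, it never receives the running left-minimum from above. I expect the bookkeeping of these upward flows, and strengthening the induction hypothesis so that it is preserved across both the downward and the upward crossings, to be the crux of the argument; the right-sibling inequality and the downward case are comparatively routine.
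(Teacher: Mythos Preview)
The paper (and the original Musiker--Nguyen argument it cites) proves this via the \emph{Endgame} (Theorem~\ref{thm:Endgame}): the last $\ell-\mathrm{layer}(i)$ firings of each vertex are rigidly interleaved with those of its parent and children, and each such firing involves \emph{exactly} $k+1$ chips. With this in hand the proof is a two-line trace. In $p$'s last fire the three chips $a<b<c$ go to $v$, to the parent of $p$, and to $r$, respectively, so $\mathrm{chip}(v)=a<c=\mathrm{chip}(r)$ is immediate. After that fire $p$ is empty; its final chip is what the grandparent $g$ sends down in $g$'s last fire, and since $p$ is a \emph{right} child of $g$ (this is exactly what ``top straight ancestor of $v$ is $p$'' encodes), that chip is the maximum of $g$'s three chips---one of which is the $b$ that $p$ just sent up. Hence $\mathrm{chip}(p)\ge b>a=\mathrm{chip}(v)$. (If $p$ is the root, the self-loop returns the median $b$, and the same inequality holds.)

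Your plan bypasses the Endgame entirely, and this is a genuine gap rather than a stylistic difference. Your downward-crossing case claims that ``$p$ retains a chip it never sent left, which the sorting forces to be larger''; but after $p$'s last fire $p$ is empty, and its final chip arrives afterward from the grandparent. Without the Endgame you have no control over that chip: you do not know that $g$ fires exactly once more, that it then holds exactly $k+1$ chips including the $b$ from $p$, or that $p$ receives the largest of them. Your upward-crossing case is more symptomatic still: once the Endgame ordering is known, the last crossing of $pv$ is \emph{always} downward (a child's last fire precedes its parent's, else the parent would finish with at least two chips), so this case is vacuous---yet ruling it out already requires the structural lemma you are avoiding. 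The recursive descent you sketch for it has no well-founded invariant to land on.

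Your instinct that ``$p$ never receives the running left-minimum from above'' is exactly the right intuition for why the top-straight-ancestor hypothesis matters, but the way to cash it out is the Endgame counting argument: if the last firings were ordered otherwise, some vertex would finish with more than one chip, contradicting Theorem~\ref{endingconfig}. That gives both the rigid interleaving and the exactly-$(k{+}1)$-chip property, and the proposition then falls out in the short trace above.
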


\begin{prop}[Proposition 4.5 in \cite{MR4827886}]\label{4.5}

The vertices that are parents of the bottom vertices contain the smallest and largest chips among the subtree of their straight ancestors, excluding the vertices on the bottom layer.

\end{prop}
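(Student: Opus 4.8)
The plan is to prove the assertion about the smallest chip; the assertion about the largest one is entirely analogous, run along the rightmost path and using that for $k\ge 2$ a firing hands its largest selected chip to the rightmost child. Fix any stable configuration reachable from the initial configuration of $N_{k,\ell}$ chips at $v_0$, and write $c(v)$ for the chip occupying vertex $v$. By Theorem~\ref{endingconfig} every vertex in layers $1$ through $\ell$ carries exactly one chip, so the bottom straight left descendant is the leftmost vertex $m_\ell$ of layer $\ell$, and the vertex named in the proposition is its parent $m_{\ell-1}$, the leftmost vertex of layer $\ell-1$. Let $m_1=v_0, m_2,\dots,m_\ell$ denote the leftmost path, where $m_{j+1}$ is the leftmost child of $m_j$. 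It then suffices to show that $c(m_{\ell-1}) < c(w)$ for every vertex $w \neq m_{\ell-1}$ in layers $1$ through $\ell-1$.

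The engine of the proof is the following strengthening of Proposition~\ref{4.4}, which I will call the \emph{descent lemma}: for each $j$ with $1 \le j \le \ell-1$, the leftmost child $m_{j+1}$ of $m_j$ satisfies $c(m_{j+1}) < c(m_j)$ and $c(m_{j+1}) < c(x)$ for every vertex $x$ lying in the subtree of a non-leftmost child of $m_j$. Granting this, the proposition is quick. Applying the first inequality for $j=1,\dots,\ell-1$ gives the strict chain $c(m_1) > c(m_2) > \cdots > c(m_{\ell-1})$, so $m_{\ell-1}$ already beats every other leftmost-path vertex in layers $1$ through $\ell-1$. For an off-path vertex $w$ in those layers, let $m_j$ be the deepest leftmost-path ancestor of $w$; since $m_{j+1}$ is not an ancestor of $w$, the vertex $w$ lies in the subtree of a non-leftmost child of $m_j$, which sits at layer $j+1$, forcing $j+1 \le \ell-1$. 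The second inequality then gives $c(m_{j+1}) < c(w)$, while the chain gives $c(m_{\ell-1}) \le c(m_{j+1})$, so $c(m_{\ell-1}) < c(w)$ in every case.

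The substance, and the main obstacle, is the descent lemma itself. Proposition~\ref{4.4} supplies both inequalities for $m_2$, whose top straight ancestor is its parent $v_0$; the difficulty is the deeper vertices $m_3, m_4, \dots$, whose top straight ancestor is the root rather than their parent, so that Proposition~\ref{4.4} no longer applies. I would prove the lemma by analyzing the firing process instead of the static endgame, exploiting that the distribution rule is forced: for $k\ge 2$ the chip sent to the parent is never the smallest of the $k+1$ selected, so the overall smallest is always handed to the leftmost child. The aim is to maintain, throughout any legal firing sequence, a monotonicity invariant expressing that the smallest chips in the subtree of $m_j$ are steadily funneled into the leftmost-child branch, strongly enough that in the terminal configuration the single chip left at $m_{j+1}$ is below that at $m_j$ and below every chip in the non-leftmost subtrees. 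The delicate point is the feedback from above: after $m_j$ fires for the last time its parent may still fire and inject a chip, and one must rule out that such a late chip either undercuts $c(m_{j+1})$ or is deposited in a right-sibling subtree. Controlling these late injections, and doing so uniformly in $k$, where a single firing relocates $k$ sorted chips at once and the left/right split is governed by the threshold $\lfloor k/2\rfloor$, is the step I expect to demand the most care and the place where the binary argument underlying Proposition~\ref{4.4} must be re-derived rather than quoted.
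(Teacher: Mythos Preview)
The paper does not prove this proposition: it is quoted in Section~\ref{sec:priorResults} as a prior result from Musiker and Nguyen~\cite{MR4827886}, without proof, so there is no argument here to compare your attempt against. The nearest original result in the paper is Proposition~\ref{prop:smallestchip}, which locates the extreme chips at the bottom straight descendants themselves (not their parents) and is proved via the endgame structure of Theorem~\ref{thm:Endgame} rather than by tracking an invariant through an arbitrary firing sequence.

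On its own terms your proposal is a plan rather than a proof. You correctly reduce the claim to your descent lemma and correctly derive the proposition from it, but the lemma itself is left open: you identify the obstruction (chips injected from the parent after a vertex's final fire) without resolving it. You also treat only the special case where the top straight ancestor is the root $v_0$; the proposition as stated concerns every bottom vertex and the subtree rooted at its top straight ancestor, and while your lemma would plausibly carry over to an arbitrary straight-left or straight-right path, you have not argued that it does. If you pursue this, note that the endgame confluence developed in this paper (Theorem~\ref{thm:Endgame} and its corollary) is exactly the device that pins down the order and content of the late fires you flag as delicate, and working within the endgame would likely be cleaner than maintaining an invariant across the whole firing history.
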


\begin{lemma}[Lemma 3.1 in \cite{inagaki2024chipfiringundirectedbinarytrees}]

Suppose we start with $2^\ell-1$ labeled chips at the root of a binary tree. Then, in the stable configuration, chip 2 is located on the parent of the vertex containing chip 1, and chip $2^\ell-2$ is on the parent of the vertex containing chip $2^\ell-1$.

\end{lemma}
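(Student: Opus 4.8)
The plan is to first reduce the statement to locating the two extreme chips $1$ and $2^{\ell}-1$, and then invoke Proposition~\ref{4.5}. By Theorem~\ref{endingconfig} (with $k=2$), every stable configuration places exactly one chip on each vertex of the complete binary tree consisting of the root together with its descendants down to depth $\ell-1$. Write $w_L$ for the bottom straight left descendant of the root (the leftmost vertex on the deepest occupied layer) and $w_R$ for the bottom straight right descendant. I claim it suffices to show that chip $1$ sits on $w_L$ and chip $2^{\ell}-1$ sits on $w_R$. Indeed, once this is known, Proposition~\ref{4.5} applied to the straight ancestors of $w_L$ (whose top straight ancestor is the root, so that the relevant subtree is the whole tree) says that the parent of $w_L$ carries the smallest chip among all vertices off the bottom layer. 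Since the only chip smaller than chip $2$ is chip $1$, which by assumption lies on the bottom layer, that smallest off-bottom chip is exactly chip $2$. Hence chip $2$ lies on the parent of $w_L$, which is the parent of the vertex holding chip $1$. The symmetric application of Proposition~\ref{4.5} puts chip $2^{\ell}-2$ on the parent of $w_R$, i.e.\ on the parent of chip $2^{\ell}-1$.

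It remains to pin down chips $1$ and $2^{\ell}-1$. Under the firing rule with $k=2$, a vertex that fires a triple $a<b<c$ sends $a$ to its left child, $c$ to its right child, and the median $b$ to its parent. Since chip $1$ is the global minimum, whenever it belongs to a fired triple it is the smallest element and is therefore sent to a left child; it is never sent to a parent or to a right child. Consequently the successive hosts of chip $1$ move only along left edges, so in the final configuration chip $1$ lies on a straight left descendant of the root. The mirror-image argument shows that chip $2^{\ell}-1$, being the global maximum, only ever moves to right children and hence ends on a straight right descendant of the root.

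The main work is to show that chip $1$ descends all the way to $w_L$ rather than halting at an interior left-path vertex (and symmetrically for chip $2^{\ell}-1$). Combining the previous paragraph with Proposition~\ref{4.5} already localizes the difficulty: if chip $1$ did not reach the bottom layer then, being the global minimum, it would be the smallest off-bottom chip, and Proposition~\ref{4.5} would force it onto the parent $p_L$ of $w_L$, which lies at depth $\ell-2$ on the leftmost path. I plan to rule this out by tracking chip $1$ through the firing process and using the conservation identity $f_{p(v)}=f_v+|T_v|$ for the firing counts $f_v$, where $f_v$ denotes the number of times $v$ fires and $T_v$ its occupied subtree (each firing of the parent sends one chip into $T_v$ and each firing of $v$ sends one chip out, so $f_{p(v)}-f_v=|T_v|$). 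The key mechanism is that a vertex can withhold chip $1$ from one of its firings only while it simultaneously holds at least three chips larger than chip $1$; but the number of such larger chips that can be delivered from above before chip $1$ is itself forced downward is strictly too small to sustain this at every level of the leftmost path. Quantifying this ``limited-exclusion'' bound, and thereby showing that $p_L$ must fire chip $1$ into $w_L$, is the step I expect to be the main obstacle; everything else follows from minimality and the two cited propositions.
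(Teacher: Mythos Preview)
Your reduction is exactly right, and it matches the framework the paper uses: once you know that chip $1$ lands on the bottom straight left descendant $w_L$ (and symmetrically for chip $2^\ell-1$), Proposition~\ref{4.5} immediately yields the conclusion. Note that the present paper does not re-prove this lemma directly---it is cited from \cite{inagaki2024chipfiringundirectedbinarytrees}---but it does prove the $k$-ary generalization as Proposition~\ref{prop:smallestchip}, and that proof is what you should compare against.

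The genuine gap in your proposal is the step you yourself flag as the ``main obstacle'': you have not shown that chip~$1$ actually reaches the bottom layer. Your proposed ``limited-exclusion'' mechanism (bounding how many larger chips can be delivered from above so that chip~$1$ is eventually forced down) is not fleshed out, and it is not clear that the counting goes through cleanly, since chips can repeatedly move up and down across the same edge. The identity $f_{p(v)}-f_v=|T_v|$ is correct but by itself says nothing about \emph{which} chips participate in any given fire.

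The paper's route to closing this gap is completely different and avoids any delicate counting: it uses the Endgame structure (Theorem~\ref{thm:Endgame}). The Endgame forces a rigid order on the final $\ell-1$ rounds of firing, with every vertex firing on exactly $k+1$ chips. One then argues (as in the proof of Proposition~\ref{prop:smallestchip}) that chip~$1$ is already in the appropriate subtree when the Endgame begins, and that the first \emph{wave} of the Endgame---firing each vertex from layer~$1$ down to layer~$\ell-1$ once---pushes chip~$1$ leftward at every step until it lands on $w_L$. The Endgame is the missing tool in your argument; without it (or some substitute), your proof is incomplete.
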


Using the above properties, a bound was then constructed in \cite{inagaki2024chipfiringundirectedbinarytrees} on the number of possible stable configurations for a binary tree. For labeled chip-firing on a $k$-ary tree, let $Z_{k, \ell}$ denote the number of possible stable configurations with $N_{k, \ell}$ chips at the root and let $T_{k, \ell}$ denote the number of possible orderings of the chips within a subtree consisting of $\ell$ layers in the stable configuration. We now state known bounds for $Z_{2, \ell}$ and $T_{2, \ell}$. First, let $A_\ell$ denote the \textit{Euler zigzag numbers}, or the number of alternating permutations of the set $\{1,2,\ldots, \ell \}$. Further, we define $\beta_\ell$ and $\gamma_\ell$ as a shorthand for the following expressions:

$$\zeta_\ell = A_\ell \binom{2^\ell-3}{\ell} \binom{2^\ell-\ell-3}{2^{\ell-1}-2, 2^{\ell-2}-2, 2^{\ell-3}-1, 2^{\ell-4}-1, \ldots, 1}$$ and $$\gamma_\ell = A_\ell \binom{2^\ell-5}{\ell} \binom{2^\ell-\ell-5}{2^{\ell-1}-3, 2^{\ell-2}-3, 2^{\ell-3}-1, 2^{\ell-4}-1, \ldots, 1}.$$

We then have the following bound, known as the zigzag bound.

\begin{theorem}[Theorem 3.6 of \cite{inagaki2024chipfiringundirectedbinarytrees}]\label{thm:BinaryIKL} If there are $N_{2, \ell}$ labeled chips at the root of a binary tree with $\ell \ge 4$, then $$T_{2,\ell} \le 10^{2^{\ell-4}} \zeta_\ell \prod_{i=4}^{\ell-1} \zeta_i^{2^{\ell-1-i}}$$ and $$Z_{2, \ell} \le 10^{2^{\ell-4}} \gamma_\ell \prod_{i=4}^{\ell-1} \zeta_i^{2^{\ell-1-i}}.$$

\end{theorem}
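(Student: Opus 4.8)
My plan is to prove both bounds simultaneously by induction on $\ell$, organized around a \emph{spine decomposition} of the stable configuration. Fix a stable configuration of the depth-$\ell$ binary tree (with the root counted as layer $1$), and consider the straight-left descendant chain $v_0, v_1, v_3, \dots$ — the \emph{spine} — which consists of exactly $\ell$ vertices, one in each layer. Hanging off the non-spine child of the spine vertex in layer $j$ is a full binary subtree of depth $\ell-j$, for $j = 1, \dots, \ell-1$. These $\ell-1$ subtrees, together with the $\ell$ spine vertices, partition all $2^\ell-1$ vertices. An ordering of the whole configuration is thus determined by three choices: (i) which chips lie on the spine and in what order; (ii) how the remaining chips are split among the hanging subtrees; and (iii) the internal ordering within each hanging subtree. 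I will bound each factor and then recurse on (iii).

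For (i), the key structural input is the zigzag phenomenon coming from Proposition~\ref{4.4}: reading the spine from the root downward, the chip values must alternately rise and fall, so the relative order of the $\ell$ spine chips is an \emph{alternating permutation}. Hence, once the set of spine chips is fixed, there are at most $A_\ell$ admissible orders. Proposition~\ref{4.5} together with Lemma 3.1 of \cite{inagaki2024chipfiringundirectedbinarytrees} pin the extreme chips to the ends of the spine and the straight-right chain, removing them from the pool of freely assignable labels; this accounts for the factor $A_\ell\binom{2^\ell-3}{\ell}$ in $\zeta_\ell$, the binomial choosing the $\ell$ spine chips from the $2^\ell-3$ not already forced. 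For (ii), distributing the leftover $2^\ell-\ell-3$ chips among the hanging subtrees — whose admissible sizes are $2^{\ell-1}-2, 2^{\ell-2}-2, 2^{\ell-3}-1, \dots, 1$ after the forced chips are deducted from the top two — contributes exactly the multinomial coefficient appearing in $\zeta_\ell$.

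The recursion enters through (iii): each hanging subtree of depth $d$ contributes at most $T_{2,d}$ orderings, so $T_{2,\ell} \le \zeta_\ell \prod_{d=1}^{\ell-1} T_{2,d}$, and unrolling this recurrence yields the exponent $2^{\ell-1-i}$ on each $\zeta_i$. I will terminate the recursion at depth $4$, bounding the ordering count of each of the $2^{\ell-4}$ depth-$4$ subtrees by the constant $10$ (checked directly for the base case, which is why the hypothesis is $\ell \ge 4$); this produces the leading factor $10^{2^{\ell-4}}$ and truncates the product to $i = 4, \dots, \ell-1$. The bound for $Z_{2,\ell}$ follows from the same decomposition applied at the genuine root, where Lemma 3.1 of \cite{inagaki2024chipfiringundirectedbinarytrees} forces \emph{four} chips ($1, 2, 2^\ell-2, 2^\ell-1$) rather than two; this shrinks the free pool from $2^\ell-3$ to $2^\ell-5$ and the top two subtree sizes by an extra unit each, which is precisely the passage from $\zeta_\ell$ to $\gamma_\ell$.

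\textbf{Main obstacle.} The crux is justifying that (i), (ii), and (iii) may be multiplied legitimately — that is, that the spine zigzag, the cross-subtree chip allocation, and the internal subtree orders impose no further mutual constraints beyond those already counted, so the product genuinely over-counts rather than under-counts (an undercount would invalidate the upper bound). Equally delicate is verifying that the alternating condition is all that Proposition~\ref{4.4} forces along the spine, and bookkeeping exactly which chips are pinned by Proposition~\ref{4.5} and the smallest/largest-chip lemma so that the binomial and multinomial parameters come out as stated. Once the decomposition is established, confirming the constant base-case bound and checking that the unrolled exponents telescope to $2^{\ell-1-i}$ is routine.
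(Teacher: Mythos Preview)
Your overall architecture (choose a root-to-leaf path, count the chips on it, count the allocation to hanging subtrees, recurse) matches the paper's method, but you have picked the wrong path, and this breaks the argument at step~(i). The factor $A_\ell$ in $\zeta_\ell$ is the number of \emph{alternating} permutations, and it is justified only along the \emph{zigzag} path: the path that alternates at each step between the rightmost and leftmost child. Along such a path every non-root vertex has its parent as its top straight ancestor (the direction flips each layer), so Proposition~\ref{4.4} applies at every step and forces the chip values to alternate up/down. On your straight-left spine $v_0, v_1, v_3, v_7, \dots$, every vertex below $v_1$ has the \emph{root} as its top straight ancestor, not its parent, so Proposition~\ref{4.4} gives you nothing after the first step; there is no alternating constraint on these chips, and the $A_\ell$ bound is unjustified.

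The same wrong choice of path corrupts the binomial and multinomial bookkeeping. On the straight-left spine the smallest chip (chip~$1$) and, by Lemma~3.1 of \cite{inagaki2024chipfiringundirectedbinarytrees}, chip~$2$ lie \emph{on} the spine itself at the bottom two vertices, so they are not ``removed from the pool of freely assignable labels'' as you claim. In the correct zigzag decomposition, the smallest and largest chips sit at the bottom of the straight-left and straight-right chains respectively, which are \emph{off} the zigzag for $\ell\ge 3$; that is what produces $\binom{2^\ell-3}{\ell}$ and the two ``$-2$'' entries in the multinomial. Once you replace your straight-left spine by the alternating zigzag, the rest of your outline (unrolling the recursion, terminating at depth~$3$ with $T_{2,3}\le 10$, and passing from $\zeta_\ell$ to $\gamma_\ell$ at the genuine root via the two extra pinned chips) is exactly the paper's argument.
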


\section{An Upper Bound on the Number of Stable Configurations}\label{sec:UpperBounds}

We now derive an upper bound on the number of stable configurations of chips on the undirected $k$-ary tree starting with $N_{k, \ell}$ labeled chips at the root.

\subsection{Generalizing the Endgame of Musiker and Nguyen}

We now expand known results from \cite{inagaki2024chipfiringundirectedbinarytrees} and \cite{MR4827886} on binary trees to $k$-ary trees in general. We first begin by finding the set of firings at the end of the labeled chip-firing process where confluence holds on $k$-ary trees. We refer to this set as the \textit{Endgame}. The Endgame on binary trees was used in \cite{MR4827886} to prove Proposition \ref{4.4} and Proposition \ref{4.5}.

First, let $(i, j)$ be the $j$th-to-last fire of vertex $v_i$, where the 0th-to-last fire of a vertex denotes the last time that vertex fires. Next, define the relation $(i_1, j_1) <  (i_2, j_2)$ if $(i_1, j_1)$ must occur before $(i_2, j_2)$. This then creates a poset on the possible moves in a labeled chip-firing process. We now define the Endgame similarly to how it was done in \cite{MR4827886}.

\begin{definition}
We call the set of moves $(i,j)$ where $\mathrm{layer}(i)+j < \ell$ the \textit{Endgame}.
\end{definition}

Next, we show that during the Endgame, there is a strict ordering between when the parent and its children fire; this is our generalization of Proposition 4.2 of \cite{MR4827886}. This strict ordering will then allow us to prove that confluence holds on the endgame. Note that for a $k$-ary tree, if a non-root vertex has label $s$, its children have labels $ks+1, ks+2, \ldots, ks+k$ and its parent has label $\left \lfloor \frac {s-1} k\right \rfloor$.

\begin{theorem}\label{thm:Endgame}
Consider a vertex $i$ with $\mathrm{layer}(i) > 1$. Then for all $j$ such that $\mathrm{layer}(i)+j < \ell$, we have:

\begin{itemize}
    \item $\left( \left \lfloor \frac {i-1} k\right \rfloor, j \right) < (i,j) < \left( \left \lfloor \frac {i-1} k\right \rfloor, j+1 \right)$
    \item $(i,j)$ occurs when vertex $i$ has $k+1$ chips.
\end{itemize}
\end{theorem}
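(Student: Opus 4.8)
The plan is to establish both bullet points together by induction on the layers, working upward from the deepest firing vertices and with an inner induction on $j$. The key preliminary fact is that the firing counts are forced: the unlabeled dynamics underlying the process is confluent (Theorem~\ref{thm:Confluence}), and the labels only affect which chip travels where, never how many times a vertex fires. Writing $f_L$ for the number of times a layer-$L$ vertex fires, conservation of chips at a non-root vertex (the $f_{L-1}$ chips received from the parent plus the $kf_{L+1}$ chips received from the $k$ children equal the $(k+1)f_L$ chips sent out plus the single chip it retains in the stable configuration of Theorem~\ref{endingconfig}) gives
\[ f_{L-1} = (k+1)f_L - kf_{L+1} + 1, \qquad f_{\ell}=f_{\ell+1}=0, \]
so a layer-$L$ vertex fires $f_L$ times, of which exactly the last $\ell-L$ are Endgame fires (one checks $f_L \ge \ell-L$). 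In particular the parent move $\left(\lfloor\frac{i-1}{k}\rfloor, j+1\right)$ is itself an Endgame move whenever $(i,j)$ is, so all the referenced moves exist.

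For the inductive step, fix $v_i$ at layer $L \ge 2$ and assume the theorem for all deeper vertices, in particular for the $k$ children of $v_i$ (layer $L+1$). Applying the Endgame interleaving to each child $c$ with parent $v_i$ shows that strictly between the consecutive $v_i$-fires $(i,j+1)$ and $(i,j)$ each child fires exactly once, so $v_i$ receives exactly $k$ chips from its children inside that window. Running an inner induction on $j$ downward from the earliest Endgame fire, we may assume $(i,j+1)$ was an exact fire, so $v_i$ holds $0$ chips just afterward; the $k$ child-chips bring it to $k$, and to perform $(i,j)$ — which needs the threshold $k+1$ — $v_i$ must receive at least one parent chip in the window. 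This already forces a parent fire immediately adjacent to $(i,j)$. The entire content of the exact-chip-count bullet is then the matching upper bound: $v_i$ receives \emph{no more} than one parent chip per window, i.e.\ it fires the instant it reaches $k+1$.

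This upper bound is the step I expect to be the main obstacle, because it cannot be read off from the parent's own Endgame structure: the parent lies one layer shallower and is handled only later in the induction, so assuming its regularity would be circular. I would close it through the reverse chip-dependency together with a global count: the parent can fire only once it has accumulated $k+1$ chips, and $v_i$ is one of its $k$ sources, so the parent cannot deliver two chips to $v_i$ across a single $v_i$-window without itself firing twice, which would demand replenishment that $v_i$ has not yet supplied. Balancing the $f_{L-1}$ total parent deliveries against the $\ell-L$ Endgame windows (plus the single post-final delivery raising $v_i$ from $0$ to its stable chip) forces one delivery per window, simultaneously yielding the exact count and pinning the interleaving of $\left(\lfloor\frac{i-1}{k}\rfloor, j\right)$, $(i,j)$, and $\left(\lfloor\frac{i-1}{k}\rfloor, j+1\right)$ asserted in the first bullet. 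Two further points need care: the base of the inner induction must be matched to the Endgame boundary $\mathrm{layer}(i)+j=\ell$, where the last pre-Endgame fire is shown to leave $v_i$ empty; and the root is excluded precisely because its self-loop alters the local conservation, making it fire with $k+1$ chips as the globally last move rather than as part of the interleaving — exactly the hypothesis $\mathrm{layer}(i)>1$. I expect the $k$-ary case to differ from the binary case of \cite{MR4827886} only in this bookkeeping of $k$ simultaneous child deliveries per window, since the routing of chips to left versus right children affects only the labels, whereas the Endgame ordering argument is essentially label-free.
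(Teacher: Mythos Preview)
Your plan has a genuine structural gap at precisely the step you flag as the main obstacle. By inducting from the deepest layers toward the root, the statement for the children of $v_i$ does give you the interleaving of child-fires with $v_i$-fires, but it gives you nothing about the parent. Your proposed patch---``the parent cannot deliver two chips to $v_i$ across a single $v_i$-window without itself firing twice, which would demand replenishment that $v_i$ has not yet supplied''---does not hold: $v_i$ is only one of the parent's $k+1$ neighbours. The parent also receives chips from the other $k-1$ siblings of $v_i$ and from the grandparent (or self-loop), and at this stage of your induction you know nothing about when those deliveries occur, since siblings lie at the same layer and the grandparent is shallower. So nothing prevents the parent from firing twice inside a single $v_i$-window on chips drawn entirely from those other sources. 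The global count $f_{L-1}$ likewise cannot close the gap, because it tallies \emph{all} parent fires, and you have no control over how many of them fall in the pre-Endgame portion of the process. Two subsidiary gaps compound this: at the outer base case (layer $\ell-1$) the children never fire, so the ``$k$ child-chips per window'' count is zero and the argument does not start; and at the inner boundary the claim that the last pre-Endgame fire leaves $v_i$ with $0$ chips is asserted rather than proved.

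The paper's proof runs the induction in the \emph{opposite} direction---from the root outward, simultaneously on $j$---which dissolves the circularity: when you treat $v_i$, the parent's interleaving with the grandparent is already known. Each individual ordering is then established not by chip-accounting forward in time but by contradiction: assume the wrong temporal order between two specified fires, track the chips received by the relevant vertex \emph{after its final fire}, and obtain $\ge 2$ chips in the stable configuration, contradicting Theorem~\ref{endingconfig}. The ``exactly $k+1$ chips'' clause then falls out once the ordering is pinned down. No firing-count recursion $f_L$ is needed at all.
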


\begin{proof}
We prove the above proposition by induction on both $\mathrm{layer}(v_i)$ and $j$. We do this by proving the following steps:
\begin{enumerate}
    \item We have $(0,0) < (a, 0)$ for all $a \in \{1, 2, \dots, k\}$.
    \item We have $(i,0) < ({ki+a},0)$ for all $a \in \{1, 2, \dots, k\}$, assuming $\left ( \left \lfloor \frac {i-1} k \right \rfloor , 0 \right ) < (i,0)$.
    \item For all $a \in \{1, 2, \dots, k\}$, we have $(ki+a, 0) < (i,1)$.
    \item We have $(0,j) < (a, j)$ for all $a \in \{1, 2, \dots, k\}$, assuming inductively $(a',j-1)< (0,j)$ for all $a' \in \{1, 2, \dots, k\}$.
    \item We obtain $(i,j) < ({ki+a'}, j)$ for all $a' \in \{1,2,\dots, k\}$, assuming inductively $\left ( \left \lfloor \frac {i-1} k \right \rfloor , j \right ) < (i,j)$ and $(ki+a, j-1) < (i,j)$ for all $a \in \{1,2, \dots, k\}$
    \item We have $(i,j) < \left ({\left \lfloor \frac {i-1} k \right \rfloor}, j+1 \right)$, assuming inductively $({ki+a}, j-1) < (i,j)$ for all $a \in \{1,2, \dots, k\}$.
\end{enumerate}

When steps 1, 2, and 3 are proven, we have proven the base case. Proving steps 4, 5, and 6 proves the inductive step. 

We now prove each step of the induction.

\begin{enumerate}
    \item Suppose $(0,0)$ occurs before $(1,0)$. Then, after $(0,0)$, vertex 1 will have at least 1 chip. Further, $(1,0)$ will then increase the number of chips at $(0,0)$ by another chip, so it will have at least 2 chips at the end, a contradiction. Similarly, we know that $(0,0) < (a,0)$ for all $a \in \{2, 3, \dots, k\}$. Therefore, $(0,0)$ must occur when vertex 0 has $k+1$ chips, as it needs to end with 1 chip.

    \item Suppose $(i,0)$ occurs before $(ki+1, 0)$. Then after $(i,0)$ occurs, vertex $i$ will gain two chips from $(ki+1, 0)$ and $\left ( \left \lfloor \frac {i-1} k \right \rfloor, 0 \right )$, a contradiction. Similarly, we know that $(i,0) < (ki+a,0)$ for all $a \in \{2, 3, \dots, k\}$. Further, since vertex $i$ must end with 1 chip, $(i,0)$ must occur when vertex $i$ has $k+1$ chips.

    \item Suppose $(ki+1,0)$ occurs before $(i,1)$. Then after $(ki+1, 0)$, vertex $ki+1$ will gain two chips from $(i,1)$ and $(i,0)$, a contradiction. Therefore, $(ki+a, 0) < (i, 1)$ for all $a \in \{1, 2, \dots, k\}$.

    \item Suppose $(0,j)$ occurs before $(1,j)$. Then since $(s,j-1)< (0,j)$ for all $s \in \{1,2, \dots, k\}$, after $(0,j)$, vertex 0 will keep at least 1 chip, lose $k(j-1)$, gain at least $j$ chips from vertex 1, and gain at least $(k-1)(j-1)$ chips from vertices $2, 3, \ldots, k$. Then vertex 0 will have at least 2 chips, a contradiction. Similarly, we know that $(0,j) < (a,j)$ for all $a \in \{1, 2, \dots, k\}$. Therefore, after $(0,j)$, vertex 0 will lose $k(j-1)$ chips and gain $k(j-1)$, meaning $(0,j)$ must occur when vertex 0 has $k+1$ chips.

    \item Suppose $(i,j)$ occurs before $(ki+1, j)$. Then after $(i,j)$ vertex $i$ will lose $(k+1)(j-1)$ chips. Further, since $\left ( \left \lfloor \frac {i-1} k \right \rfloor, j \right ) < (i,j)$, vertex $i$ will receive at least $j$ chips from vertex $\left \lfloor \frac {i-1} k \right \rfloor$, $j$ chips from vertex $ki+1$, and $(k-1)(j-1)$ chips from vertices $ki+2, \ldots, ki+k$. Then vertex $i$ will have at least 2 chips at the end, a contradiction. Further, since $(i,j) < (ki+a', j)$ for all $a' \in \{1,2, \dots, k\}$, after $(i,j)$, vertex $i$ will lose $(k+1)(j-1)$ chips and gain $j+k(j-1) = (k+1)j-k$ chips, so $(i,j)$ must occur when vertex $i$ has $k+1$ chips.

    \item Suppose $(i,j)$ occurs before $\left(\left \lfloor \frac {i-1} k \right \rfloor, j+1 \right)$. Then after $(i,j)$, vertex $i$ will lose $(k+1)(j-1)$ more chips. However, since $(ki+a, j-1) < (i,j)$ for all $a \in \{1, 2, \dots, k\}$, vertex $i$ will gain at least $k(j-1)$ chips from vertices $ki+1, \ldots, ki+k$ and also gain $j+1$ chips from vertex $\left \lfloor \frac {i-1} k \right \rfloor$. Therefore, vertex $i$ receives at least $(k+1)j-1$ chips, leaving vertex $i$ with at least 2 chips at the end, a contradiction. Therefore, $(i,j) < \left(\left \lfloor \frac {i-1} k \right \rfloor, j+1 \right).$
\end{enumerate}
\end{proof}

Next, we show that confluence holds on the endgame of a labeled chip-firing process on a $k$-ary tree; this is our generalization of Corollary 4.3 of \cite{MR4827886}.

\begin{corollary}[Endgame Confluence]
For a looped $k$-ary tree with $N_{k, \ell}$ chips at the root, once $(0, \ell-2)$ has occurred, the order in which vertices fire does not affect the stable configuration.
\end{corollary}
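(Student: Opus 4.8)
The plan is to show that the Endgame moves---those $(i,j)$ with $\mathrm{layer}(i)+j<\ell$---form exactly the set of firings triggered once $(0,\ell-2)$ has occurred, and that on this set the partial order established in Theorem~\ref{thm:Endgame} is rigid enough to force confluence. First I would observe that $(0,\ell-2)$ is itself an Endgame move, since $\mathrm{layer}(0)+(\ell-2)=0+\ell-2<\ell$, and that by Theorem~\ref{endingconfig} the final stable configuration has exactly one chip on each vertex in layers $1$ through $\ell$. This pins down precisely how many times each vertex must fire in total: a vertex $v_i$ in layer $m$ must fire until it retains a single chip, and counting its total firings shows that the ``last few'' firings---precisely the indices $j$ with $\mathrm{layer}(i)+j<\ell$---are the Endgame moves. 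The claim is then that all moves outside the Endgame occur strictly before $(0,\ell-2)$, so that once $(0,\ell-2)$ fires, the only firings that remain are Endgame moves.

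Next I would invoke the two conclusions of Theorem~\ref{thm:Endgame} directly. The first bullet, $(\lfloor\frac{i-1}{k}\rfloor,j)<(i,j)<(\lfloor\frac{i-1}{k}\rfloor,j+1)$, says that within the Endgame the relative order of a parent's and a child's firings is completely determined: each child-firing $(i,j)$ is sandwiched between two consecutive parent-firings. Chaining these inequalities up and down the tree shows that the Endgame moves are \emph{totally} comparable along any root-to-vertex path, and more importantly that the \emph{set} of Endgame moves below $(0,\ell-2)$ in the poset is forced. The second bullet---that each Endgame firing $(i,j)$ occurs exactly when vertex $i$ holds precisely $k+1$ chips---is the crucial ingredient: it removes any freedom in \emph{which} chips are available to be fired, because the firing vertex has no excess chips to choose among. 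Since the firing rule deterministically sends the $\lceil\frac{k+1}{2}\rceil$th-smallest chip to the parent and the $i$th-smallest of the rest to the $i$th left child, a firing from a vertex holding exactly $k+1$ chips is a completely deterministic operation on a determined multiset of chips.

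I would then argue confluence by a standard diamond/exchange argument restricted to the Endgame. Suppose two legal firing sequences both start from the configuration present immediately after $(0,\ell-2)$. Every remaining move in each sequence is an Endgame move, so by the first bullet of Theorem~\ref{thm:Endgame} the two sequences fire the same multiset of moves $(i,j)$, and any two moves that are incomparable in the poset act on disjoint vertices (a vertex and a non-neighbor), hence commute. By the second bullet, whenever a given move $(i,j)$ is executed, the firing vertex holds exactly $k+1$ chips regardless of the order, so the deterministic firing rule sends the identical chips in both sequences. A transposition argument---repeatedly swapping adjacent commuting moves to bring one sequence into agreement with the other---then shows both sequences reach the identical stable configuration.

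The main obstacle I anticipate is the bookkeeping that translates ``once $(0,\ell-2)$ has occurred'' into ``only Endgame moves remain.'' This requires verifying that no non-Endgame move can be deferred past $(0,\ell-2)$, i.e. that every move $(i,j)$ with $\mathrm{layer}(i)+j\ge\ell$ is forced \emph{earlier} in the poset. One must check this against the inequalities of Theorem~\ref{thm:Endgame}, paying attention to the boundary between the Endgame and the rest of the process and to the special role of the root's self-loop and of layer-$1$ vertices (which Theorem~\ref{thm:Endgame} explicitly excludes by requiring $\mathrm{layer}(i)>1$). Handling that boundary---and confirming that the exactly-$(k+1)$-chips condition genuinely propagates to all Endgame firings including those adjacent to non-Endgame firings---is where the argument needs the most care; the rest follows the confluence template of Corollary~4.3 in \cite{MR4827886}.
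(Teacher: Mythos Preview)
Your approach is essentially the same as the paper's: both arguments extract from Theorem~\ref{thm:Endgame} that during the Endgame a fireable vertex never has a fireable neighbor (so simultaneously enabled firings commute) and that each Endgame firing involves exactly $k+1$ chips (so the labeled firing rule is deterministic), then conclude confluence by an exchange argument. The paper's version is terser and does not explicitly address the bookkeeping you flag---that $(0,\ell-2)$ marks the boundary past which only Endgame moves remain---so your more careful treatment of that transition is a welcome elaboration rather than a departure.
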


\begin{proof}
During the endgame, a vertex fires for the $j$th to last time before its parent fires for the $j$th to last time but after its parent fires for the $(j+1)$th to last time. Therefore, if a vertex can fire, none of its children or parents can fire. Further, since a vertex can only fire when it has exactly $k+1$ chips, the firing cannot be affected by any of its neighbors. Therefore, the order does not affect the stable configuration.
\end{proof}

For example, Figure \ref{fig:endgameexample} is a case of the endgame on a 4-ary tree. Notice that there is only 1 possible stable configuration, as confluence holds on the endgame. Furthermore, since confluence holds on the endgame, we now consider a specific order of firings. First, notice that if we have $N_{k, \ell}$ chips at the root, then the endgame starts when the root has $k+1$ chips, each vertex on layers 2 to $\ell-1$ has $k$ chips, and each vertex on layer $\ell$ or lower has 0 chips. 

\begin{figure}[h]
    \centering
    \includegraphics[width=0.9\linewidth]{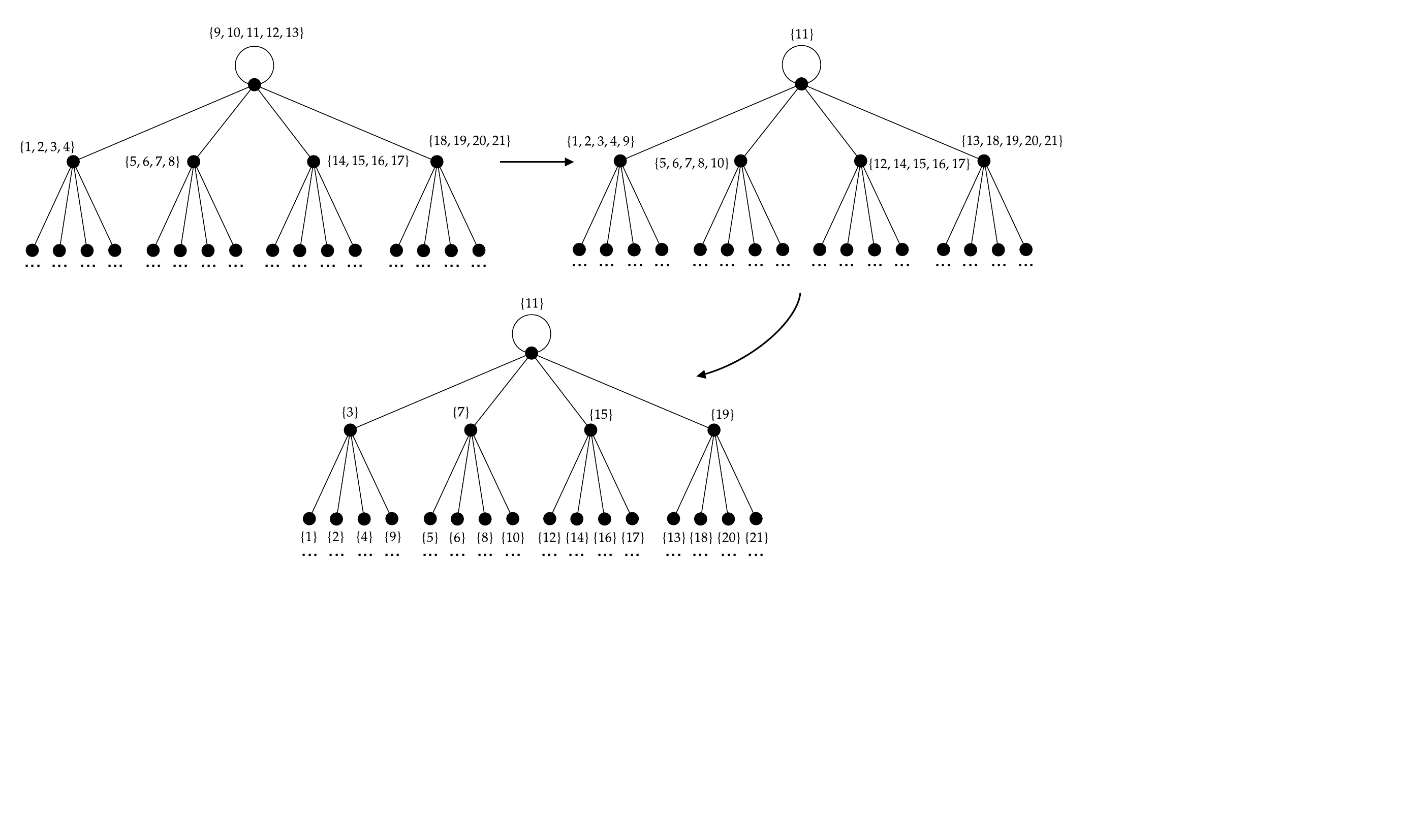}
    \caption{A labeled chip-firing process on a 4-ary tree, starting from the endgame and ending at the stable configuration.}
    \label{fig:endgameexample}
\end{figure}

Next, we assume that in the endgame, we fire in the following process: we first fire each vertex from 0 to $N_{k, \ell-1}-1$ exactly once, then we fire each vertex from 0 to $N_{k, \ell-2}-1$ exactly once and we continue so on and so forth until we fire just vertex $N_{k, \ell-(\ell-1)}-1 = 0$, the root, exactly once. 

Notice that in every step, we fire every vertex that can fire once. Denote each of these steps as a \textit{wave}, shown in Figure \ref{fig:wave}. We now use this specific firing during the endgame to prove various properties of a stable configuration and later extend the zigzag bound to $k$-ary trees in general.

\begin{figure}[H]
    \centering
    \includegraphics[width=0.8\linewidth]{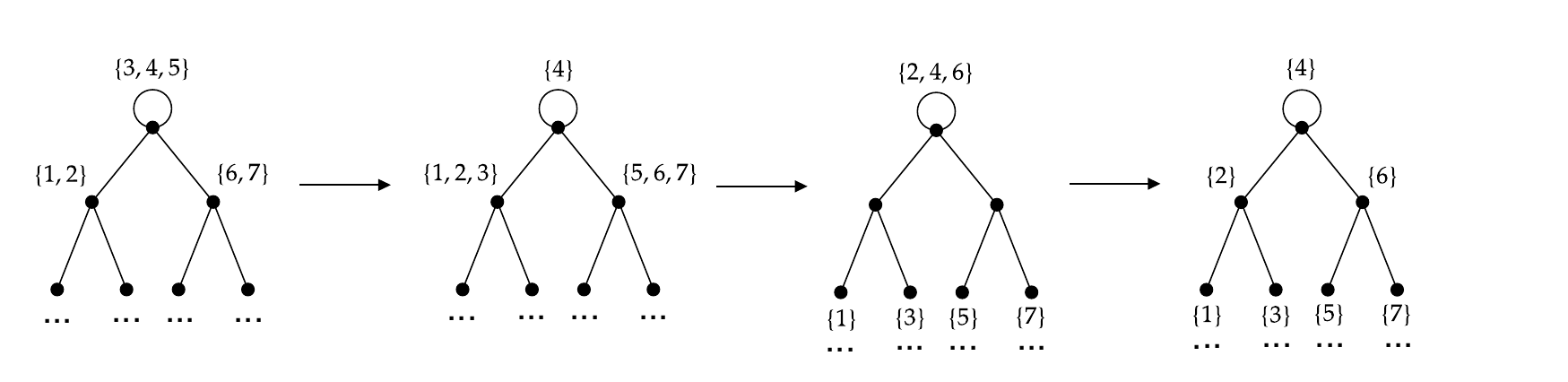}
    \caption{The first wave of the endgame for a labeled chip-firing process on a binary tree starting with 7 chips at the root.}
    \label{fig:wave}
\end{figure}

\begin{prop}\label{prop:smallestchip}
In labeled chip-firing on an infinite $k$-ary tree with $N_{k, \ell}$ labeled chips at the root, the bottom left and right descendants of a vertex contain the smallest and largest chips among its subtree.
\end{prop}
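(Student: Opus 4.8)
The plan is to follow the smallest chip of a subtree and show the firing rule drives it into the bottom-left corner; the maximum/bottom-right case is then the mirror image, since when a vertex fires the smallest of the selected chips always goes to the leftmost child and the largest always to the rightmost child, while the chip exported to the parent is the $\lceil\frac{k+1}{2}\rceil$-th smallest, which for $k\ge 2$ is neither extreme. Throughout I work inside the Endgame, where Endgame Confluence lets me fix the wave-firing order and Theorem~\ref{thm:Endgame} pins down the relative timing of fires.

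Fix a vertex $v$, let $d_v$ denote its bottom straight left descendant, and let $\mu$ be the smallest chip that the stable configuration places in the subtree $T_v$ rooted at $v$. I would first isolate three facts about the dynamics. (i) The running minimum chip of $T_v$ never exits $T_v$: the only chip leaving $T_v$ at any fire is the one $v$ sends to its parent, namely $v$'s median, which for $k\ge 2$ is never the smallest chip on $v$. (ii) Consequently, from the moment $\mu$ lies in $T_v$ it \emph{is} the running minimum of $T_v$, for otherwise a strictly smaller chip of $T_v$ would, by (i), also survive to the stable configuration, contradicting the minimality of $\mu$. (iii) A running-minimum chip can never be sent upward (it is never the median) and, each time its host fires, it is the smallest chip there and so is pushed to that host's leftmost child. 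Combining these, once $\mu$ sits at $v$ itself it can only descend, step by step, along the straight-left-descendant path of $v$ toward $d_v$.

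What remains---and what I expect to be the main obstacle---is to show that $\mu$ does reach $v$ and then completes the descent all the way to the bottom layer, landing exactly on $d_v$, rather than stalling at a higher vertex or sinking to the bottom-left corner of some vertex that is off $v$'s straight-left path. The tool I would use is the recursive structure of the waves: the first wave finalizes the bottom layer and leaves layers $1,\dots,\ell-1$ in the endgame-start configuration of a depth-$(\ell-1)$ tree, and within one wave the vertices along a straight-left path fire from top to bottom, so a subtree-minimum that is already present by $v$'s first-wave fire cascades in that same wave straight down to layer $\ell$. I would therefore induct on $\ell$, using the depth-$(\ell-1)$ statement after the first wave together with facts (i)--(iii) to force $\mu$ onto $v$'s straight-left path in time. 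By Theorem~\ref{thm:Endgame}, a chip that only enters $T_v$ on its parent's final fire becomes frozen at $v$, so the heart of the argument is to exclude exactly this late arrival; the delicate case is when $v$ is not the leftmost child of its parent, where one must compare the chip $v$ inherits from its parent with the minimum that has already sunk into $v$'s left subtree and argue the inherited chip is the larger of the two.
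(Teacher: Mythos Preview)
Your facts (i)--(iii) and the first-wave cascade are exactly what the paper uses. The difference lies in how the ``late arrival'' is ruled out. Rather than inducting on $\ell$, the paper first reduces to the case where $v$ is \emph{not} the leftmost child of its parent: if $v$ is a leftmost child, its bottom straight-left descendant coincides with its parent's, and once the proposition is known for the parent, the parent's subtree-minimum lands inside $T_v$ and is therefore also the minimum of $T_v$. With this reduction in hand, the paper argues directly that during the Endgame $v$ alternates between sending a chip to its parent and receiving one back, and---because $v$ is not leftmost---the received chip is at least as large as the chip just sent; hence no chip smaller than everything $v$ has already held can first arrive after $v$'s first Endgame fire. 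The first wave then carries the minimum all the way to layer $\ell$. This alternating comparison is precisely your ``delicate case,'' but the paper promotes it from a sub-case to the entire closing argument via the leftmost-child reduction.

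Your proposed induction on $\ell$ is a detour that does not close the gap on its own. Even granting the wave recursion you describe, the depth-$(\ell-1)$ hypothesis would place the subtree minimum at the layer-$(\ell-1)$ straight-left descendant of $v$, not the layer-$\ell$ vertex $d_v$ you actually need; so you would still have to show separately that $\mu$ already reached layer $\ell$ during wave~1---which is exactly the direct argument the paper gives. Once you have that, the induction contributes nothing. The missing organizational idea in your outline is the pass to the top straight ancestor, which lets the single send/receive comparison do all the work.
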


\begin{proof}

We only consider the case for the bottom straight left descendant, as the bottom straight right descendant case is analogous. Further, if a vertex $v_i$ was the leftmost child of its parent, we can instead prove the statement for its parent. Therefore, we only consider the cases where $v_i$ is not the leftmost child.

First, consider the smallest chip that $v_i$ contains throughout the firing process. Call this chip $c$. Since $c$ is the smallest chip in the subtree rooted at $v_i$, whenever it is fired, it will always fire to the leftmost child. Therefore, $c$ will always stay among $v_i$ and its straight left descendants.

Next, from the first move of vertex $v_i$ in the endgame, $(i, \ell-1-\text{layer}(i))$, vertex $v_i$ will alternate between sending a chip to its parent and receiving a chip from its parent. Since $v_i$ isn't the leftmost child, the chip it receives will be larger than or equal to the chip it sends back, meaning $c$ cannot reach $v_i$ for the first time after $(i, \ell-1-\text{layer}(i))$. This means that $c$ is in the subtree rooted by $v_i$ during the first wave. Since $c$ can only be fired to the left, this then means that $c$ will reach the end of the first wave, becoming the bottom straight left descendant of $v_i$.
\end{proof}

This proposition leads us to the following ``naive" bound on the number of stable configurations on the undirected $k$-ary tree when starting with $N_{k, \ell}$ chips.
\begin{prop}[Naive Bound]\label{prop:Naive}
    For $\ell > 2$, we have $Z_{k, \ell}$ and $ T_{k, \ell}$ are both at most $ (N_{k, \ell}-2)!$.
\end{prop}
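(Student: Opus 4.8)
The plan is to bound the number of stable configurations by counting the number of ways to assign the chip labels $1, 2, \dots, N_{k,\ell}$ to the $N_{k,\ell}$ vertices of the stable tree, then use the two results just established (Theorem~\ref{endingconfig} fixing the \emph{shape} of the stable configuration and Proposition~\ref{prop:smallestchip} pinning down the location of the smallest and largest chips) to cut this count down by a factor.

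First I would recall that when we start with exactly $N_{k,\ell}$ chips at the root, Theorem~\ref{endingconfig} (the $N = N_{k,\ell}$ case) tells us the stable configuration has exactly one chip on each of the $N_{k,\ell}$ vertices in layers $1$ through $\ell$. Hence every stable configuration is simply a \emph{bijection} between the chip labels $\{1, 2, \dots, N_{k,\ell}\}$ and this fixed set of $N_{k,\ell}$ vertices. Ignoring all constraints, there are at most $N_{k,\ell}!$ such bijections, which already bounds both $Z_{k,\ell}$ and $T_{k,\ell}$ (the latter counting orderings within an $\ell$-layer subtree, which sits inside the same combinatorial frame).

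The improvement from $N_{k,\ell}!$ down to $(N_{k,\ell}-2)!$ comes from Proposition~\ref{prop:smallestchip}: applied to the root $v_0$ (whose subtree is the whole tree), it forces chip $1$ to land on the bottom straight left descendant of the root and chip $N_{k,\ell}$ to land on the bottom straight right descendant of the root. Both of these are determined vertices, so the positions of chips $1$ and $N_{k,\ell}$ are not free. I would argue that this removes two degrees of freedom: once we fix where $1$ and $N_{k,\ell}$ must go, only the remaining $N_{k,\ell}-2$ chips may be distributed among the remaining $N_{k,\ell}-2$ vertices, giving at most $(N_{k,\ell}-2)!$ bijections. The same reasoning bounds $T_{k,\ell}$, since any ordering of the chips within an $\ell$-layer subtree is subject to the same forced placement of its extremal chips.

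The main thing to be careful about is the clean separation between the counting frame and the constraints: one must verify that Proposition~\ref{prop:smallestchip} genuinely \emph{determines} the vertices receiving $1$ and $N_{k,\ell}$ (rather than merely restricting them to a set), so that fixing these two labels legitimately reduces the count by exactly two factorial factors rather than by some looser amount. Since the bottom straight left and right descendants of the root are unique vertices once the tree shape is fixed, this is immediate, and the argument goes through with no genuine obstacle; the bound is deliberately crude, which is why it is called ``naive.''
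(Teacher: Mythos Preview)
Your proposal is correct and follows essentially the same approach as the paper: use Proposition~\ref{prop:smallestchip} to pin down the two extremal chips at the bottom straight left and right descendants, then count the remaining $N_{k,\ell}-2$ chips freely. The paper's proof is a two-sentence version of exactly this argument.
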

\begin{proof}
    For any undirected $k$-ary tree or any subtree of one, with $N_{k, \ell}$ labeled chips, Proposition~\ref{prop:smallestchip} guarantees the locations of the smallest and largest of those chips. Thus, ordering the remaining chips, there are at most $(N_{k, \ell}-2)!$ possible stable configurations.
\end{proof}

In \cite{inagaki2024chipfiringundirectedbinarytrees}, it was found that, naively, $Z_{2, \ell} \leq (N_{2, \ell}-5)!$. This is because, in the context of the stable configuration of the binary tree with $2^{\ell}-1$ labeled chips, chip $2$ is always at the parent of the vertex containing chip $1$ and $2^{\ell}-2$ is always in the parent of the vertex containing chip $2^{\ell}-1$ (see Lemma 3.1 of \cite{inagaki2024chipfiringundirectedbinarytrees} for a precise statement). However, for stable configurations of $k$-ary trees with $k \geq 4$,we find that the second smallest chip is not necessarily at the parent of the bottom straight left descendant of the root. For instance, as illustrated in Figure~\ref{fig:chip2_position}, chip $2$ can end up at the same level as the bottom straight left descendant of the root, and chip $N_{k, \ell}-1 = \frac{4^3-1}{4-1}-1 = 20$ can end up at the same level as the bottom straight right descendant of the root.

\begin{figure}[H]
    \centering
    \includegraphics[width=0.95\linewidth]{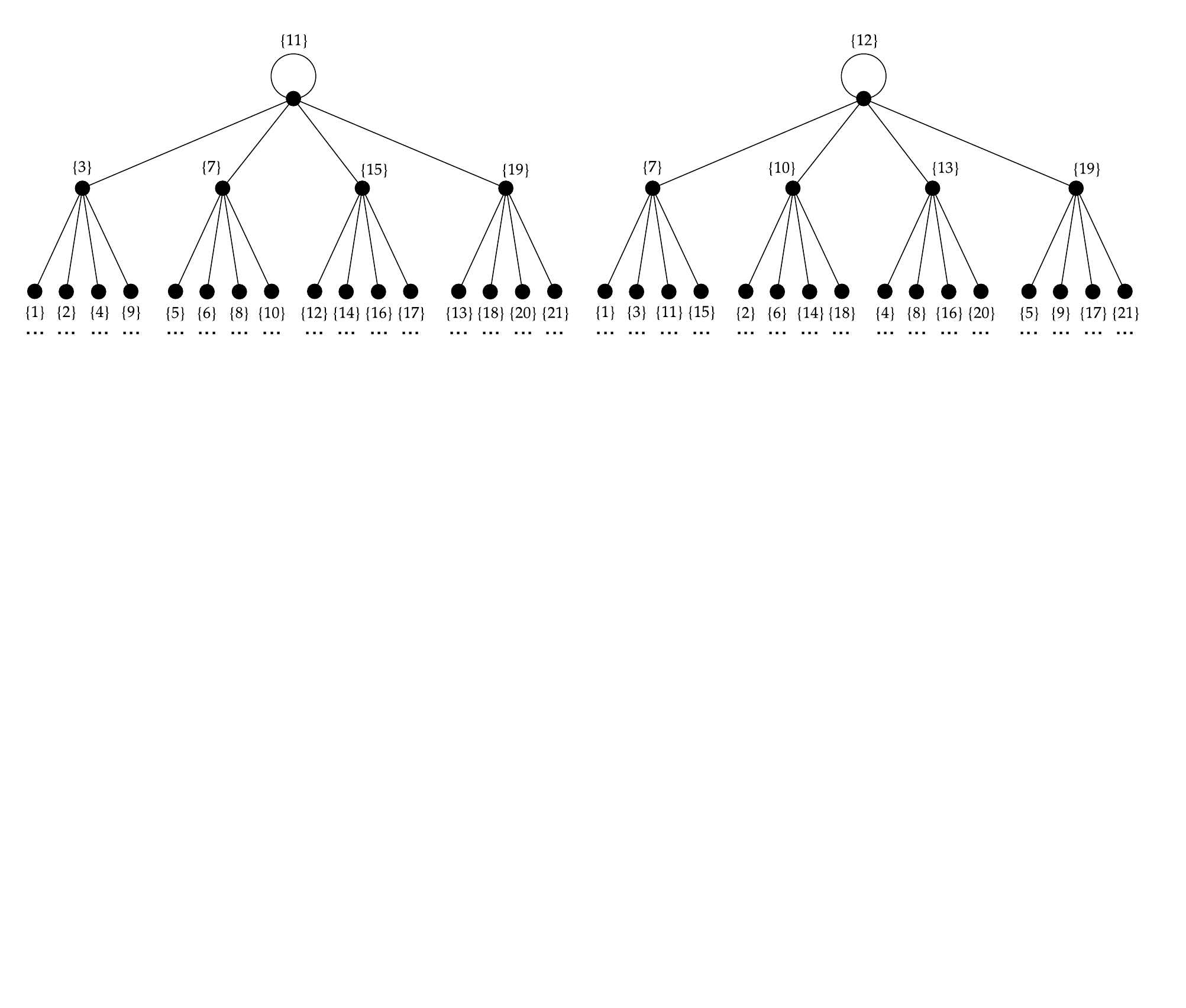}
    \caption{Two stable configurations on a $4$-ary tree. The stable configuration in the left tree is the one in Figure \ref{fig:endgameexample}, and the sequence if tuples of chips fired to reach the stable configuration in the right tree is $(1,2,3,4,5)$, $(3,6,7,8,9)$, $(7,10,11,12,13)$, $(11,14,15,16,17)$, $(15,18,19,20,21)$, $(1,3,7,11,15)$, $(2,6,10,14,18)$, $(4, 8,12,16,20)$, $(5,9,13,17,21)$, $(7,10,12,13,19)$.}
    \label{fig:chip2_position}
\end{figure}

\subsection{The Zigzag Bound}
Using properties resulting from the generalization of the Endgame, we now derive a nontrivial upper bound on the number of possible stable configurations, which we call the \textit{zigzag bound}.

\begin{definition}
    A \textit{zigzag} of length $\ell$ is a sequence of vertices $v_{a_1}, v_{a_2}, \dots, v_{a_{\ell}}$ defined as follows. Establish a starting vertex $v_{a_1}$. Then the other vertices are defined as follows. Let $a_{i+1} = ka_i+k$ if $a_i \equiv r \pmod{k}$ for $r \in \{1, 2, \dots, \lfloor k/2 \rfloor\}$, and we let $a_{i+1} = ka_{i}+1$ if $a_1 \equiv r \pmod{k}$ for some $r \in \{\lfloor k/2\rfloor+1, \lfloor k/2 \rfloor +2, \dots, k\}.$ 
\end{definition}
\begin{example}
Consider a $3$-ary tree as shown in Figure~\ref{fig:3ary_zigzag}. The vertices in the zigzag and the edges connecting them are bolded in the Figure.

\begin{figure}[H]
    \centering
    \includegraphics[width=0.25\linewidth, angle=270]{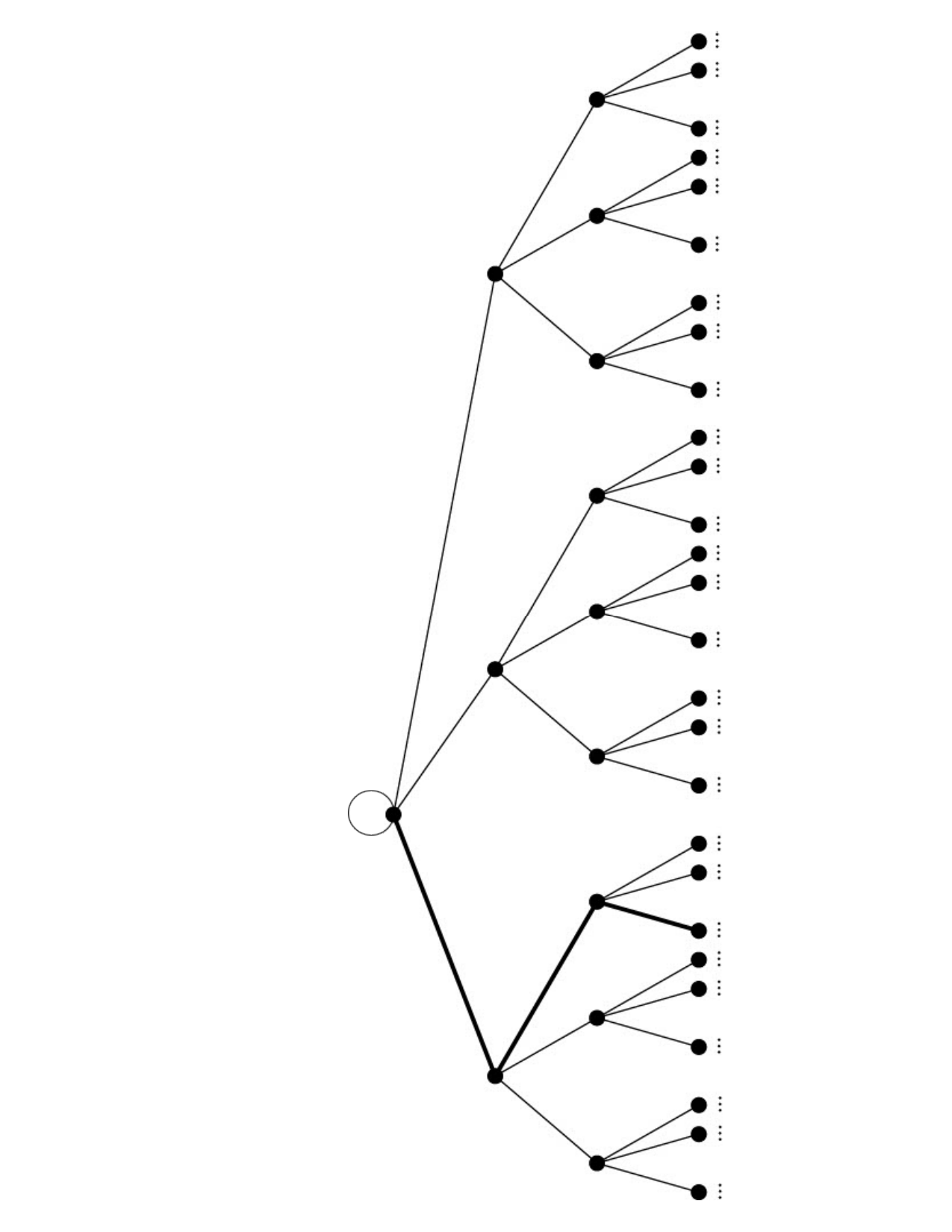}
    \caption{A $3$-ary tree with a zigzag of length 3 starting from the root.}
    \label{fig:3ary_zigzag}
\end{figure}
\end{example}

\begin{example}
    The bolded path in Figure \ref{fig:karyzigzag} is an example of a zigzag on a 4-ary tree.
\end{example}

\begin{figure}[H]
    \centering
    \includegraphics[width=0.6\linewidth]{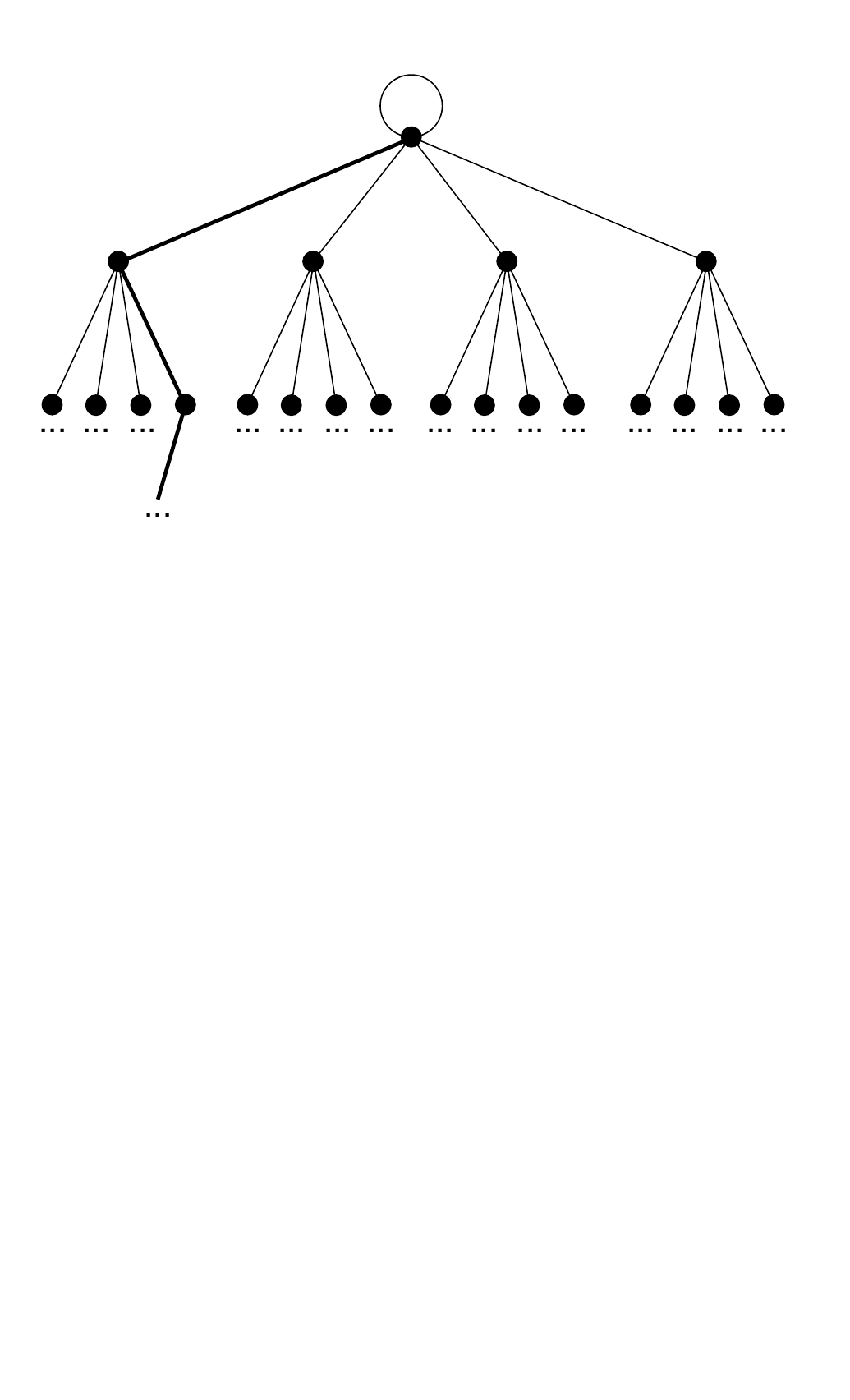}
    \caption{A 4-ary tree with a zigzag (in bold).}
    \label{fig:karyzigzag}
\end{figure}

In our proof of the zigzag bound for the $k$-ary tree starting with $N_{k, \ell}$ chips at the root, we first construct a zigzag of $\ell+1$ vertices starting at the root vertex. Then, we notice that for each vertex $v$ in the zigzag except for the last one, there are $k-1$ children of $v$ that are not in the zigzag. We therefore observe that the zigzag splits the tree into $k-1$ subtrees, each rooted at the child of the root, with $N_{k, \ell-1}$ chips, $k-1$ subtrees each rooted at the child of the second vertex, with $N_{k, \ell-2}$ chips, and so on.

\begin{example}
    In Figure~\ref{fig:3ary_zigzagpartition}, we illustrate the partitioning above for the $3$-ary tree with $N_{3, 4}$ chips at the root. Once we take a zigzag of length 4 out, we can partition the tree into two subtrees of 3 layers, two subtrees with 2 layers, and two subtrees with one layer. Each of those subtrees is rooted at children of points in the zigzag that are not in the zigzag.
    \begin{figure}[H]
    \centering
    \includegraphics[width=0.7\linewidth]{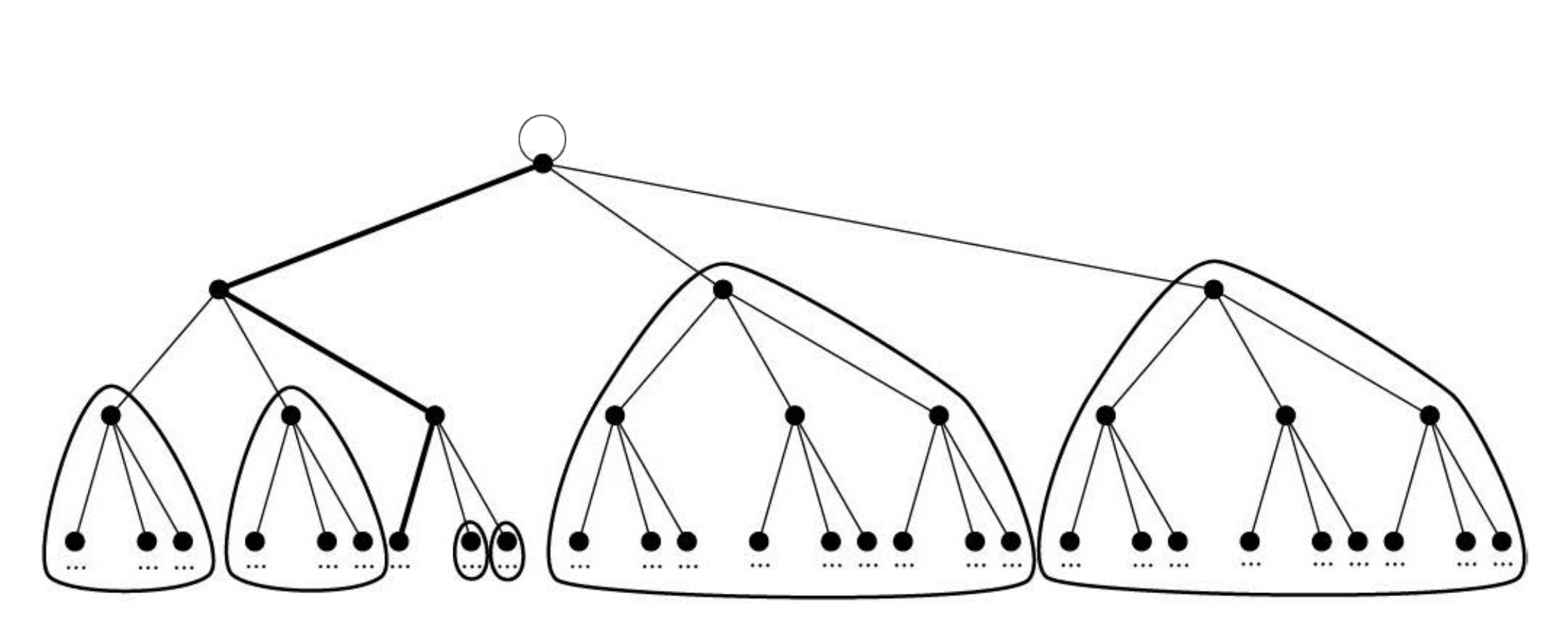}
    \caption{A partitioning of a tree starting with $N_{3, 4}$ chips at the root.}
    \label{fig:3ary_zigzagpartition}
\end{figure}
\end{example}

These zigzags have the following key property:
\begin{prop}\label{zigzagrelation}

 Let $v_s$ be a vertex and denote its label as $s$. Further, for each $i \in \{0,1, \dots\}$, let $c_i$ denote the chip at vertex $v_i$ in the stable configuration. If the vertex $v_s$ is a left child of its parent $v_{parent}$ and $v_{parent}$ is a right child of its parent, then we have the following relation $$c_{ks+1} < c_{ks+2} < \ldots < c_{ks+\lfloor k/2\rfloor } < \min \{c_s, c_{ks+\lfloor k/2 \rfloor +1}, c_{ks+\lfloor k/2 \rfloor +2}, \ldots, c_{ks+k}\}.$$ If the vertex $v_s$ is the right child of its parent $v_{parent}$ and $v_{parent}$ is a left child of its parent, we have $$c_{ks+k} > c_{ks+k-1} > \ldots > c_{ks+\lfloor k/2 \rfloor +1} > \max \{c_s, c_{ks+\lfloor k/2 \rfloor}, c_{ks+\lfloor k/2\rfloor-1}, \ldots, c_{ks+1}\}.$$

\end{prop}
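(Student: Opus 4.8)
The plan is to reduce the whole statement to a single firing of $v_s$ — the last one in time — using that confluence holds on the Endgame so that I may fix the convenient wave order. First I would record the structural consequence of Theorem~\ref{thm:Endgame} (equivalently, Endgame Confluence): for any non-bottom vertex, each of its children fires for the last time strictly between that vertex's second-to-last and last firings. Hence, once a child $v_{ks+a}$ makes its last move $(ks+a,0)$ it holds $0$ chips, all of its own descendants have already finished firing, and the only chip it can still receive is the one $v_s$ sends it at $v_s$'s last move $(s,0)$. Since in the stable configuration $v_{ks+a}$ carries exactly one chip, $c_{ks+a}$ is \emph{exactly} the chip $v_s$ sends to its $a$-th child during $(s,0)$. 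Applying the same observation one level higher, $c_s$ equals the chip $v_s$ receives from $v_{parent}$ at $v_{parent}$'s last move.

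Next I would read off the firing rule at $(s,0)$. By Theorem~\ref{thm:Endgame} the vertex $v_s$ has exactly $k+1$ chips $x_1 < x_2 < \cdots < x_{k+1}$ at that moment; the parent receives the $\lceil \frac{k+1}{2}\rceil = (\lfloor k/2\rfloor+1)$-th smallest, namely $x_{\lfloor k/2\rfloor+1}$, and the $i$-th leftmost child receives the $i$-th smallest of the remaining chips. Thus the left children $v_{ks+1},\dots,v_{ks+\lfloor k/2\rfloor}$ receive $x_1 < \cdots < x_{\lfloor k/2\rfloor}$ while the right children receive $x_{\lfloor k/2\rfloor+2},\dots,x_{k+1}$. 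Combined with the previous paragraph this yields $c_{ks+1} < \cdots < c_{ks+\lfloor k/2\rfloor}$ and shows each of these is smaller than every right-child value; notably this part needs \emph{no} hypothesis on the position of $v_s$ or $v_{parent}$. It remains only to prove $c_{ks+\lfloor k/2\rfloor} = x_{\lfloor k/2\rfloor} < c_s$.

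For this last inequality I would split on whether $v_s$ is the leftmost child of $v_{parent}$. If $v_s$ is a left but not leftmost child (possible only for $k\ge 4$), then $v_s$ is a non-leftmost child and I can invoke the exchange fact used in the proof of Proposition~\ref{prop:smallestchip}: a non-leftmost child always receives from its parent a chip at least as large as the one it last sent up. The last chip $v_s$ sends up is $x_{\lfloor k/2\rfloor+1}$ and the next (hence final) chip it receives is $c_s$, so $c_s \ge x_{\lfloor k/2\rfloor+1} > x_{\lfloor k/2\rfloor}$; here the hypothesis that $v_{parent}$ is a right child is not needed. The hard case, and the place where that hypothesis is essential, is when $v_s$ is the \emph{leftmost} child of $v_{parent}$ (this covers every binary-tree instance): then $v_{parent}$ sends $v_s$ its \emph{smallest} chip, so $c_s$ is the minimum of $v_{parent}$'s $k+1$ chips at its last fire, and received-$\ge$-sent no longer applies. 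I expect this to be the main obstacle. The plan is to show that \emph{every} chip present on $v_{parent}$ at its last fire exceeds $x_{\lfloor k/2\rfloor}$. By the level-up version of the last-fire analysis those chips are exactly the $k$ chips sent up by the children of $v_{parent}$ together with the single chip routed down by its parent $v_g$; the child $v_s$ itself contributes $x_{\lfloor k/2\rfloor+1} > x_{\lfloor k/2\rfloor}$, so the work is to control the remaining $k$ contributions. For the siblings of $v_s$ I would use that $v_s$, being leftmost, is fed the smallest chip by $v_{parent}$ at every firing, and propagate this into a dominance statement forcing each sibling's up-chip above $x_{\lfloor k/2\rfloor}$; for the chip from $v_g$ I would use precisely that $v_{parent}$ is a \emph{right} child, so $v_g$ routes a large chip to it. Establishing these two dominance facts cleanly — most plausibly by an induction along the zigzag that simultaneously tracks how small chips are driven to the left and how a right child accumulates large chips — is the crux.

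Finally, the right-child statement is the mirror image: exchanging ``smallest'' with ``largest'', ``leftmost'' with ``rightmost'', and $\min$ with $\max$ throughout, the last fire of $v_s$ sends its largest chips to the right children, giving $c_{ks+k} > \cdots > c_{ks+\lfloor k/2\rfloor+1}$ above all other values, with the analogous leftmost-type case handled by the symmetric dominance argument using that $v_{parent}$ is a left child.
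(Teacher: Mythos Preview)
Your overall line is exactly the paper's: invoke Endgame confluence to fix the wave order, identify each $c_{ks+a}$ as the chip $v_s$ sends to its $a$-th child at the last fire $(s,0)$, read off the firing rule to obtain $c_{ks+1}<\cdots<c_{ks+\lfloor k/2\rfloor}$ below every right-child value, and then compare with $c_s$ via what $v_{parent}$ returns at $(\mathrm{parent},0)$. The divergence is only at this last comparison.

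The paper does \emph{not} make your leftmost/non-leftmost case split. It handles $c_s$ in one stroke: using the hypothesis on $v_{parent}$ (the paper's line ``since vertex $v_s$ is a right child'' is an apparent slip for the hypothesis that $v_{parent}$ is a right child), it asserts that the chip $v_{parent}$ returns to $v_s$ cannot be smaller than the chip $v_s$ sent up; since the sent-up chip is the $\lceil(k+1)/2\rceil$-th smallest and hence already exceeds every left-child value $x_1,\dots,x_{\lfloor k/2\rfloor}$, so does $c_s$. That single exchange observation is the entire argument in the paper for the $c_s$ inequality.

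Your proposal instead reformulates that step into a program of bounding \emph{every} chip present at $v_{parent}$'s last fire (sibling up-chips plus the grandparent contribution), and then explicitly leaves this as an unexecuted ``crux.'' So there is a genuine gap in your write-up: the inequality $c_s>x_{\lfloor k/2\rfloor}$ is never established. You have correctly located the only nontrivial point in the proof, but you have made it look harder than the paper does and then stopped. If you want to close it in the paper's spirit, argue the monotone exchange directly for the pair $(v_s,v_{parent})$ --- that the chip $v_{parent}$ returns to $v_s$ at $(\mathrm{parent},0)$ is at least the chip $v_s$ sent up at $(s,0)$ --- rather than decomposing $v_{parent}$'s last-fire chip set and chasing each piece separately.
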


\begin{proof}

We prove the case for which the vertex is a left child, and the opposite case follows analogously.

When vertex $v_s$ fires for the last time, the chip that vertex $ks+1$ receives is less than the chips of its $k-1$ siblings. Further, since vertex $v_s$ is a right child, the chip it receives from vertex $\left \lfloor \frac {m-1} k \right \rfloor$, i.e. the parent of vertex $v_s$, cannot be smaller than the chip vertex $v_s$ sent to vertex $ks+1$, so it also cannot be smaller than the chip at vertex $ks+1$.

Further, this also holds for vertices $v_{ks+2}, v_{ks+3}, \ldots, v_{ks+\lfloor k/2\rfloor} $, as during each firing of vertex $v_s$, the chip sent to vertex $v_{\left \lfloor \frac {s-1} k \right \rfloor}$ is greater than the chips sent to vertices $v_{ks+2}, v_{ks+3}, \ldots, v_{ks+\lfloor k/2 \rfloor}$. Since vertex $v_{\left \lfloor \frac {s-1} k \right \rfloor}$ cannot send back a smaller chip, the chips at vertices $v_{ks+2}, v_{ks+3}, \ldots, v{ks+\lfloor k/2 \rfloor}$ must then be smaller than the chip at vertex $v_s$.
\end{proof}

\begin{example}
By Proposition \ref{zigzagrelation}, we know that in Figure \ref{fig:zigzagexample}, $c_{4s+1} < c_{4s+2} < \min \{c_{s}, c_{4s+3}, c_{4s+4}\}.$
\end{example}

\begin{figure}[H]
    \centering
    \includegraphics[width=0.2\linewidth]{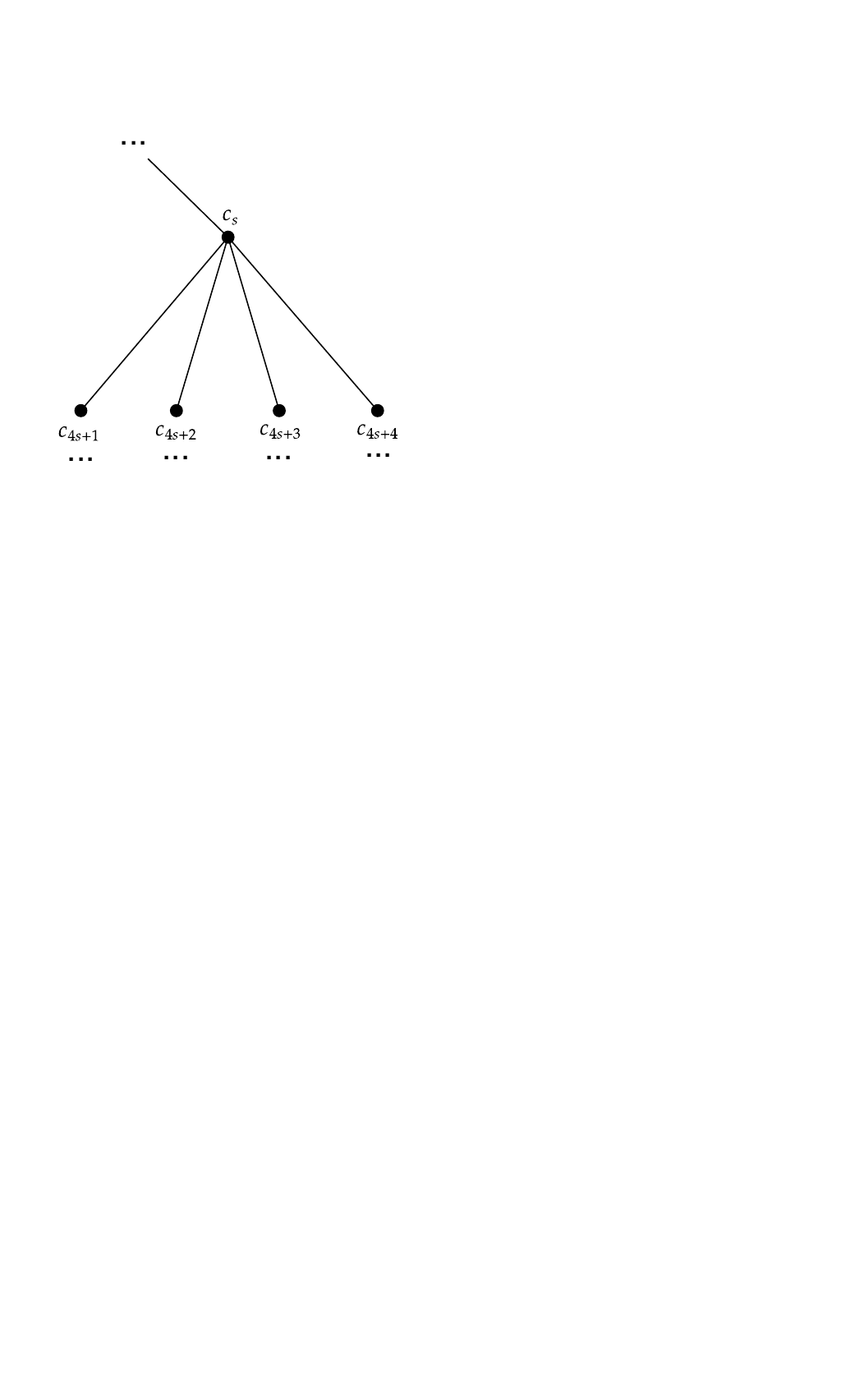}
    \caption{An example visualizing Proposition \ref{zigzagrelation} on a 4-ary tree.}
    \label{fig:zigzagexample}
\end{figure}

We now establish Lemma~\ref{zigzagrelation}, the key lemma to provide an upper bound on $Z_{k,\ell}$ and $T_{k,\ell}$.

\begin{lemma}\label{zigzagbound}

For a $k$-ary tree with $\ell \ge 3$, we have the following upper bound for $Z_{k,\ell}$ and $T_{k,\ell}$: \footnotesize{$$\binom{N_{k, \ell}-2}{\ell} \frac{(N_{k, \ell}-\ell-2)!}{(N_{k, \ell-1} - 1)!(N_{k, \ell-1})!^{k-2}(N_{k, \ell-2} - 1)!(N_{k, \ell-2})!^{k-2}(N_{k, \ell-3})!^{k-1}(N_{k, \ell-4})!^{k-1} \ldots (N_{k, 1})!^{k-1}} A_\ell \prod_{i=1}^{\ell-1} T^{k-1}_{k, i}.$$}

\end{lemma}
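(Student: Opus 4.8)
The plan is to count stable configurations (equivalently, subtree orderings) through the zigzag decomposition already set up in the text. Fix one zigzag of $\ell$ vertices starting at the root; it cuts the $N_{k,\ell}$ occupied vertices, which form an $\ell$-level tree, into the zigzag together with, for each level $j\in\{0,\dots,\ell-2\}$, exactly $k-1$ off-zigzag subtrees rooted at the non-zigzag children of the $j$th zigzag vertex. A vertex count shows each such subtree at level $j$ spans $\ell-1-j$ levels, hence has $N_{k,\ell-1-j}$ vertices, so the multiset of subtree sizes is ``$k-1$ copies of $N_{k,i}$ for each $i=1,\dots,\ell-1$'', and indeed $\ell+(k-1)\sum_{i=1}^{\ell-1}N_{k,i}=N_{k,\ell}$. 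Since every stable configuration is determined by (i) which $\ell$ chips lie on the zigzag and in what order, and (ii) which chips lie in each subtree and in what order, I will bound the number of configurations by the product of the number of admissible choices for each piece. Each valid configuration yields exactly one such tuple while the tuple-count also admits inadmissible combinations, so the product is an upper bound; moreover the argument applies verbatim to an $\ell$-layer subtree, giving the same bound for both $Z_{k,\ell}$ and $T_{k,\ell}$.

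First I would pin down the two forced chips. By Proposition~\ref{prop:smallestchip}, chip $1$ lies at the bottom straight left descendant of the root and chip $N_{k,\ell}$ at the bottom straight right descendant. Because the zigzag alternates direction, for $\ell\ge 3$ it is neither the all-left nor the all-right path, so these two corner vertices are off the zigzag; tracing the all-left and all-right paths against the zigzag's first (say, rightward) turn shows chip $1$ falls in one size-$N_{k,\ell-1}$ subtree while chip $N_{k,\ell}$ falls in one size-$N_{k,\ell-2}$ subtree, the latter being smaller precisely because the all-right path shares its first edge with the zigzag. This produces the asymmetric factors $(N_{k,\ell-1}-1)!$ and $(N_{k,\ell-2}-1)!$ in the denominator, as those two subtrees each have one vertex pre-occupied. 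Removing these two chips leaves $N_{k,\ell}-2$ chips, of which $\ell$ go on the zigzag, chosen in $\binom{N_{k,\ell}-2}{\ell}$ ways.

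Next I would bound the internal count of each piece. By Proposition~\ref{zigzagrelation}, consecutive zigzag vertices alternate as strict local minima and maxima, so the $\ell$ chosen chips admit at most $A_\ell$ arrangements along the zigzag, a single fixed parity of alternating permutation (not $2A_\ell$, since the zigzag's turns have a fixed geometric orientation). The remaining $N_{k,\ell}-\ell-2$ chips are then distributed among the subtrees as sets, and the number of set-partitions into groups matching the free-slot sizes is the multinomial $\frac{(N_{k,\ell}-\ell-2)!}{D}$, where $D$ is exactly the displayed denominator (one $(N_{k,\ell-1}-1)!$, one $(N_{k,\ell-2}-1)!$, and the ordinary factorials $(N_{k,i}!)$ for the remaining subtrees). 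Within each subtree the chips must form a valid stable ordering, of which there are at most $T_{k,i}$ for an $i$-layer subtree; for the two pre-occupied subtrees the forced corner position of their own minimum/maximum is automatic by Proposition~\ref{prop:smallestchip} applied internally, so those are still bounded by $T_{k,\ell-1}$ and $T_{k,\ell-2}$ over their free slots. Multiplying the subtree orderings gives $\prod_{i=1}^{\ell-1}T_{k,i}^{k-1}$, and assembling $\binom{N_{k,\ell}-2}{\ell}$, $A_\ell$, $\frac{(N_{k,\ell}-\ell-2)!}{D}$, and $\prod_{i=1}^{\ell-1}T_{k,i}^{k-1}$ yields exactly the claimed bound.

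The main obstacle I expect is the bookkeeping that makes the denominator come out precisely: verifying the subtree-size multiset and the identity $\ell+(k-1)\sum_{i=1}^{\ell-1}N_{k,i}=N_{k,\ell}$, and especially justifying the \emph{asymmetric} placement of chips $1$ and $N_{k,\ell}$ into subtrees of sizes $N_{k,\ell-1}$ and $N_{k,\ell-2}$ rather than both into size-$N_{k,\ell-1}$ subtrees. A secondary subtlety is confirming that the passage to an upper bound is legitimate — distinct valid configurations map to distinct tuples while many counted tuples are inadmissible — and that the alternating constraint truly contributes $A_\ell$ with the single correct parity dictated by the fixed zigzag orientation.
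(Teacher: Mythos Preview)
Your proposal is correct and follows essentially the same approach as the paper's proof: decompose via the zigzag, exclude chips $1$ and $N_{k,\ell}$ from the zigzag by Proposition~\ref{prop:smallestchip}, count $\binom{N_{k,\ell}-2}{\ell}A_\ell$ arrangements on the zigzag via Proposition~\ref{zigzagrelation}, distribute the remaining chips by a multinomial, and bound each subtree by $T_{k,i}$. Your write-up is in fact more careful than the paper's on several points---the parity of the alternating permutation, the injection justifying the inequality, and the asymmetric placement of the two extreme chips---and your choice of a rightward-first zigzag (opposite to the paper's leftward-first convention from the root) is immaterial since the denominator contains one copy each of $(N_{k,\ell-1}-1)!$ and $(N_{k,\ell-2}-1)!$ either way.
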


\begin{proof}

Consider a subtree with $N_{k, \ell}$ labeled chips at the root and a zigzag alternating between leftmost and rightmost children starting from the root.  Specifically, consider a zigzag of length $\ell$ rooted at the subtree. We begin by choosing the possible chips on the zigzag. Since $\ell \ge 3$, we know that the smallest and largest chips cannot lie on the zigzag. Therefore, we can choose the chips in the zigzag in $\binom{N_{k, \ell}-2}{\ell}$ different ways. Further, let $A_\ell$ denote the number of alternating permutations of the set $\{1, 2, \ldots, \ell\}$. Then by Proposition \ref{zigzagrelation}, there are $A_\ell$ ways to order the chips in the zigzag, meaning there are a total of $\binom{N_{k, \ell}-2}{\ell} A_\ell$ ways to select the chips on the zigzag.

Next, we consider the number of ways to order the chips on the vertices, not on the zigzag. Observe that the zigzag splits the initial subtree into $k-1$ subtrees of $\ell-1$ layers, $k-1$ subtrees of $\ell-2$ layers, $\ldots$, and $k-1$ subtrees of $1$ layer. Further, the smallest chip must end up in the leftmost subtree, and the largest chip must end up in the rightmost subtree. Therefore, instead of choosing $N_{k, \ell-2}$ and $N_{k, \ell-1}$ chips for those subtrees, respectively, we only need to pick $N_{k, \ell-2}-1$ and $N_{k, \ell-1}-1$ chips. Therefore, there are $$\frac{(N_{k, \ell}-\ell-2)!}{(N_{k, \ell-1} - 1)!(N_{k, \ell-1})!^{k-2}(N_{k, \ell-2} - 1)!(N_{k, \ell-2})!^{k-2}(N_{k, \ell-3})!^{k-1}(N_{k, \ell-4})!^{k-1} \ldots (N_{k, 1})!^{k-1}}$$ ways to split the remaining chips into the subtrees. Lastly, there are $T_{k,i}$ ways to order the chips in a subtree with $i$ layers, so there are $\prod_{i=1}^{\ell} T^{k-1}_{k, i}$ total ways to order the chips within each subtree. Multiplying all of the terms together then gives the zigzag bound.
\end{proof}

Next, we use Proposition~\ref{zigzagbound} to provide a closed form bound on $Z_{k, \ell}$ and $T_{k, \ell}$. First, we define the following expression to collapse our recursive bound into a direct bound: $$\beta_{k, \ell} = \binom{N_{k, \ell}-2}{\ell} \frac{(N_{k, \ell}-\ell-2)!}{(N_{k, \ell-1} - 1)! (N_{k, \ell-1})!^{k-2}(N_{k, \ell-2} - 1) !(N_{k, \ell-2})!^{k-2}(N_{k, \ell-3})!^{k-1} (N_{k, \ell-4})!^{k-1}\ldots (N_{k, 1})!^{k-1}} A_\ell.$$

We also establish bounds on $T_{k, 1}, Z_{k, 1}, T_{k, 2}, Z_{k, 2}$, which we shall use to establish our recursive bound:

\begin{lemma}\label{lem:small} Let $k \geq 2$. Then, 
    $Z_{k, 1} = T_{k, 1} = Z_{k, 2} = 1$ and $ T_{k, 2} \leq k-1$.
\end{lemma}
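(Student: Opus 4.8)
The plan is to dispose of the three equalities quickly and then concentrate on the single inequality $T_{k,2}\le k-1$. The cases $Z_{k,1}=T_{k,1}=1$ are immediate: with $N_{k,1}=1$ the root holds a single chip and cannot fire (firing requires $k+1\ge 3$ chips), so the initial configuration is already the unique stable configuration; likewise a one-layer subtree is a single vertex carrying one chip, which admits exactly one order pattern. For $Z_{k,2}=1$, note that $N_{k,2}=k+1$, so the root begins with exactly $k+1$ chips. It is therefore forced to fire, and since it has exactly $k+1$ chips it must select all of them; the firing rule then distributes them deterministically (the $\lceil (k+1)/2\rceil$-th smallest returns to the root through the self-loop, while the rest go to the children in increasing order). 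After this single forced firing every vertex holds one chip, so the process halts. Since the entire process is forced, the stable configuration is unique and $Z_{k,2}=1$.

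For the main inequality I would first pin down the dynamics of a two-layer subtree. Let $r$ be its root and $c_1,\dots,c_k$ its children, listed left to right; in the ambient stable configuration these children lie on the bottom layer. By Theorem~\ref{endingconfig} each $c_i$ ends with exactly one chip, and the bottom layer does not fire, since the firing vector of the process is determined by the (confluent) underlying unlabeled dynamics and assigns zero firings there. Consequently each child receives exactly one chip in total, and because every firing of $r$ delivers one chip to each child, $r$ must fire exactly once. This step is the main obstacle: one has to justify that $r$ fires exactly once and that the children's final chips are precisely the ones delivered by that single firing, which I would argue through Theorem~\ref{endingconfig} together with the invariance of the firing vector, ruling out the bottom layer pushing chips downward and back.

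With $r$ firing once, the firing rule distributes the $k$ chips not sent to the parent among $c_1,\dots,c_k$ in strictly increasing order, so in the stable configuration the children's chips satisfy $x_1<x_2<\cdots<x_k$, where $x_i$ denotes the chip on $c_i$. By Proposition~\ref{prop:smallestchip}, the bottom straight left descendant $c_1$ carries the minimum and the bottom straight right descendant $c_k$ carries the maximum chip of the subtree, so the root's chip $x_0$ satisfies $x_1<x_0<x_k$ strictly (all chips being distinct). Since the children are already in increasing order, the only remaining freedom is which of the $k-1$ gaps $(x_1,x_2),(x_2,x_3),\dots,(x_{k-1},x_k)$ contains $x_0$. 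Each choice of gap determines the full order pattern of the $k+1$ chips, so there are at most $k-1$ possible orderings, giving $T_{k,2}\le k-1$ and completing the lemma.
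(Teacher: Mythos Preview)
Your proof is correct and follows essentially the same route as the paper: dispatch the trivial equalities, then for $T_{k,2}$ use Proposition~\ref{prop:smallestchip} to pin the smallest and largest chips at $c_1$ and $c_k$, and argue that the children's chips appear in increasing order so that only the root chip's position (one of $k-1$ slots) is free. The only cosmetic difference is that you justify the increasing order by counting firings of $r$ via the confluent unlabeled firing vector, whereas the paper cites the endgame (Theorem~\ref{thm:Endgame}) to say the bottom-layer chips come from the last firing of $r$; these are equivalent once one knows the bottom layer never fires.
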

\begin{proof}
   Recall that $Z_{k, 1}, T_{k, 1}$ are the numbers of configurations/placements of chips of the $k$-ary tree and subtree of the $k$-ary tree containing $N_{k, 1} = 1$ labeled chips. There is only one way of placing them.

    We find that $Z_{k, 2} = 1$. To see this, first note that $Z_{k, 2}$ is the number of stable configurations of $N_{k, 2} = \frac{k^2 - 1}{k-1} = k+1$ chips. By the rules of firing, we obtain that the median of those $k+1$ chips will remain at the root, and all other chips will be on the children of the root and in increasing order from left to right.
    
    We now bound $ T_{k, 2}$. Let $v$ be the root of the vertex that is $v$. We obtain that the chips in the last layer of the subtree are so that 1) the smallest and largest chips in that subtree are always in the leftmost and rightmost child of the root, by Proposition~\ref{prop:smallestchip}. Because of this, we find that there are at most $\frac{k^2-1}{k-1}-2 = k-1$ potential chips that can be at the root of the subtree of 2 layers. Then note that the leftmost vertex in that layer has the smallest chip in that layer, the second leftmost vertex in that layer has the second smallest chip in that layer, and so on. To see this, one can consider the endgame described by Theorem~\ref{thm:Endgame}; here we know that the chips in the bottommost layer of the subtree were dispersed by the last firing of the root of the subtree. Therefore, we find that the number of stable configurations of the subtree with 2 layers of chips is at most the number of options for chips that can end up at the root. Therefore $T_{k, 2} \leq k-1$.
\end{proof}

Finally, we state and prove an expression for the upper bound on $Z_{k, \ell}$ and $T_{k, \ell}$, the number of configurations.

\begin{theorem}[Zigzag Bound]\label{thm:GeneralZigzagBound}

For a $k$-ary tree, we have the following upper bound for $Z_{k,\ell}$ and $T_{k,\ell}$ when $\ell \ge 3$: 

$$T_{k, \ell}\le (k-1)^{(k-1)k^{\ell-3}} \beta_{k, \ell} \prod_{i=1}^{\ell-3} \beta_{k, \ell-i}^{(k-1)k^{i-1}}.$$ and  $$Z_{k, \ell}\le (k-1)^{(k-1)k^{\ell-3}} \beta_{k, \ell} \prod_{i=1}^{\ell-3} \beta_{k, \ell-i}^{(k-1)k^{i-1}}.$$

\end{theorem}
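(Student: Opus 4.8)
The plan is to prove both bounds simultaneously by strong induction on $\ell$, unrolling the recursive inequality from Lemma~\ref{zigzagbound}. The key structural observation is that Lemma~\ref{zigzagbound} bounds both $Z_{k,\ell}$ and $T_{k,\ell}$ by the \emph{same} expression $\beta_{k,\ell}\prod_{i=1}^{\ell-1} T_{k,i}^{k-1}$, whose right-hand side involves only the $T$-quantities. Hence it suffices to establish the closed form for $T_{k,\ell}$; the identical bound for $Z_{k,\ell}$ follows at once, since its recursive upper bound is literally the same expression.

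First I would settle the base case $\ell=3$. Here the recursion reads $T_{k,3}\le \beta_{k,3}\,T_{k,1}^{k-1}\,T_{k,2}^{k-1}$, and Lemma~\ref{lem:small} supplies $T_{k,1}=1$ and $T_{k,2}\le k-1$, giving $T_{k,3}\le (k-1)^{k-1}\beta_{k,3}$. This matches the claimed formula at $\ell=3$, where the product $\prod_{i=1}^{\ell-3}$ is empty and the prefactor is $(k-1)^{(k-1)k^0}=(k-1)^{k-1}$.

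For the inductive step, assume the closed form for every $3\le m<\ell$. Substituting $T_{k,1}=1$, $T_{k,2}\le k-1$, and the inductive hypotheses for $T_{k,3},\dots,T_{k,\ell-1}$ into the recursion, I would track the total exponent of each $\beta_{k,m}$ and of the base factor $(k-1)$. Writing $e_m(\ell)$ for the exponent of $\beta_{k,m}$, the recursion gives $e_\ell(\ell)=1$ together with $e_m(\ell)=(k-1)\sum_{i=m}^{\ell-1}e_m(i)$ and $e_m(m)=1$ for $3\le m\le \ell-1$; the exponent $f(\ell)$ of $(k-1)$ obeys $f(\ell)=(k-1)+(k-1)\sum_{i=3}^{\ell-1}f(i)$ with $f(3)=k-1$.

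The main obstacle is the bookkeeping needed to solve these recurrences and confirm that they reproduce the target exponents $e_m(\ell)=(k-1)k^{\ell-m-1}$ and $f(\ell)=(k-1)k^{\ell-3}$. Both collapse through a geometric-series identity: feeding the inductive values into $\sum_{j=0}^{\ell-m-2}(k-1)k^j=k^{\ell-m-1}-1$ yields $e_m(\ell)=(k-1)(1+k^{\ell-m-1}-1)=(k-1)k^{\ell-m-1}$, and an analogous computation using $\sum_{i=3}^{\ell-1}k^{i-3}=(k^{\ell-3}-1)/(k-1)$ yields $f(\ell)=(k-1)k^{\ell-3}$. Finally, re-indexing $m=\ell-i$ converts $\prod_{m=3}^{\ell-1}\beta_{k,m}^{(k-1)k^{\ell-m-1}}$ into the stated $\prod_{i=1}^{\ell-3}\beta_{k,\ell-i}^{(k-1)k^{i-1}}$, completing the induction for $T_{k,\ell}$ and therefore for $Z_{k,\ell}$.
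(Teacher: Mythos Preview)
Your proposal is correct and follows essentially the same approach as the paper: both arguments prove the bound for $T_{k,\ell}$ by strong induction on $\ell$, invoking Lemma~\ref{zigzagbound} at each step, feeding in $T_{k,1}=1$ and $T_{k,2}\le k-1$ from Lemma~\ref{lem:small}, and collapsing the resulting nested products via the geometric-sum identity $\sum_{j=0}^{n-1}(k-1)k^j=k^n-1$. The paper carries this out by direct manipulation of the product expression, whereas you isolate the exponent recurrences $e_m(\ell)$ and $f(\ell)$ first and then solve them; this is a purely presentational difference, and your reduction of the $Z_{k,\ell}$ bound to the $T_{k,\ell}$ bound (via the common right-hand side of Lemma~\ref{zigzagbound}) is equivalent to the paper's observation that $Z_{k,\ell}\le T_{k,\ell}$.
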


\begin{proof}

We first observe that $Z_{k, \ell} \le T_{k, \ell}$. This is because any stable configuration of $N_{k, \ell}$ chips on a $k$-ary tree, which is a subtree of itself consisting of $\ell$ layers of chips in the stable configuration.

We prove this by strong induction. First, by Proposition \ref{zigzagbound}, we know that $$T_{k, 3} \le \binom{N_{k, 3}-2}{3} \frac{(N_{k, 3}-5)!}{(N_{k, 2} - 1)!(N_{k, 2})!^{k-2}(N_{k, 1} - 1)!(N_{k, 1})!^{k-2}} A_3(k-1)^{(k-1)k^{3-3}} = \beta_{k,3} (k-1)^{k-1}\prod_{i=1}^{0} \beta_{k, 3-i}^{(k-1)k^{i-1}}.$$

Next, for the inductive step, we assume that for all $j \in [3,\ell]$, we have $$T_{k, j} \le (k-1)^{(k-1)k^{j-3}}\beta_{k,j} \prod_{i=1}^{j-3} \beta_{k, j-i}^{(k-1)k^{i-1}}.$$ Next, by Proposition \ref{zigzagbound}, we then know that 
\begin{align*}
T_{k,\ell+1} & \le  \beta_{k,\ell+1} \prod_{j=1}^{\ell} T^{k-1}_{k,j} = (k-1)^{k-1} \beta_{k,\ell} \prod_{j=3}^{\ell} T^{k-1}_{k,j} \le (k-1)^{k-1}\beta_{k,\ell+1} \prod_{j=3}^{\ell} \beta^{k-1}_{k,j} (k-1)^{(k-1)^2k^{j-3}} \prod_{i=1}^{j-3} \beta_{k, j-i}^{(k-1)^2k^{i-1}} \\
& = (k-1)^{k-1 + (k-1)^2\sum_{j'=0}^{\ell-3}k^{j'}}\beta_{k, \ell+1} \prod_{m=3}^{\ell} \beta_{k,m}^{k-1+\sum_{j=m+1}^{\ell} (k-1)^2k^{j-m-1}}  \\ & = (k-1)^{k-1 + (k-1)^2\sum_{j'=0}^{\ell-3}k^{j'}} \beta_{k, \ell+1} \prod_{m=3}^{\ell} \beta_{k,m}^{k-1+\sum_{j=0}^{\ell-m-1} (k-1)^2 k^{j}} \\
& = (k-1)^{k-1 + (k-1)(k^{\ell-2}-1)} \beta_{k, \ell+1} \prod_{m=3}^{\ell} \beta_{k,m}^{k-1+(k-1)(k^{\ell-m-1}-1)}  \\ & = \beta_{k, \ell+1} \prod_{m=3}^{\ell-1} \beta_{k,m}^{(k-1)k^{\ell-m}} \\ & =  (k-1)^{(k-1)k^{\ell-2}}\beta_{k, \ell+1} \prod_{m=1}^{\ell-2} \beta_{k,\ell+1-m}^{(k-1)k^{m-1}} \end{align*}
\end{proof}

For fixed $k$, we define the following sequence signifying the zigzag bound: \[T_{k, zig}(\ell)  = Z_{k, zig}(\ell) = (k-1)^{(k-1)k^{\ell-3}} \beta_{k, \ell}\prod_{i=1}^{\ell-3}\beta_{k, \ell-i}^{(k-1)k^{i-1}}.\]

Like Theorem 3.12 of \cite{inagaki2024chipfiringundirectedbinarytrees}, we now prove that the zigzag bound is asymptotically better than that from Proposition~\ref{prop:Naive}.
\begin{theorem}
    For $\ell \geq 4$ and $k \geq 2$, $T_{k, zig}(\ell) < (N_{k, \ell}-3)!$.
\end{theorem}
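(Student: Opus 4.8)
The plan is to show that the zigzag bound $T_{k,\mathrm{zig}}(\ell)$ is asymptotically smaller than the naive bound $(N_{k,\ell}-3)!$ by comparing the two quantities factorial-by-factorial. The key observation is that $T_{k,\mathrm{zig}}(\ell)$, when unwound via its definition through $\beta_{k,\ell}$, is essentially a product of multinomial coefficients times Euler zigzag numbers times powers of $(k-1)$, whereas $(N_{k,\ell}-3)!$ is a single large factorial. Since $\beta_{k,\ell}$ contains a factor of $(N_{k,\ell}-\ell-2)!$ divided by factorials of the subtree sizes $N_{k,i}$, the whole bound is at its heart a multinomial coefficient distributing roughly $N_{k,\ell}$ chips among many subtrees of sizes $N_{k,i} \le N_{k,\ell-1}$, and multinomials are much smaller than the full factorial when the parts are small relative to the whole.

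First I would set up the comparison by dividing both sides, aiming to prove $T_{k,\mathrm{zig}}(\ell)/(N_{k,\ell}-3)! < 1$. I would use the recursive structure already exploited in Theorem~\ref{thm:GeneralZigzagBound}: by Lemma~\ref{zigzagbound}, $T_{k,\mathrm{zig}}(\ell) \le \beta_{k,\ell}\prod_{i=1}^{\ell-1} T_{k,i}^{k-1}$, so an induction on $\ell$ is natural. For the base case $\ell = 4$ I would verify the inequality directly (possibly splitting off the smallest case $k=2$, which is already covered by Theorem 3.12 of \cite{inagaki2024chipfiringundirectedbinarytrees}, and handling $k \ge 3$ by a crude bound on the single factor $\beta_{k,4}$ against $(N_{k,4}-3)!$). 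For the inductive step, assuming $T_{k,\mathrm{zig}}(i) < (N_{k,i}-3)!$ for all $3 < i \le \ell$, I would bound $\prod_{i=1}^{\ell} T_{k,i}^{k-1}$ using the inductive hypothesis together with Lemma~\ref{lem:small} for the small cases $i \in \{1,2,3\}$, and then show that $\beta_{k,\ell+1}$ times this product is dominated by $(N_{k,\ell+1}-3)!$.

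The heart of the argument is the factorial bookkeeping in $\beta_{k,\ell+1}$. I would write $\beta_{k,\ell+1}$ out explicitly and group the numerator $(N_{k,\ell+1}-\ell-3)!$ together with the factorials $(N_{k,j}-3)!$ coming from the inductive hypothesis, recognizing the result as (a constant times) a multinomial coefficient of the form
\[
\binom{N_{k,\ell+1}-O(\ell)}{N_{k,\ell},\,N_{k,\ell},\,\dots,\,N_{k,\ell-1},\,\dots},
\]
where the parts are the subtree sizes with multiplicities $k-1$. Since $\sum_i (k-1)N_{k,i} \le N_{k,\ell+1}$ and each part is at most $N_{k,\ell} = N_{k,\ell+1}/k + O(1)$, this multinomial is a vanishingly small fraction of $(N_{k,\ell+1}-3)!$; the gap grows doubly-exponentially in $\ell$, which absorbs the polynomial factor $\binom{N_{k,\ell+1}-2}{\ell+1}$, the subexponential Euler number $A_{\ell+1}$, and the power-of-$(k-1)$ prefactor with room to spare.

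The main obstacle I anticipate is controlling the accumulated prefactors—the binomial $\binom{N_{k,\ell}-2}{\ell}$, the Euler zigzag numbers $A_\ell$, and the powers of $(k-1)$—uniformly in both $k$ and $\ell$, since these proliferate across the recursion. I would handle them with coarse but uniform estimates: $A_\ell \le \ell!$, $\binom{N_{k,\ell}-2}{\ell} \le N_{k,\ell}^\ell$, and a clean bound on the exponent $(k-1)k^{\ell-3}$ appearing in the power of $(k-1)$, checking in each case that the ratio of a part-factorial to the corresponding slice of $(N_{k,\ell}-3)!$ beats these factors. The delicate point is ensuring the base case and the transition $\ell \to \ell+1$ both leave enough slack that no prefactor ever overtakes the multinomial savings; I would verify the slack is monotone increasing in $\ell$ so that a single base-case check at $\ell = 4$ suffices.
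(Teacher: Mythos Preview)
Your proposal is correct and takes essentially the same approach as the paper: induction on $\ell$ with a direct base-case check at $\ell=4$, and an inductive step that unwinds the recursion for $T_{k,\mathrm{zig}}(\ell+1)$, applies the inductive hypothesis together with the crude bound $A_\ell < \ell!$ and a separate estimate for the $j=3$ term, and then verifies the resulting factorial ratio is below $1$. The only small slip is that Lemma~\ref{lem:small} handles $i\in\{1,2\}$ but not $i=3$; the paper fills that gap with the naive estimate $T_{k,\mathrm{zig}}(3)=(k-1)^{k-1}\beta_{k,3}<(N_{k,3}-2)!$, which slots cleanly into your plan.
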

\begin{proof}
    We do a proof by induction. First observe that for $\ell=4$, 
\begin{align*}
& Z_{k, zig}(4) \\
\ = \ & \left(\frac{(N_{k, 4}-2)!}{4! (N_{k, 3}-1)!((N_{k, 3})!)^{k-2} (N_{k, 2}-1)! (N_{k, 2}!)^{k-2}} A_4\right) \left(\frac{(N_{k, 3}-2)!}{3! (N_{k, 2}-1)! (N_{k, 2}!)^{k-2}} A_3\right)^{k-1}(k-1)^{(k-1)k} \\
\ < \ & \frac{(k^3+k^2+k-1)!}{(k^2+k)!(k^2+k+1)!^{k-2}(k!)(k+1)!^{k-2}} \left(\frac{(k^2+k-1)!}{(k!) (k+1)!^{k-2}}\right)^{k-1} (k-1)^{(k-1)k} \\
\ = \ & \frac{(k^3+k^2+k-1)! (k^2+k-1)!}{(k^2+k)! (k!)^k ((k+1)!)^{k^2-2k}} \left(\frac{(k-1)^{k(k-1)}}{((k^2+k+1)(k^2+k))^{k-2}}\right) \\
\ < \ & \frac{(k^3+k^2+k-1)! (k^2+k-1)!}{(k^2+k)! (k!)^k ((k+1)!)^{k^2-2k}}  \left(\frac{(k-1)^{k(k-1)}}{k^{4(k-2)}}\right) \\ 
\ = \ & \frac{(k^3+k^2+k-1)!}{(k^2+k)(k!)^k((k+1)!)^{k^2-2k}} \cdot \frac{(k-1)^{k(k-1)}}{k^{4k-8}} \\ 
\ \leq \ & \frac{(k^3+k^2+k-1)!}{(k^2+k)(k!)^2} \left(\frac{(k-1)^{k(k-1)}}{k^{4k-8}k^{2k^2-4k}}\right) \leq \frac{(k^3+k^2+k-1)!}{(k^2+k)k^2} \\
\ = \ & \left(\frac{(k^3+k^2+k-1)}{k^2 (k^2+k)}\right) (k^3+k^2+k-2)! \\
\ < \ & \frac{2}{k} (k^3+k^2+k-2)! = (N_{k, 4}-3)!   
\end{align*}

Here we used that for any $\ell \geq 3$, $A_{\ell} < \ell!$. This proves the base case of the induction. 
    
    Now we prove the inductive step. Observe that by definition, $A_{\ell} < \ell!$. We derive that 
    \begin{equation*}\begin{split}
        & Z_{k, zig}(\ell+1)   \\ & = (k-1)^{k(k-1)}\binom{N_{k, \ell+1}-2}{\ell+1} \frac{A_{\ell+1} \cdot  (N_{k, \ell+1} -\ell -3)! \prod_{j=3}^{\ell} (T_{k, zig}(\ell))^{k-1} }{(N_{k, \ell}-1)!(N_{k, \ell}!)^{k-2}(N_{k, \ell-1}-1)!(N_{k, \ell-1}!)^{k-2} (N_{k, \ell-2})!^{k-1} (N_{k, \ell-3}!)^{k-1}\dots (N_{k, 1})!^{k-1}}  \\ & < \frac{(N_{k, \ell+1}-2)! (N_{k, 3}-2)! }{(N_{k, \ell}-1)!(N_{k, \ell}!)^{k-2}(N_{k, \ell-1}-1)!(N_{k, \ell-1}!)^{k-2} (N_{k, \ell-2}!)^{k-1} (N_{k, \ell-3}!)^{k-1}\dots (N_{k, 1}!)^{k-1}} \left(\prod_{j=4}^{\ell}(N_{k, j}-3)!\right)^{k-1}\end{split}.
    \end{equation*} Here, we use the inductive hypothesis in the last step along with the fact that $T_{k, zig}(3) = \beta_{k, 3} = A_3 \binom{N_{k, 3}-2}{3}\frac{1}{((N_{k, 2}-1)!)(N_{k, 2}!)^{k-2}(N_{k, 1}-1)! (N_{k, 1}!)^{k-2}}(k-1)^{k-1} < (N_{k, 3}-2)!$. 
    
    From there, we obtain \begin{equation*}\begin{split}
        & Z_{k, zig}(\ell+1)   \\&  < \frac{(N_{k, \ell+1}-2)! ((N_{k, 3}-2)!)^{k-1} (k-1)^{k-1} }{(N_{k, \ell}-1)!(N_{k, \ell}!)^{k-2}(N_{k, \ell-1}-1)!(N_{k, \ell-1}!)^{k-2} (N_{k, \ell-2}!)^{k-1} (N_{k, \ell-3}!)^{k-1}\dots (N_{k, 1}!)^{k-1}} \left(\prod_{j=4}^{\ell} (N_{k, j}-3)!\right)^{k-1} \\ & \leq \frac{(N_{k, \ell+1}-2)!}{(N_{k, \ell}-1)!(N_{k, \ell}!)^{k-2}} (k-1)^{k-1} \\ & \leq \frac{(N_{k, \ell+1}-2)!}{k^{k^{\ell-1}+k^2-k}}(k-1)^{k-1} \\ & \leq (N_{k, \ell+1}-3)!\end{split}.
    \end{equation*} with the penultimate inequality being a result of the fact that $\ell \geq 4$ and $k\geq 2$.
\end{proof}

\begin{example}
    Using Proposition~\ref{prop:Naive} and Theorem~\ref{thm:GeneralZigzagBound}, we compute upper bounds on $Z_{4, \ell}$ for $\ell = 3$, $4$, and $5$. They are displayed in Table~\ref{tab:Upperbdk4}.

    \begin{table}[H]
    \centering
    \begin{tabular}{|c|c|c|}
        \hline  $\ell$ & Naive Upper bound on $Z_{4, \ell}$ (Prop.~\ref{prop:Naive}) & The upper bound on $Z_{4, \ell}$ from Thm.~\ref{thm:GeneralZigzagBound} \\
         \hline 3 & 121645100408832000  & 3167841156480 \\ \hline 4 & $\approx 3.9\times 10^{124}$ & $ \approx 3.2  \times 10^{99}$  \\ \hline 5 & $\approx 1.5\times 10^{712}$ &  $\approx 2.0 \times 10^{601}$  \\ \hline
    \end{tabular}
    \caption{Upper bounds on $Z_{4, \ell}$ for $\ell=3, 4, 5$.}
    \label{tab:Upperbdk4}
\end{table}
\end{example}

\begin{remark*}
    Although the proofs of Theorem~\ref{thm:BinaryIKL} and Theorem~\ref{thm:GeneralZigzagBound} are similar, the bound on $Z_{2,\ell}$ obtained from the former theorem is not as tight as the one obtained from the latter theorem. To demonstrate this, for $k=2$, the case of binary trees, we compute Table~\ref{tab:ComparingBounds}, which displays the upper bounds on the number of configurations obtained from those Theorems.
    \begin{table}[H]
        \centering
        \begin{tabular}{|c|c|c|}\hline
             $\ell$  &  Upper Bound on $Z_{2, \ell}$ From Thm.~\ref{thm:BinaryIKL}& Upper Bound on $Z_{2, \ell}$ From Thm.~\ref{thm:GeneralZigzagBound} \\ \hline 4   & 693,000 & 18,018,000  \\ \hline 5
             & $\approx 2.9\times 10^{22}$  & $\approx 1.1\times10^{24}$ \\ \hline 6  & $\approx 1.8\times 10^{65}$ & $\approx 2.5 \times 10^{67}$ \\ \hline  7& $\approx 1.5 \times 10^{170}$   & $\approx 3.1 \times 10^{173}$  \\ \hline
        \end{tabular}
        \caption{Upper bounds on $Z_{2, \ell}$ from Theorems~\ref{thm:GeneralZigzagBound} and \ref{thm:BinaryIKL} for $\ell=4, 5, 6, 7$.}
        \label{tab:ComparingBounds}
    \end{table}
\end{remark*}
\section{Lower Bounds on the Number of Stable Configurations}\label{sec:LowerBounds}
Finally, we provide lower bounds for $Z_{k, \ell}$ through direct constructions.

For this, we introduce some notations. Given $N_{k, \ell}$ chips at the root of a $k$-ary tree, define a \textit{stationary chip} as a designated chip that does not leave the root during the stabilization. We define a \textit{uneasy chip} to be any chip $c$ that 1) goes to one of the left children of the root and is larger than the stationary chip or 2) goes to one of the right children of the root and is smaller than the stationary chip. We say that a firing procedure fires \textit{symmetrically} if every firing of the root involves the stationary chip, the stationary chip remains at the root after each firing, and every chip fired to the root is sent back to the child of the root from which it came.

\begin{figure}[h]
    \centering
    \includegraphics[width=0.65\linewidth]{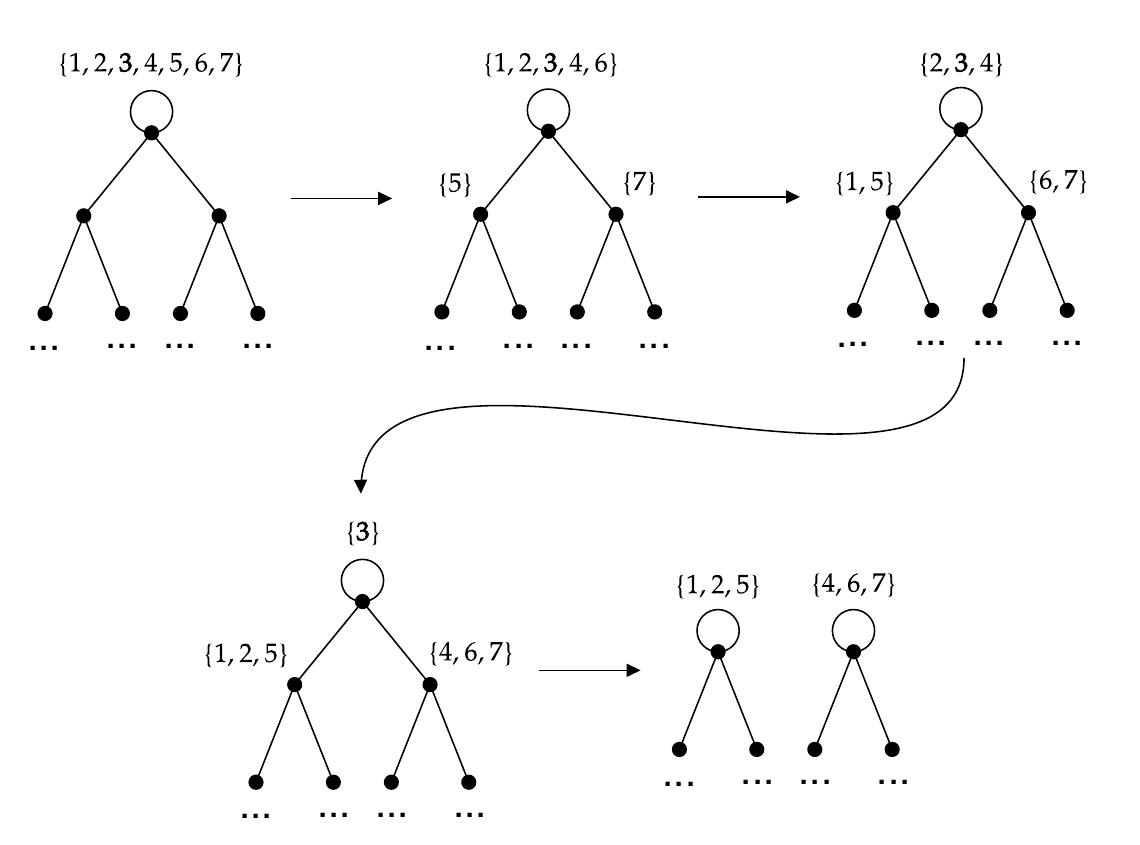}
    \caption{A visualization of $(Z_{2, 3})^2$ possible stable configurations with $m-1 = 3$ as the stationary chip and 5 as the uneasy chip on a binary tree with three layers in the stable configuration.}
    \label{fig:lower_bound_construction}
\end{figure}

\begin{example}
    Consider starting with chips $1,2, \dots, 7$ in an undirected binary tree as in Figure~\ref{fig:lower_bound_construction}. As in the figure, we fire first the triple $\{5, 6, 7\}$, then $\{1, 3, 6\}$, and afterwards $\{2, 3, 4\}$ from the root. Then we fire $1,2,5$ from the left child of the root, and we fire $4, 6, 7$ from the right child of the root. Finally, we fire $2,3, 6$ from the root. 

    In this stabilization, we say that the chip $3$ is the stationary chip, since it stays at the root vertex. We say that chip $5$ is an uneasy chip, since it is bigger than the stationary chip, chip 3, and is sent to the subtree rooted at the left child of the root.
\end{example}

\begin{lemma}\label{lem:lowerbound}
For positive integers $k \geq 2$ and $\ell \geq 3$, we have the following lower bound for $Z_{k,\ell}$:$$Z_{k, \ell} \geq \sum_{i=0}^{\lfloor \frac{k}{2} \rfloor}\binom{(N_{k, \ell}-1) \cdot \frac{\lfloor \frac{k}{2} \rfloor}{k} -1-\lfloor \frac{k}{2}  \rfloor}{i} \binom{(N_{k, \ell}-1) \cdot \frac{\lceil \frac{k}{2} \rceil}{k} -1-2\lceil \frac{k}{2}  \rceil+i}{i}Z_{k, \ell-1}^{k}$$
\end{lemma}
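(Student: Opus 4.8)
The plan is to give a direct construction that turns stable configurations of the $k$ subtrees hanging off the root into many distinct stable configurations of the whole tree, thereby relating $Z_{k,\ell}$ to $Z_{k,\ell-1}$. First I would fix a \emph{stationary chip} near the median value and fire the root \emph{symmetrically}: at each root firing we select $k+1$ chips having the stationary chip as the $\lceil (k+1)/2\rceil$-th smallest, so that chip returns to the root through the self-loop while the $\lfloor k/2\rfloor$ smaller selected chips descend into the left subtrees and the $\lceil k/2\rceil$ larger ones into the right subtrees. The defining property of symmetric firing---that any chip later sent up to the root from a child is immediately returned to that same child---makes the root behave as a reflector, so each of the $k$ child subtrees, once it holds its $N_{k,\ell-1}$ chips, evolves exactly as an isolated looped $k$-ary tree on $N_{k,\ell-1}$ chips. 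Consequently each subtree can be driven independently to each of its $Z_{k,\ell-1}$ stable configurations, producing the factor $Z_{k,\ell-1}^{k}$ for every fixed way of routing the chips into the subtrees.

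The second step is to count the routings realizable by symmetric firing. With the stationary chip at the balance point, the plain symmetric firing sends the smallest $\lfloor k/2\rfloor N_{k,\ell-1}$ chips left and the largest $\lceil k/2\rceil N_{k,\ell-1}$ chips right, giving the $i=0$ term. Additional routings come from \emph{uneasy chips}: I would show that one may divert $i$ chips larger than the stationary chip into the left subtrees while compensating with $i$ chips smaller than the stationary chip diverted into the right subtrees, preserving the capacities $\lfloor k/2\rfloor N_{k,\ell-1}$ and $\lceil k/2\rceil N_{k,\ell-1}$. Counting the admissible large-chip diversions as $\binom{(N_{k,\ell}-1)\frac{\lfloor k/2\rfloor}{k}-1-\lfloor k/2\rfloor}{i}$ and, given those, the admissible small-chip diversions as $\binom{(N_{k,\ell}-1)\frac{\lceil k/2\rceil}{k}-1-2\lceil k/2\rceil+i}{i}$, then summing over $0\le i\le\lfloor k/2\rfloor$, reproduces the bracketed factor. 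Here the two $-1$'s are forced by Proposition~\ref{prop:smallestchip}, which pins the global smallest and largest chips to the leftmost and rightmost bottom descendants and so removes them from the pools.

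To legitimately sum over $i$ and multiply by $Z_{k,\ell-1}^{k}$, I would check distinctness: distinct realizable routings put different chip-sets into the subtrees and hence give different global stable configurations, while for a fixed routing the $Z_{k,\ell-1}^{k}$ independent internal stabilizations are pairwise distinct because the global configuration determines each subtree's internal arrangement. The hypotheses $\ell \ge 3$ and $k \ge 2$ guarantee the subtrees have at least two layers and that all binomial arguments are nonnegative.

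The main obstacle is the realizability analysis behind the two binomial coefficients. One must verify that each claimed routing is produced by a genuinely \emph{legal} symmetric firing sequence---that the firing vertex always holds at least $k+1$ chips and that the stationary chip can be kept as the median throughout, even as chips bounce between the root and its children---and, more delicately, pin down exactly which large chips may be diverted left and which small chips may then be diverted right. Getting the eligible pool sizes correct, in particular the $+i$ correction in the second binomial that records the positions freed up by the left diversions, is the crux; once the reflector property is in hand, the factor $Z_{k,\ell-1}^{k}$ and the distinctness argument are comparatively routine.
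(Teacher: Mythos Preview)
Your overall strategy coincides with the paper's: fix a stationary chip $m=(N_{k,\ell}-1)\lfloor k/2\rfloor/k+1$, use two initial root firings to plant $i$ uneasy chips on each side, then fire symmetrically so the root acts as a reflector and each of the $k$ subtrees stabilizes independently, giving the factor $Z_{k,\ell-1}^k$; distinctness across routings and across internal stabilizations is handled exactly as you describe.

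Two bookkeeping points are inverted, however, and would derail the realizability check if you carried them through. First, the two binomial coefficients are swapped in your reading: the paper's first binomial $\binom{(N_{k,\ell}-1)\lfloor k/2\rfloor/k-1-\lfloor k/2\rfloor}{i}$ counts the \emph{small} chips $c_1<\cdots<c_i<m$ diverted \emph{right} in the first root firing, while the second binomial counts the \emph{large} chips $c_1'<\cdots<c_i'>m$ diverted \emph{left} in the second firing; for odd $k$ the two pool sizes differ, so the assignment matters. The $+i$ correction then has a concrete meaning: because the first firing filled $i$ of the right-going slots with small chips $c_j$ rather than large ones, $i$ extra large chips remain on the root and enlarge the admissible range for the $c_j'$'s. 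Second, the $-1$'s are not forced by Proposition~\ref{prop:smallestchip}. The pool for the $c_j$'s is $\{\lfloor k/2\rfloor+2,\dots,m-1\}$ because chips $1,\dots,\lfloor k/2\rfloor$ occupy the left-going slots of the first firing and chip $\lfloor k/2\rfloor+1$ is the one sent to the self-loop; the pool for the $c_j'$'s starts at $m+1$ and stops below the block of $2\lceil k/2\rceil-i$ largest chips that fill the right-going slots of the two firings. Once you straighten out these two points, the legality verification you flag as the crux goes through by exhibiting the two explicit firing tuples and checking that exactly $(N_{k,\ell}-1)\lfloor k/2\rfloor/k-2\lfloor k/2\rfloor$ chips below $m$ and $(N_{k,\ell}-1)\lceil k/2\rceil/k-2\lceil k/2\rceil$ chips above $m$ remain on the root, so symmetric firing can proceed.
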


\begin{proof}

Consider the chip with label $m = (N_{k, \ell}-1)\frac{\lfloor \frac{k}{2} \rfloor}{k}+1$. We provide a construction where $m$ is the stationary chip after two initial fires of the root vertex.

We now propose the following firing procedure. Fix $i \in [0, \lfloor \frac k 2 \rfloor ]$. Then,
\begin{enumerate}
    \item Pick $i$ chips $c_1, c_2, \dots, c_i$ among $\lfloor \frac{k}{2} \rfloor+2, \lfloor \frac{k}{2} \rfloor+3, \dots, (N_{k, \ell}-1)\frac{\lfloor \frac{k}{2} \rfloor}{k}$ such that $c_1 < c_2 < \dots < c_i$.
    \item From the root, fire the tuple of $k+1$ chips $(1, 2, \dots, \lfloor \frac{k}{2} \rfloor,\lfloor \frac{k}{2} \rfloor+1, c_1, c_2, \dots, c_i, N_{k, \ell} -\lceil \frac{k}{2} \rceil +i +1, N_{k, \ell} -\lceil \frac{k}{2} \rceil +i +2,  \dots, N_{k, \ell})$. For each $j \in \{1, 2, \dots, i\}$, this sends $c_i$ to the $\lfloor \frac{k}{2} \rfloor+i$th leftmost child of the root as an uneasy chip.
    \item Pick $i$ chips $c_1', c_2', \dots, c_i'$ among $(N_{k, \ell}-1)\frac{\lfloor \frac{k}{2} \rfloor}{k}+2, (N_{k, \ell}-1)\frac{\lfloor \frac{k}{2} \rfloor}{k}+3, \dots, N_{k, \ell} - 2\lceil\frac{k}{2} \rceil+i-1$ so that $c_1' < c_2' < \dots < c_i'$.) 
    \item Consider the $\lfloor\frac{k}{2} \rfloor -i$ smallest chips among $\{1, 2, \dots, m-1\} \setminus \{1,2, \dots, \lfloor  \frac{k}{2}\rfloor, c_1, c_2, \dots, c_i\}$. Denote them as $d_1, d_2, \dots, d_{\lfloor\frac{k}{2} \rfloor -i}$ and assume without loss of generality that they are listed in increasing order. Fire tuple of chips $(d_1, d_2, \dots, d_{\lfloor \frac{k}{2} \rfloor - i}, c'_1, c'_2, \dots, c'_i, N_{k, \ell} - 2\lceil \frac{k}{2} \rceil+i, N_{k, \ell} - 2\lceil \frac{k}{2} \rceil +i+1, \dots, N_{k, \ell} - \lceil \frac{k}{2} \rceil+i)$ from the root. For each $j \in \{1, 2, \dots, i\}$, this introduces $c_j'$ to the $\lfloor \frac{k}{2} \rfloor - i + j$th leftmost subtree as an uneasy chip.
    \item  Observe that at this stage there are exactly $(N_{k, \ell}-1)\frac{\lfloor \frac{k}{2} \rfloor}{k} - (\lfloor \frac{k}{2} \rfloor+i +\lfloor \frac{k}{2}\rfloor - i)$ chips less than $m$ on the root, and there are exactly $(N_{k, \ell}-1)\frac{\lceil \frac{k}{2} \rceil}{k} - (\lceil \frac{k}{2} \rceil-i +\lceil \frac{k}{2}\rceil + i)$ chips greater than $m$ on the root vertex. Thus, the ratio of the number of chips less than $m$ over the number of chips greater than $m$ is exactly $\frac{\lfloor \frac{k}{2} \rfloor}{\lceil \frac{k}{2} \rceil}$, we can symmetrically fire. Fire the root symmetrically until we have $N_{k, \ell-1}$ chips on each child of the root. 
    \item Symmetrically fire until we reach a stable configuration. 
\end{enumerate}

Since we fire symmetrically until we reach a stable configuration, we can treat the chip-firing game in Step 6 as $k$ separate $k$-ary trees with self-loop and the $N_{k, \ell-1}$ labeled chips.

Now, for each $i \in \{0, 1, \dots, \lfloor \frac k 2 \rfloor\}$, we find a lower bound on the number of chip configurations that can result from the above procedure. First, observe that there are $\binom{(N_{k, \ell}-1) \cdot \frac{\lfloor \frac{k}{2} \rfloor}{k} -1-\lfloor \frac{k}{2}  \rfloor}{i}$ ways to choose $c_1, c_2, \dots, c_i$ as in step 1 of the above procedure. Then, observe that there are $\binom{(N_{k, \ell}-1) \cdot \frac{\lceil \frac{k}{2} \rceil}{k} -1-2\lceil \frac{k}{2}  \rceil+i}{i}$ ways to choose $c_1', c_2', \dots, c_i'$ as in step 3 of the above procedure. From symmetrically firing the root, we find that at least one stable configuration can be obtained such that for each $j \in \{1, 2, \dots, i\}$
\begin{itemize}
 \item Chip $c_j$, as an uneasy chip on the $\lfloor \frac{k}{2} \rfloor + j$th leftmost child of the root, is at the subtree rooted at that child of the root. This is because all other chips that are fired to the $\lfloor \frac{k}{2} \rfloor + j$th leftmost child of the root (or at the subtree rooted at the child) are greater than $m$ whereas $c_j < m$.
    \item Chip $c_j'$, as an uneasy chip on the $\lfloor \frac{k}{2} \rfloor - i + j$th leftmost child of the root, is at the subtree rooted at that child of the root. This is because all other chips that are fired to the $\lfloor \frac{k}{2} \rfloor -i+ j$th leftmost child (or at the subtree rooted at the child) of the root are less than than $m$ whereas $c_j' > m$.
\end{itemize} We therefore obtain that the number of stable configurations resulting from the procedure above is at least $(\binom{(N_{k, \ell}-1) \cdot \frac{\lfloor \frac{k}{2} \rfloor}{k} -1-\lfloor \frac{k}{2}  \rfloor}{i} \binom{(N_{k, \ell}-1) \cdot \frac{\lceil \frac{k}{2} \rceil}{k} -1-2\lceil \frac{k}{2}  \rceil+i}{i})Z_{k, \ell-1}^{k}$. Now, summing this over all $i$, we obtain that the number of possible stable configurations resulting from the procedure above is $$\sum_{i=0}^{\lfloor \frac{k}{2} \rfloor}\binom{(N_{k, \ell}-1) \cdot \frac{\lfloor \frac{k}{2} \rfloor}{k} -1-\lfloor \frac{k}{2}  \rfloor}{i} \binom{(N_{k, \ell}-1) \cdot \frac{\lceil \frac{k}{2} \rceil}{k} -1-2\lceil \frac{k}{2}  \rceil+i}{i}Z_{k, \ell-1}^{k}$$ Therefore, we find that $Z_{k,\ell}$, number of stable configurations that can result from starting with $N_{k, \ell}$ labeled chips on the $k$-ary tree, is at least $$\sum_{i=0}^{\lfloor \frac{k}{2} \rfloor}\binom{(N_{k, \ell}-1) \cdot \frac{\lfloor \frac{k}{2} \rfloor}{k} -1-\lfloor \frac{k}{2}  \rfloor}{i} \binom{(N_{k, \ell}-1) \cdot \frac{\lceil \frac{k}{2} \rceil}{k} -1-2\lceil \frac{k}{2}  \rceil+i}{i}Z_{k, \ell-1}^{k}.$$
\end{proof}

\begin{example}
    In Figure~\ref{fig:LemmaConstruction}, we illustrate the construction in the proof of Lemma~\ref{lem:lowerbound} on the binary tree with $N_{2, 4}=15$ labeled chips. We first pick uneasy chip $c_1 = 3$, then fire tuple of chips $(1, 2, 3)$ from the root. Then we pick $c_1'=13$. We fire $(13, 14, 15)$ from the root. Firing symmetrically, we obtain a configuration where $8$ is at the root, chips $1,2,4,5,6,7,13$ are on the left child of the root, and chips $3,9,10,11,12,14, 15$ are on the right child of the root. We keep firing symmetrically to obtain the stable configuration at the end of the flowchart.

    \begin{figure}[H]
    \centering
    \includegraphics[width=0.8\linewidth]{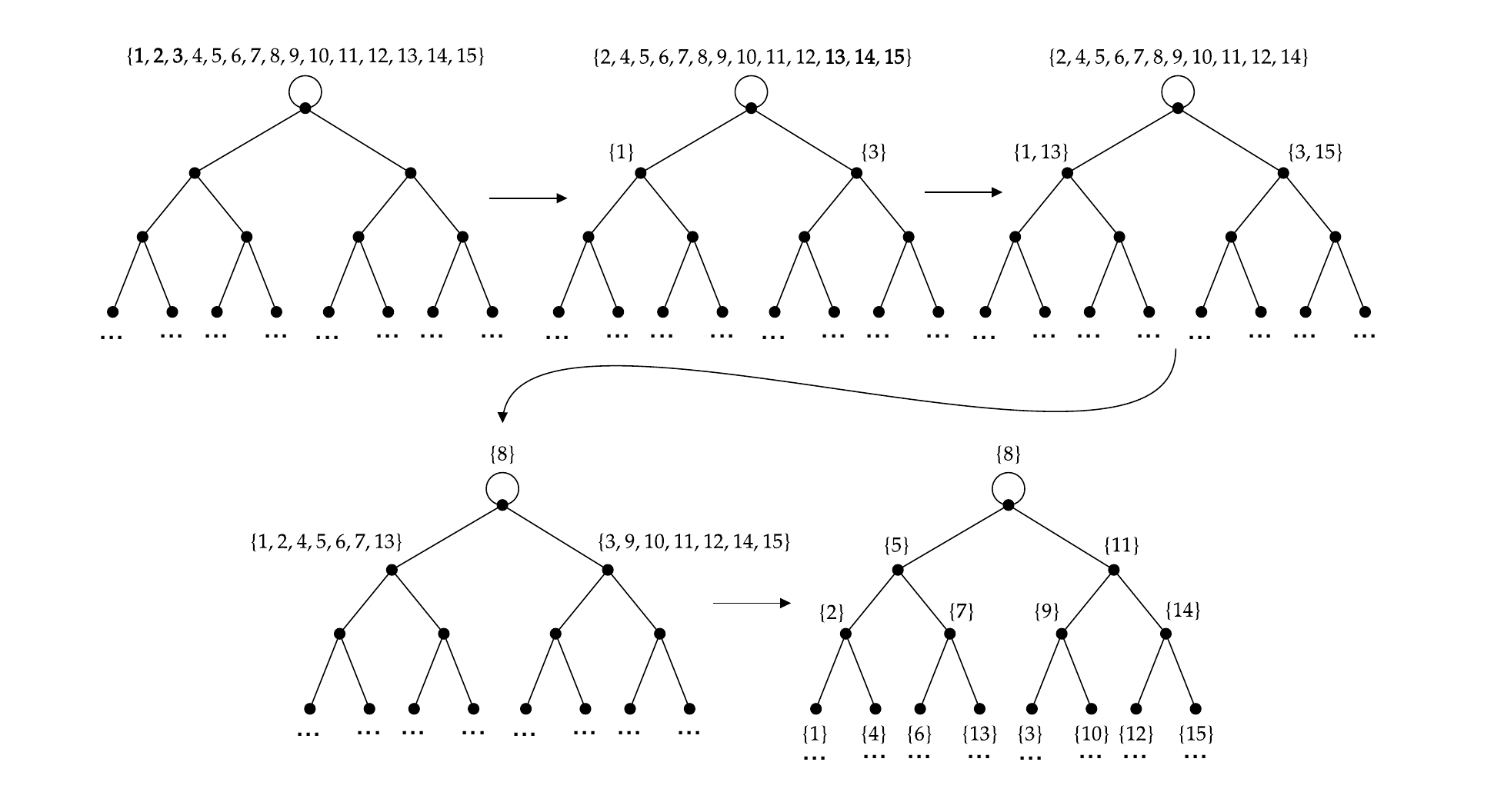}
    \caption{The construction from the proof of Lemma~\ref{lem:lowerbound} applied on binary tree with $N_{2, 4}$ labeled chips.}
    \label{fig:LemmaConstruction}
\end{figure}
\end{example}

We now use this recursive bound to find a closed-form expression that bounds $Z_{k, \ell}$ from below.

\begin{theorem}\label{thm:LowerBoundClosedForm2} For $\ell \geq 3$,
    \[Z_{2, \ell} \geq 6^{2^{\ell-3}}\prod_{j=4}^{\ell}\left(1 + \left(\frac{N_{2, j}-1}{2}-2\right)^2\right)^{2^{\ell-j}}.\]
\end{theorem}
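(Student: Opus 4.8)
The plan is to prove the closed-form lower bound by specializing the recursive bound of Lemma~\ref{lem:lowerbound} to the case $k=2$ and then unrolling the recursion via induction on $\ell$. First I would compute what the recursion in Lemma~\ref{lem:lowerbound} becomes when $k=2$. Here $\lfloor k/2 \rfloor = \lceil k/2 \rceil = 1$, so the sum ranges over $i \in \{0, 1\}$, and each factor $\frac{\lfloor k/2\rfloor}{k} = \frac{\lceil k/2\rceil}{k} = \frac{1}{2}$. Substituting these values, the two binomial coefficients become $\binom{\frac{N_{2,\ell}-1}{2}-2}{i}$ and $\binom{\frac{N_{2,\ell}-1}{2}-3+i}{i}$. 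Evaluating the $i=0$ term gives $1$ and the $i=1$ term gives $\left(\frac{N_{2,\ell}-1}{2}-2\right)\left(\frac{N_{2,\ell}-1}{2}-2\right) = \left(\frac{N_{2,\ell}-1}{2}-2\right)^2$, so that the sum collapses to the clean factor $1 + \left(\frac{N_{2,\ell}-1}{2}-2\right)^2$. This yields the specialized recursion
\[
Z_{2,\ell} \geq \left(1 + \left(\tfrac{N_{2,\ell}-1}{2}-2\right)^2\right) Z_{2,\ell-1}^{2}.
\]

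Next I would set up the induction on $\ell$. For the base case I would take $\ell = 3$ and verify that $Z_{2,3} \geq 6$; this should follow from the recursion applied at $\ell = 3$ together with the known small-case values (one can use that the $i=0$ and $i=1$ terms of Lemma~\ref{lem:lowerbound} at $\ell=3$ already produce a sum of constructions, and $Z_{2,2} = 1$ from Lemma~\ref{lem:small}), giving the needed constant $6^{2^{3-3}} = 6$ with the empty product $\prod_{j=4}^{3}$ equal to $1$. For the inductive step, assuming the closed form holds for $\ell-1$, I would substitute the inductive hypothesis into the specialized recursion: the factor $Z_{2,\ell-1}^2$ squares both the constant $6^{2^{\ell-4}}$ into $6^{2^{\ell-3}}$ and each exponent $2^{\ell-1-j}$ in the product into $2^{\ell-j}$, while the new multiplicative factor $1 + \left(\frac{N_{2,\ell}-1}{2}-2\right)^2$ supplies exactly the $j=\ell$ term of the target product with exponent $2^{\ell-\ell} = 1$. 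Reassembling these gives precisely the claimed expression.

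The main obstacle I anticipate is pinning down the base case cleanly and confirming that the collapse of the binomial sum is exactly $1 + \left(\frac{N_{2,\ell}-1}{2}-2\right)^2$ with no off-by-one errors in the arguments of the binomial coefficients; the floor/ceiling bookkeeping in Lemma~\ref{lem:lowerbound} is delicate and the substitution $k=2$ must be carried out carefully so that the upper arguments $\frac{N_{2,\ell}-1}{2}-2$ and $\frac{N_{2,\ell}-1}{2}-3+i$ match. Once the recursion is verified in this clean quadratic form, the induction itself is routine exponent-arithmetic: the key identities are $2 \cdot 2^{\ell-4} = 2^{\ell-3}$ and $2 \cdot 2^{\ell-1-j} = 2^{\ell-j}$, together with the reindexing that turns the product $\prod_{j=4}^{\ell-1}$ (squared, from the inductive hypothesis) plus the single new factor into $\prod_{j=4}^{\ell}$. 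I would present the specialized recursion as a displayed intermediate claim, then do the induction in two short paragraphs, keeping the exponent bookkeeping explicit but not belaboring the arithmetic.
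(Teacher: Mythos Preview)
Your overall plan---specialize Lemma~\ref{lem:lowerbound} to $k=2$, obtain the clean recursion $Z_{2,\ell}\ge\bigl(1+(\tfrac{N_{2,\ell}-1}{2}-2)^2\bigr)Z_{2,\ell-1}^2$, and then induct on $\ell$---is exactly what the paper does, and your inductive step is correct.

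The gap is in the base case. You propose to get $Z_{2,3}\ge 6$ by applying the recursion at $\ell=3$ with $Z_{2,2}=1$, but this does not work: since $N_{2,3}=7$ we have $\tfrac{N_{2,3}-1}{2}-2=1$, so the recursion yields only
\[
Z_{2,3}\ \ge\ \bigl(1+1^2\bigr)\cdot Z_{2,2}^2\ =\ 2,
\]
not $6$. (Equivalently, the $i=0$ and $i=1$ terms of Lemma~\ref{lem:lowerbound} at $k=2$, $\ell=3$ are both equal to $1$.) The paper handles the base case differently: it invokes the \emph{exact} value $Z_{2,3}=6$, computed by Musiker and Nguyen in Section~7 of~\cite{MR4827886}. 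That external input is essential---Lemma~\ref{lem:lowerbound} alone is too weak to recover the constant $6$---so you should cite it rather than try to derive it from the recursion.
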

\begin{proof}
We prove the above statement via proof by induction over $\ell$.

We first establish the base case of theorem. In Section 7 of \cite{MR4827886}, Musiker and Nguyen find that $Z_{2, 3} = 6$. Therefore $Z_{2, 3} \geq 6^{2^{3-3}}\prod_{j=4}^{3}\left(1+\left(\frac{N_{2, j}-1}{2}-2\right)^2\right)^{2^{\ell-j}}=6$.

Next, for the inductive step, assume that for $\ell \geq 3$, $$Z_{2, \ell} \geq 6^{2^{\ell-3}}\prod_{j=4}^{\ell}\left(1 + \left(\frac{N_{2, j}-1}{2}-2\right)^2\right)^{2^{\ell-j}}.$$

From, Lemma~\ref{lem:lowerbound}, we find that \begin{equation*}
    \begin{split}
        Z_{2, \ell}\geq \sum_{i=0}^{1}\binom{(N_{k, \ell}-1) \cdot \frac{1}{2} -2}{i} \binom{(N_{2, \ell+1}-1) \cdot \frac{1}{2} -1-2+i}{i}Z_{2, \ell}^{2} \\ \geq \left(1 + \left(\frac{N_{2, \ell+1}-1}{2}-2\right)^2\right)\left(6^{2^{\ell-3}}\prod_{j=4}^{\ell}\left(1 + \left(\frac{N_{2, j}-1}{2}-2\right)^2\right)^{2^{\ell-j}} \right)^2 \\ \geq 6^{2^{\ell-2}}\prod_{j=4}^{\ell+1}\left(1 + \left(\frac{N_{2, j}-1}{2}-2\right)^2\right)^{2^{\ell+1-j}}.
    \end{split}
\end{equation*} This completes the proof.
\end{proof}

\begin{example}
    In Table~\ref{tab:ComputationalLowerk2}, we calculate lower bounds for $Z_{2, 4}, Z_{2, 5}, Z_{2, 6}$ and $Z_{2, 7}$ using Theorem~\ref{thm:LowerBoundClosedForm2} and compare them to the known values and upper bounds of $Z_{2, \ell}$. Note that, in this table, the lower bound on $Z_{2, \ell}$ is orders of magnitude smaller than the upper bounds for $Z_{2, \ell}$ listed in Table \ref{tab:ComparingBounds}.
\begin{table}[H]
    \centering
    \begin{tabular}{|c|c|c|c|}
         \hline $\ell$ &  Lower Bound on $Z_{2, \ell}$  & Upper Bound on $Z_{2, \ell}$ From Thm. ~\ref{thm:BinaryIKL} & Upper Bound on $Z_{2, \ell}$ From Thm.~\ref{thm:GeneralZigzagBound} \\ \hline
         4 & $936$ & 693,000 & 18,018,000 \\ \hline
         5 & 148936320   & $\approx 2.9\times 10^{22}$ & $\approx 1.1 \times 10^{24}$ \\ \hline
         6 & $\approx 1.9 \times 10^{19}$ & $\approx 1.8\times 10^{65}$ & $\approx 2.5 \times 10^{67}$ \\ \hline 7 & $\approx 1.3 \times 10^{42}$ & $\approx 1.5 \times 10^{170}$ & $\approx 3.1 \times 10^{173}$ \\ \hline    \end{tabular}
    \caption{Lower and Upper bounds for $Z_{2, \ell}$ computed for $\ell=4, 5, 6, 7$}
    \label{tab:ComputationalLowerk2}
\end{table}
\end{example}

For general $k \geq 2$, we have the following lower bound ond on 
\begin{theorem}\label{thm:LowerBoundClosedFormk}

For $\ell \ge 3$ and $k \geq 2$, we have: \[Z_{k,\ell} \ge \prod_{j=3}^{\ell} \left(\sum_{i=0}^{\lfloor \frac{k}{2} \rfloor}\binom{(N_{k, \ell}-1) \cdot \frac{\lfloor \frac{k}{2} \rfloor}{k} -1-\lfloor \frac{k}{2}  \rfloor}{i} \binom{(N_{k, \ell}-1) \cdot \frac{\lceil \frac{k}{2} \rceil}{k} -1-2\lceil \frac{k}{2}  \rceil+i}{i} \right)^{k^{\ell-j}}.\]

\end{theorem}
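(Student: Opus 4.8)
The plan is a direct unrolling of the recursion in Lemma~\ref{lem:lowerbound} by induction on $\ell$, anchored by the base values supplied by Lemma~\ref{lem:small}. To streamline the bookkeeping I would abbreviate the bracketed sum appearing at level $j$ as
\[
S_{k,j} := \sum_{i=0}^{\lfloor k/2 \rfloor}\binom{(N_{k, j}-1) \cdot \frac{\lfloor k/2 \rfloor}{k} -1-\lfloor k/2  \rfloor}{i} \binom{(N_{k, j}-1) \cdot \frac{\lceil k/2 \rceil}{k} -1-2\lceil k/2  \rceil+i}{i},
\]
so that Lemma~\ref{lem:lowerbound}, applied at level $j$, reads simply $Z_{k,j} \ge S_{k,j}\, Z_{k,j-1}^{\,k}$ (here the inner index is $j$, matching the summand of the lemma at that level). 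The target inequality then becomes the compact assertion $Z_{k,\ell} \ge \prod_{j=3}^{\ell} S_{k,j}^{\,k^{\ell-j}}$.

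For the base case $\ell = 3$, I would invoke Lemma~\ref{lem:lowerbound} with $\ell=3$ to obtain $Z_{k,3} \ge S_{k,3}\, Z_{k,2}^{\,k}$, and then use $Z_{k,2} = 1$ from Lemma~\ref{lem:small} to conclude $Z_{k,3} \ge S_{k,3}$. This is precisely the claimed bound, since the product over the single index $j=3$ contributes the lone factor $S_{k,3}^{\,k^{0}} = S_{k,3}$.

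For the inductive step, assuming $Z_{k,\ell} \ge \prod_{j=3}^{\ell} S_{k,j}^{\,k^{\ell-j}}$, I would apply the recursion once more and raise the induction hypothesis to the $k$th power:
\[
Z_{k,\ell+1} \ge S_{k,\ell+1}\, Z_{k,\ell}^{\,k} \ge S_{k,\ell+1} \left(\prod_{j=3}^{\ell} S_{k,j}^{\,k^{\ell-j}}\right)^{\!k} = S_{k,\ell+1} \prod_{j=3}^{\ell} S_{k,j}^{\,k^{\ell+1-j}} = \prod_{j=3}^{\ell+1} S_{k,j}^{\,k^{\ell+1-j}},
\]
where the final equality absorbs $S_{k,\ell+1}=S_{k,\ell+1}^{\,k^{0}}$ into the product. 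This closes the induction and yields the theorem.

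Since each step is merely an invocation of an already-established inequality together with elementary manipulation of exponents, I do not anticipate a genuine obstacle. The only points requiring care are the correct use of the base value $Z_{k,2}=1$ (rather than attempting to descend to $\ell=1$) and the verification that the exponents transform correctly, namely $k \cdot k^{\ell-j} = k^{\ell+1-j}$, when the hypothesis is raised to the $k$th power.
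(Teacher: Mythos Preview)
Your proposal is correct and is essentially identical to the paper's own proof: the paper likewise abbreviates the inner sum (calling it $\eta_{k,j}$ rather than your $S_{k,j}$), establishes the base case $Z_{k,3}\ge\eta_{k,3}$ via Lemma~\ref{lem:lowerbound} together with $Z_{k,2}=1$ from Lemma~\ref{lem:small}, and then unrolls the recursion $Z_{k,\ell+1}\ge\eta_{k,\ell+1}Z_{k,\ell}^{k}$ by induction exactly as you describe. Your parenthetical remark that the summand at level $j$ should carry $N_{k,j}$ (not $N_{k,\ell}$) matches what the paper actually uses in its proof.
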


\begin{proof}

Let $\eta_{k, \ell}$ be shorthand for $$\eta_{k, \ell} =\sum_{i=0}^{\lfloor \frac{k}{2} \rfloor}\binom{(N_{k, \ell}-1) \cdot \frac{\lfloor \frac{k}{2} \rfloor}{k} -1-\lfloor \frac{k}{2}  \rfloor}{i} \binom{(N_{k, \ell}-1) \cdot \frac{\lceil \frac{k}{2} \rceil}{k} -1-2\lceil \frac{k}{2}  \rceil+i}{i}.$$

We prove by induction. First, notice that by Lemma~\ref{lem:lowerbound} and Lemma~\ref{lem:small}, we have \begin{align*}
Z_{k, 3} & \ \ge \ \eta_{k, 3} Z^k_{k, 2} = \prod_{j=3}^{3} \eta_{k, j} \\ 
& \ = \prod_{j=3}^{3} \left(\sum_{i=0}^{\lfloor \frac{k}{2} \rfloor}\binom{(N_{k, j}-1) \cdot \frac{\lfloor \frac{k}{2} \rfloor}{k} -1-\lfloor \frac{k}{2}  \rfloor}{i} \binom{(N_{k, j}-1) \cdot \frac{\lceil \frac{k}{2} \rceil}{k} -1-2\lceil \frac{k}{2}  \rceil+i}{i}\right)^{k^{3-j}}.
\end{align*}

Next, for the inductive step, assume that $$Z_{k,\ell} \ge \prod_{j=3}^{\ell} \eta_{k, j}^{k^{\ell-j}}.$$

By Lemma~\ref{lem:lowerbound}, we know that 
\begin{align*}
Z_{k, \ell+1} & \ge \eta_{k, \ell+1} Z^k_{k, \ell}  \ge \eta_{k, \ell+1}\left(\prod_{j=3}^{\ell}\eta_{k, j}^{k^{\ell-j}}\right)^k  = \prod_{j=3}^{\ell+1}\eta_{k, j}^{k^{\ell+1-j}}
\end{align*}

This proves the inductive step of the proof.
\end{proof}

\begin{example}
    For $k=2$, we find that $\eta_{2, 3}$, as defined in the proof of Theorem~\ref{thm:LowerBoundClosedFormk}, is $2$. Therefore, the Theorem yields lower bound $Z_{2, \ell} \geq 2^{\ell-3}\prod_{j=4}^{\ell}\left(1 + (\frac{N_{2, j}-1}{2}-2)^2\right)^{k^{\ell-j}}$. Thus, the bound for $Z_{2, \ell}$ from Theorem~\ref{thm:LowerBoundClosedFormk} is weaker than the bound from Theorem~\ref{thm:LowerBoundClosedForm2}.
\end{example}

\begin{example}
    Table~\ref{tab:ComputationalLowerk4} presents lower and upper bounds for $Z_{4, 3}, Z_{4, 4}, Z_{4, 5}$. We computed the lower bounds using Theorem~\ref{thm:LowerBoundClosedFormk} and upper bounds using Theorem~\ref{thm:GeneralZigzagBound}.
    
    Note that the lower bounds for $Z_{4, 3}, Z_{4, 4}, Z_{4, 5}$ are many orders of magnitude smaller than their respective upper bounds. Thus, a natural open question is whether there are tighter upper and lower bounds on the number of stable configurations $Z_{k, \ell}$ of the $k$-ary tree with $N_{k, \ell}$ labeled chips.
\begin{table}[H]
    \centering
    \begin{tabular}{|c|c|c|}
         \hline $\ell$ &  Lower bound on $Z_{4, \ell}$ & Upper Bound on $Z_{4, \ell}$ From Thm.~\ref{thm:GeneralZigzagBound}\\ \hline
         $3$ & 484 &  3167841156480 \\\hline $4$ & $\approx 3.02\times 10^{16}$  & $\approx 3.2146  \times 10^{99}$ \\ \hline $5$ & $\approx 1.6 \times 10^{74}$ & $\approx 1.9761 \times 10^{601}$\\ \hline
    \end{tabular}
    \caption{Lower and upper bounds for $Z_{4, \ell}$ computed for $\ell=3, 4, 5$}
    \label{tab:ComputationalLowerk4}
\end{table}
\end{example}

\section{Further Directions}\label{sec:Further}
In this section, we discuss conjectures and further questions related to the properties and numbers of stable configurations of labeled chips in the $k$-ary tree.
\subsection{Ballot Property}

In \cite{inagaki2024chipfiringundirectedbinarytrees}, the first author, Khovanova, and Luo conjectured that stable configurations of $2^{\ell}-1$ labeled chips on a binary tree exhibit the \emph{ballot property}: for every vertex \(v\) and integer \(i\), the $i$th smallest chip to the left of \(v\) is less than the $i$th smallest chip to the right of $v$. This resembles ballot sequences counted by Catalan numbers \cite{MR3467982}. Assuming this property, the authors established a bound on the number of stable configurations. They confirmed it for $\ell=3$ and $\ell=4$, but the validity for higher $\ell$ remains uncertain.

In light of this, we conjecture the following generalized ballot property:
\begin{conjecture}
    Consider any stable configuration resulting from $N_{k, \ell}$ labeled chips starting at the root of the $k$-ary tree. Then for each vertex $v$, $i\in \{1,2, \dots, N_{k, \ell-1}\}$ and $a, b \in \{1,2, \dots, k\}$ such that $a < b$, the $i$th smallest chip that is in the subtree rooted at the $a$th leftmost child of $v$ is less than the $i$th smallest chip in the subtree rooted at the $b$th leftmost child of $v$.
\end{conjecture}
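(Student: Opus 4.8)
The plan is to prove the conjecture by strong induction on the number of layers $\ell$, reducing at each stage to the comparison among the $k$ subtrees hanging off a single vertex. Because the firing rule, and hence every reachable stable configuration, depends only on the relative order of the chip labels, it suffices to prove the statement for $v$ equal to the root: once the $k$ first-layer subtrees are shown to satisfy the claimed domination, each such subtree is itself an instance of the process on $\ell-1$ layers (with an order-isomorphic chip set), so the inductive hypothesis supplies the property for every deeper vertex. Thus the entire content is the top-level claim that for $a<b$ and every $i$, the $i$th smallest chip in the subtree rooted at the $a$th child of the root is less than the $i$th smallest chip in the subtree rooted at the $b$th child.

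To handle all $i$ at once I would fix an arbitrary threshold $x$ and $2$-color the chips, calling a chip \emph{small} if its label is at most $x$ and \emph{large} otherwise. By Theorem~\ref{endingconfig} all $k$ subtrees of the root carry the same number $N_{k,\ell-1}$ of chips in the stable configuration, so if $s_a(C)$ denotes the number of small chips in the $a$th subtree, the order-statistic inequalities of the conjecture are equivalent to the single family of count inequalities $s_a \ge s_b$ for all $a<b$, required in every stable configuration and for every threshold $x$ (strictness following from distinctness of labels). The reduction makes the firing rule transparent: among any $k+1$ selected chips the small ones occupy the lowest ranks, so after the median chip is sent to the parent the survivors are dealt out with the smalls going to the lowest-indexed children. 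Consequently, \emph{when the root itself fires}, the chip deposited into subtree $a$ is no larger than the chip deposited into subtree $b$ whenever $a<b$, and the quantity $s_a - s_b$ can only increase. This is the ``good'' direction, matching the left-pushing monotonicity already exploited in Proposition~\ref{prop:smallestchip} and Proposition~\ref{zigzagrelation}.

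The main obstacle is that $s_a - s_b$ is \emph{not} monotone under all legal firings: when the $a$th child of the root fires, it sends its $\lceil (k+1)/2\rceil$th-smallest chip up to the root, and if that chip is small then $s_a$ drops by one. Since global confluence fails (Example~\ref{ex:nonconfluence}), we cannot simply select a convenient firing order, and these upward motions can be interleaved adversarially with the root's firings. This is exactly where a naive monovariant breaks down, and I expect it to be the crux of the argument. I would attack it by passing to the Endgame: by the Endgame Confluence corollary the final stable configuration is a deterministic function of the configuration present when $(0,\ell-2)$ first occurs, at which moment layers $1$ through $\ell-1$ are saturated and layer $\ell$ is empty. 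The strategy is then twofold. First, show that the explicit wave dynamics of the Endgame (Theorem~\ref{thm:Endgame}), which repeatedly push smaller chips to the left, carry any start-of-Endgame configuration satisfying a suitable \emph{truncated} domination to a stable configuration satisfying the full ballot property. Second, show that this truncated domination is already forced at the start of the Endgame, irrespective of the earlier non-confluent firings.

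The second step is the genuinely hard one, as it again concerns a non-confluent phase, and I anticipate it will require strengthening the inductive hypothesis into an invariant that tracks not only $s_a$ but also the small chips currently \emph{in transit} toward the root, so that each decrease caused by a child firing its median chip upward is provably compensated by a later deposit back into the correct subtree. Verifying that such a refined invariant is preserved by every legal firing — in particular doing the bookkeeping at the moment a vertex fires a small median chip up to its parent — is the step I expect to consume most of the effort. As a sanity check and to anchor the induction, I would first reprove the known cases $\ell=3$ and $\ell=4$ for the binary tree inside this framework, and verify the base case $\ell=3$ for general $k$ directly from the single-vertex firing analysis underlying Lemma~\ref{lem:small}, before attempting the general inductive step.
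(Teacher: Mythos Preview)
This statement is a \emph{conjecture} in the paper (the generalized ballot property in Section~\ref{sec:Further}); the paper offers no proof of it, so there is nothing to compare your attempt against. Your proposal is therefore an attack on an open problem, and it has two genuine gaps.

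First, the inductive reduction to $v=\text{root}$ is not justified. You assert that ``each such subtree is itself an instance of the process on $\ell-1$ layers,'' but the subtree rooted at a child of the root is \emph{not} an independent labeled chip-firing game: its root has no self-loop, and throughout the process it exchanges chips with the global root. The paper's lower-bound construction (Lemma~\ref{lem:lowerbound}) only achieves this decoupling under a very specific symmetric firing schedule, and in general the set of stable configurations that can appear in a depth-$1$ subtree need not coincide with (or even be contained in) the set $Z_{k,\ell-1}$ of stable configurations of the stand-alone game. So even if you established the root-level domination for all $\ell$, the inductive hypothesis does not automatically propagate to deeper vertices.

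Second, you correctly isolate the real obstruction---that $s_a-s_b$ is not monotone because a child's firing can send a small chip upward---but then defer the entire difficulty to an unspecified ``refined invariant'' tracking small chips in transit, to be verified later. That is precisely the content of the conjecture: the pre-Endgame phase is non-confluent, and nothing in the Endgame machinery (Theorem~\ref{thm:Endgame} and its corollary) constrains \emph{which} chips sit on layers $1$ through $\ell-1$ when the Endgame begins. Your two-coloring reformulation is clean and the Endgame reduction is a reasonable first move, but as written the proposal is a strategy outline that names the hard step without supplying any mechanism to carry it out.
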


We believe that if this conjecture is true, we can find another upper bound on the number of configurations of $N_{k, \ell}$ labeled chips on the $k$-ary tree; it would be proven similarly to Theorem 3.19 of \cite{inagaki2024chipfiringundirectedbinarytrees}.

\subsection{The Stable Configurations as a Permutation of $1, 2, \dots, N_{k, \ell}$}
As noted in Musiker and Nguyen, one can view the stable configuration of $N_{k, \ell}$ chips at the root as a stable configuration. More specifically, they considered listing all of the chips in the stable configuration of the binary tree from left to right.  Now, consider doing this for $k$-ary trees; for odd $k$ and for each vertex $v$ of the tree, we say that the $\lceil \frac{k}{2}\rceil$nd leftmost child of $v$ is to the right of the $v$ and if vertex $v$ is in a subtree whose root is left of that of a subtree containing $u$ then $v$ is left of $u$. This is better explained via illustration. We call this permutation the flattened configuration.

\begin{example}
    Consider the stable configuration of chips in Figure~\ref{fig:endgameexample}. We read it as the permutation $1,2, 3, 4, 9, 5, 6, 7, 8, 10, 12, 14, 15, 16, 17, 13, 18, 19, 20, 21$ of $1,2, \dots, 21.$
\end{example}

For $k=2$ and $n=4$, Patrick Liscio \cite{liscio} lists out all configurations as permutations derived from the flattened configuration with the number of inversions, i.e., the number of pairs of indices $i, j$ such that both $i < j$ and the $i$th term of the flattened configuration is greater than the $j$th. Viewing the configurations as permutations, we ask the following questions:
\begin{itemize}
    \item What is the maximum number of inversions that can appear in a flattened configuration of $N_{k, \ell}$ chips?
    \item What permutations and patterns of the flattened configuration are avoided or can show up?
\end{itemize}
In \cite{MR4887467}, these questions were addressed in the context of labeled chip firing on \emph{directed} $k$-ary trees. 

From Liscio's data \cite{liscio}, 25 is the largest number of inversions that appear in a stable configuration of $2^4-1$ labeled chips on a binary tree. For $n \geq 5$, the largest number of inversions in a stable configuration of \(2^{\ell}-1\) remains unknown due to computational difficulties of the task \cite{MR4827886}.

To address the first question, one can almost immediately observe that the maximum number of inversions that can appear is less than $\frac{N_{k, \ell}(N_{k, \ell}-1)}{2}$. This is because the permutation $N_{k, \ell}, N_{k, \ell}-1, \dots, 1$ cannot be a flattened configuration of any stable configuration; indeed, chip $1$ must be the straight left bottom descendant of the root and chip $N_{k, \ell}$ must be the straight right bottom descendant of the root by Proposition~\ref{prop:smallestchip}. 

To address the second question, we believe that relations like Theorem~\ref{zigzagrelation}, which show relative positions of chips based on numerical order, can reveal patterns. Given the limited literature on viewing undirected $k$-ary trees as permutations, Musiker and Nguyen's results (e.g., Proposition~\ref{4.4}) may be a good starting point for exploration. 

\section{Acknowledgements}

We thank Tanya Khovanova for her numerous helpful comments, insightful conversations, and proofreading.  We would like to extend our gratitude to Prof. Jonathan Bloom, Prof. Roman Bezrukavnikov, and AnaMaria Perez, who oversaw the progress of the research problem insightful comments. We also thank Professor Alexander Postnikov for useful discussions.
 
This project was started in the 2025 Research Science Institute (RSI) program, run by the Center for Excellence in Education (CEE) and hosted by the Massachusetts Institute of Technology (MIT). The MIT Department of Mathematics supports the first author. Citadel Securities sponsored the second author during RSI.

Figures were created with the help of TikZ and Mathcha.
\bibliographystyle{plain}

\begin{thebibliography}{10}

\bibitem{agrawal2025chipfiringinfinitekarytrees}
Dillan Agrawal, Selena Ge, Jate Greene, Tanya Khovanova, Dohun Kim, Rajarshi Mandal, Tanish Parida, Anirudh Pulugurtha, Gordon Redwine, Soham Samanta, and Albert Xu.
\newblock Chip-firing on infinite $k$-ary trees, 2025.

\bibitem{anderson1989disks}
Richard Anderson, L{\'a}szl{\'o} Lov{\'a}sz, Peter Shor, Joel Spencer, {\'E}va Tardos, and Shmuel Winograd.
\newblock Disks, balls, and walls: analysis of a combinatorial game.
\newblock {\em The American Mathematical Monthly}, 96(6):481--493, 1989.

\bibitem{bak1987self}
Per Bak, Chao Tang, and Kurt Wiesenfeld.
\newblock Self-organized criticality: An explanation of the 1/f noise.
\newblock {\em Physical Review Letters}, 59(4):381, 1987.

\bibitem{MR3144399}
Andrew Berget.
\newblock Critical groups of graphs with reflective symmetry.
\newblock {\em J. Algebraic Combin.}, 39(1):209--224, 2014.

\bibitem{bjorner1991chip}
Anders Bj{\"o}rner, L{\'a}szl{\'o} Lov{\'a}sz, and Peter~W Shor.
\newblock Chip-firing games on graphs.
\newblock {\em European Journal of Combinatorics}, 12(4):283--291, 1991.

\bibitem{dhar1990self}
Deepak Dhar.
\newblock Self-organized critical state of sandpile automaton models.
\newblock {\em Physical Review Letters}, 64(14):1613, 1990.

\bibitem{dhar1999abelian}
Deepak Dhar.
\newblock The {A}belian sandpile and related models.
\newblock {\em Physica A: Statistical Mechanics and its applications}, 263(1-4):4--25, 1999.

\bibitem{zbMATH06585703}
Johnny Guzm{\'a}n and Caroline Klivans.
\newblock Chip firing on general invertible matrices.
\newblock {\em SIAM J. Discrete Math.}, 30(2):1115--1127, 2016.

\bibitem{Hopkins_2017}
Sam Hopkins, Thomas McConville, and James Propp.
\newblock Sorting via chip-firing.
\newblock {\em The Electronic Journal of Combinatorics}, 24(3), 2017.

\bibitem{MR4887467}
Ryota Inagaki, Tanya Khovanova, and Austin Luo.
\newblock Chip firing on directed {$k$}-ary trees.
\newblock {\em Enumer. Comb. Appl.}, 5(2):Paper No. S2R16, 14, 2025.

\bibitem{inagaki2024chipfiringundirectedbinarytrees}
Ryota Inagaki, Tanya Khovanova, and Austin Luo.
\newblock On chip-firing on undirected binary trees.
\newblock {\em Annals of Combinatorics}, Aug 2025.

\bibitem{klivans2018mathematics}
Caroline~J Klivans.
\newblock {\em The Mathematics of Chip-firing}.
\newblock Chapman and Hall/CRC, 2018.

\bibitem{liscio}
Patrick Liscio.
\newblock Liscio’s result.txt.
\newblock Available at \url{{https://drive.google.com/file/d/1b_QkfDahUJuDoF7YhMf_oCU6qNcSY_Sj/view}}.

\bibitem{MR4827886}
Gregg Musiker and Son Nguyen.
\newblock Labeled chip-firing on binary trees with {$2^n-1$} chips.
\newblock {\em Ann. Comb.}, 28(4):1167--1197, 2024.

\bibitem{spencer1986balancing}
Joel Spencer.
\newblock Balancing vectors in the max norm.
\newblock {\em Combinatorica}, 6(1):55--65, 1986.

\bibitem{MR3467982}
Richard~P. Stanley.
\newblock {\em Catalan numbers}.
\newblock Cambridge University Press, New York, 2015.

\end{thebibliography}

\smallskip

\noindent
Ryota Inagaki \\
\textsc{
Department of Mathematics, Massachusetts Institute of Technology\\
77 Massachusetts Avenue, Building 2, Cambridge, Massachusetts, U.S.A. 02139}\\
\textit{E-mail address: }\texttt{inaga270@mit.edu}
\medskip

\noindent
Aaron Lin \\
\textsc{
Ladue Horton Watkins High School,\\
1201 South Warson Road, St. Louis, MO 63124}\\
\textit{E-mail address: }\texttt{aaronlin0924@gmail.com}
\medskip

\end{document}